\DeclareSymbolFontAlphabet{\amsmathbb}{AMSb}%
\definecolor{caribbeangreen}{rgb}{0.0, 0.8, 0.6}
\setlist{topsep=0ex,itemsep=1ex}
\newcommand{\Awf}{\mathcal{A}}
\newcommand{\Bwf}{\mathcal{B}}
\newcommand{\Cwf}{\mathcal{C}}
\newcommand{\Ewf}{\mathcal{E}}
\newcommand{\Iwf}{\mathcal{I}}
\newcommand{\Jwf}{\mathcal{J}}
\newcommand{\Kwf}{\mathcal{K}}
\newcommand{\Mwf}{\mathcal{M}}
\newcommand{\Nwf}{\mathcal{N}}
\newcommand{\Pwf}{\mathcal{P}}
\newcommand{\Vwf}{\mathcal{V}}
\newcommand{\SN}{\mathcal{SN}}
\newcommand{\Wwf}{\mathcal{W}}
\newcommand{\Rbf}{\mathbf{R}}
\newcommand{\bfrak}{\mathfrak{b}}
\newcommand{\cfrak}{\mathfrak{c}}
\newcommand{\dfrak}{\mathfrak{d}}
\newcommand{\pfrak}{\mathfrak{p}}
\newcommand{\sfrak}{\mathfrak{s}}
\newcommand{\ufrak}{\mathfrak{u}}
\newcommand{\zfrak}{\mathfrak{z}}
\newcommand{\rsub}{\subseteq^\restriction}
\newcommand{\nrsub}{\nsubseteq^\restriction}
\newcommand{\baire}{{}^\omega\omega}
\newcommand{\seq}[2]{\langle#1:#2\rangle}
 \newcommand{\cov}[1]{\mathtt{cov}\hspace{-1pt}\left(#1\right)}
\newcommand{\add}[1]{\mathtt{add}\hspace{-1pt}\left(#1\right)}
\newcommand{\non}[1]{\mathtt{non}\hspace{-1pt}\left(#1\right)}
\newcommand{\cof}[1]{\mathtt{cof}\hspace{-1pt}\left(#1\right)}
\newcommand{\cofJ}[2]{\mathtt{cof}^{#1}\hspace{-1pt}\left(#2\right)}
\newcommand{\addJ}[2]{\mathtt{add}^{#1}\hspace{-1pt}\left(#2\right)}
\newcommand{\Cof}{\mathrm{Cof}}
\newcommand{\id}{\mathrm{id}}
\newcommand{\cf}{\mathrm{cf}}
\newcommand{\RB}{\mathrm{RB}}
\newcommand{\R}{\amsmathbb{R}}
\newcommand{\abs}[1]{\left|#1\right|}
\newcommand{\leqT}{\preceq_{\mathrm{T}}}
\newcommand{\eqT}{\cong_{\mathrm{T}}}
\newcommand{\set}[2]{\{#1:#2\}}
\newcommand{\largeset}[2]{\left\{#1:#2\right\}}
\newcommand{\gen}[1]{\left\langle #1 \right\rangle}
\newcommand{\ED}{{\mathcal{ED}}}
\newcommand{\nwd}{\mathtt{nwd}}
\newcommand{\finfin}{\fin\times\fin}
\newcommand{\fino}{\fin\times\emptyset}
\newcommand{\ofin}{\emptyset\times\fin}
\newcommand{\sel}{\mathrm{Sel}}
\newcommand{\fin}{\text{\rm Fin}}
\newcommand{\pid}[1]{{\mathrm P}(#1)}
\newcommand{\strutt}{\rule[0pt]{0pt}{14pt}}
\title[Relative cofinality of ideals]{Relative cofinality of ideals}
\author{Adam Marton}
\address[Adam Marton]{Department of Applied Mathematics and Business Informatics, Faculty of Economics, Technical University of Košice, B. Němcovej 32, 040 01 Košice, Slovakia}
\email{adam.marton@tuke.sk}
\author{Miroslav Repický}
\address[Miroslav Repický]{Mathematical Institute, Slovak Academy of Sciences, Grešákova 6, 040~01 Košice, Slovakia}
\email{repicky@saske.sk}
\thanks{The first author was supported by the Slovak Research and Development Agency under Contract No.~APVV-20-0045.
The second autor was supported by the grant of the Slovak Grant Agency VEGA 2/0104/24 and
by the Slovak Research and Development Agency under Contract No.~APVV-20-0045.}
\subjclass[2020]{03E05, 03E15, 03E17, 03E35}
\keywords{ideal, cardinal invariants, cofinality, P-ideal}
\newcommand{\ZFC}{\textbf{\upshape ZFC}}
\newcommand{\cons}[1]{\mathrm{Cons}(\ZFC+#1)}
\begin{document}

\newcommand{\unit}{[0,1]}

\makeatletter
\def\@roman#1{\romannumeral #1}
\newcommand{\startlist}{\ \@beginparpenalty=10000}
\makeatother

\numberwithin{equation}{section}
\newcounter{enuAlph}
\renewcommand{\theenuAlph}{\Alph{enuAlph}}

\theoremstyle{plain}
  \newtheorem{theorem}{Theorem}[section]
  \newtheorem{corollary}[theorem]{Corollary}
  \newtheorem{lemma}[theorem]{Lemma}
  \newtheorem{mainlemma}[theorem]{Main Lemma}
  \newtheorem{fact}[theorem]{Fact}
  \newtheorem{prop}[theorem]{Proposition}
  \newtheorem{claim}[theorem]{Claim}
  \newtheorem{hopeth}[theorem]{Hopeful Theorem}
  \newtheorem*{theorem*}{Theorem}
  \newtheorem*{mainthm*}{Main Theorem}
  \newtheorem{teorema}[enuAlph]{Theorem}
  \newtheorem*{corollary*}{Corollary}
  \newtheorem{observation}[theorem]{Observation}
  \newtheorem{conjecture}[theorem]{Conjecture}
\theoremstyle{definition}
  \newtheorem{definition}[theorem]{Definition}
  \newtheorem{notation}[theorem]{Notation}
  \newtheorem{context}[theorem]{Context}
  \newtheorem{assumption}[theorem]{Assumption}
  \newtheorem*{definition*}{Definition}
  \newtheorem*{acknowledgements*}{Acknowledgements}
\theoremstyle{remark}
  \newtheorem{question}[theorem]{Question}
  \newtheorem{problem}[theorem]{Problem}
  \newtheorem{example}[theorem]{Example}
  \newtheorem{remark}[theorem]{Remark}

\def\sectionautorefname{Section}
\def\subsectionautorefname{Subsection}

\begin{abstract}
We introduce a~two-parameter modification of the cofinality invariant of ideals. This allows us to include the interaction of a~pair of ideals in the study of base-like structures. We find the values (cardinal numbers or well-known cardinal invariants) of the invariant for pairs of some critical ideals on $\omega$. We also dichotomously divide pairs of known ideals on the real line based on whether their relative cofinality is trivial or uncountable. Finally, we also study the relative cofinality of maximal ideals.
\end{abstract}

\maketitle

\section{Introduction}\label{sec:int}

Cofinality of an ideal on an infinite set $X$ is the only one among the four classical cardinal characteristics associated with ideals (additivity, covering, uniformity and cofinality) that is interesting even when the underlying set $X$ is countable -- the remaining characteristics become trivial.
In this paper we study the following modification of the cofinality of an ideal, that was originally introduced in \cite{tatra} for ideals on $\omega$.

\begin{definition}
Given ideals $\Iwf$, $\Jwf$ on an infinite set $X$ we define
\[
\cofJ{\Jwf}{\Iwf}=\min\set{\abs{\Awf}}{\Awf\subseteq\Iwf\ \land\ \forall I\in\Iwf\ \exists A\in\Awf\ I\subseteq^\Jwf A}
\]
where $A\subseteq^\Jwf B$ means $A\setminus B\in\Jwf$.
We define $\cofJ{*}{\Iwf}=\cofJ{[X]^{<\omega}}{\Iwf}$.%
\end{definition}

We will call this invariant the \textit{relative cofinality}.
While the relative cofinality can be thought as a~simple generalization of the standard cofinality or $\cofJ{*}{\Iwf}$ (see \cite{inv}), it depends strongly on both parameters.
Therefore, it is rather an invariant capturing the \textit{relationship} between ideals.
Moreover, this invariant is implicitly based on the so-called $\pid{\Jwf}$-property introduced by M.~Ma\v{c}aj and M.~Sleziak \cite{sleziak} and later studied or applied in~\cite{filipow, stanfil, stan, my, repickyqn, repickyqn2}, and is therefore very closely related to ideal convergence.
Indeed, consider the following generalization of the standard $\mathrm{P}$-property:

\begin{definition}[M. Ma\v{c}aj and M. Sleziak]
Let $\Jwf$ be an ideal on an infinite set $X$.
An~ideal $\Iwf$ on $X$ is called a~$\mathrm{P}(\Jwf)$-ideal
if for any sequence $\set{I_n}{n\in\omega}\subseteq\Iwf$ there is $I\in\Iwf$ such that $I_n\subseteq^\Jwf I$ for each $n\in\omega$.
\end{definition}

The definition of $\mathrm{P}(\Jwf)$-ideal is due to \cite{sleziak} but we use the notation from \cite{filipow}.
The relationship ``$\Iwf$ is a~$\pid{\Jwf}$-ideal'' is important for ideal convergence as it describes settings in which some types of ideal convergence are stronger than others.
For instance, Filip\'ow and Staniszewski showed that $\Iwf$-uniform convergence of sequences of real functions is stronger than $(\Iwf,\Jwf)$-quasi-normal convergence if and only if $\Iwf$ satisfies the $\pid{\Jwf}$-property, see \cite{filipow}.
This is an interesting observation, because in this case the relation ``$\Iwf$ is a~$\pid{\Jwf}$-ideal'' describes a~setting in which the relationships between the above-mentioned ideal types of convergence copy the behavior of the classical types of convergence, see \cite{bukovska}.

One can easily prove (see \cite{tatra}) that if $\Iwf$ is a~$\pid{\Jwf}$-ideal, the invariant describing smallest subfamilies of $\Iwf$ ensuring the $\pid{\Jwf}$-property is, in fact, equal to $\cofJ{\Jwf}{\Iwf}$.
Motivated by this fact, in \cite{tatra} the relative cofinality is studied by the first author for pairs of ideals on $\omega$ such that $\Iwf$ satisfies a~$\pid{\Jwf}$-property.
However, the study of the invariant $\cofJ{\Jwf}{\Iwf}$ makes sense not only for the pairs $\Iwf$, $\Jwf$ such that $\Iwf$ is a~$\pid{\Jwf}$-ideal, but for any pair of ideals on the same set $X$.
Thus, we aim to supplement the missing results in Table~2 of \cite{tatra}.
Our second goal is to shift the investigation of the invariant of relative cofinality to uncountable spaces, namely, the unit interval $[0,1]$.
The last goal is to describe the relative cofinality for maximal ideals.

The main results are organized as follows. In \Cref{SecRelativeCofinality} we improve the bounds for non-trivial relative cofinality (i.e., $>1$) found in \cite{tatra}. We find the following natural bounds for it
(\autoref{add<=cof}):
\[
\add{\Iwf}\longrightarrow\addJ{\Jwf}{\Iwf}\longrightarrow\cf(\cofJ{\Jwf}{\Iwf})\longrightarrow\cofJ{\Jwf}{\Iwf}\longrightarrow\cof{\Iwf}.
\]
Regarding the standard cardinal characteristics associated with an ideal~$\Iwf$, we show that the diagram
in \autoref{incRelCof} is complete (\autoref{diagCountEx}), i.e., we show that no arrow can be added to the standard diagram showing inequalities between classical cardinal characteristics associated with an ideal $\Iwf$ and the studied invariant in general.
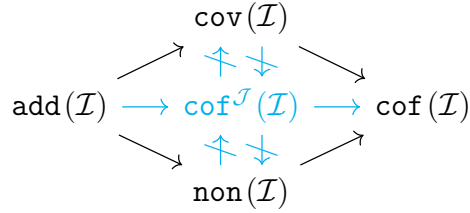
\begin{figure}[h]
$$
\begin{tikzcd}
&\cov{\Iwf}\arrow[dr]\arrow[d, "/" marking, shift left=1.5ex,cyan]\\
\add{\Iwf}\arrow[ur]\arrow[dr]\arrow[r, cyan]&\color{cyan} \cofJ{\Jwf}{\Iwf}\arrow[r, cyan]\arrow[u,"/" marking,shift left=1.5ex,cyan]\arrow[d, "/" marking,shift left=1.5ex,cyan]&\cof{\Iwf}\\
&\non{\Iwf}\arrow[ur]\arrow[u,"/" marking,shift left=1.5ex,cyan]
\end{tikzcd}
$$
\caption{Relationships between the relative cofinality and the classical cardinal characteristics associated with $\Iwf$.
An arrow $\to$ means $\leq$.}
\label{incRelCof}
\end{figure}

In \Cref{SecIdealsOnOmega}, we compute the values missing in \cite[Table~2]{tatra} (\autoref{tablecompletion}), i.e., we complete the list of values of the relative cofinality for pairs of the critical ideals on $\omega$ studied in~\cite{tatra}.
In \autoref{completeTablecofJ}, the contribution of this paper is marked with cyan color, the remaining results are due to \cite{tatra}.
\begin{table}[ht]
$$
\begin{array}{|c|c|c|c|c|c|c|}
\hline\strutt
\big\downarrow\Iwf\ \setminus\overrightarrow{\ \Jwf\ }&\fin &\fino&\ofin&\finfin&\sel&\ED\\[2pt]
\hline\strutt
\fin&1&1&1&1&1&1\\
\fino&{\color{cyan} \omega}&1&{\color{cyan}\omega}&1&{\color{cyan}\omega}&1\\
\ofin&\dfrak&\dfrak&1&1&\dfrak&\dfrak\\
\finfin&{\color{cyan}\dfrak}&\dfrak&{\color{cyan}\omega} &1&{\color{cyan}\dfrak}&\dfrak\\
\sel&{\color{cyan}\cfrak}&{\color{cyan}\cfrak}&1&1&1&1\\
\ED&{\color{cyan}\cfrak}&{\color{cyan}\cfrak}&{\color{cyan}\omega}&1&{\color{cyan}\omega}&1\\[2pt]
\hline
\end{array}
$$
\caption{Cardinality $\cofJ{\Jwf}{\Iwf}$ for all pairs of ideals on $\omega\times\omega$ considered in this work.}
\label{completeTablecofJ}
\end{table}

\Cref{SecIdealsOnUnit} is devoted to the ideals $\Nwf$, $\Mwf$, $\nwd$, $\Ewf$, $\SN$ on $[0,1]$.
We show in which cases $\cofJ{\Jwf}{\Iwf}$ has a~non-trivial value.
We prove the dichotomy shown in \autoref{table2}.
\begin{table}[ht]
$$
\newcommand{\0}{\phantom{>{}}}
\begin{array}{|c|c|c|c|c|c|}
\hline\strutt
\big\downarrow\Iwf\ \setminus\overrightarrow{\ \Jwf\ }&\Nwf &\Mwf&\nwd&\Ewf&\SN\\[2pt]
\hline\strutt
\Nwf&\01&1&>1&>1&>1\\
\Mwf&\01&1&>1&>1&>1\\
\nwd&>1&1&\01&>1&>1\\
\Ewf&\01&1&>1&\01&>1\\
\SN&\01&\text{independent}&>1&\text{independent}&\01\\[2pt]
\hline
\end{array}
$$
\caption{Dichotomous behavior of the relative cofinality on reals.
``Independent'' means that the corresponding case is not decidable within $\ZFC$.}
\label{table2}
\end{table}
We improve the inequality $\cofJ{\Nwf}{\nwd}\geq\add{\nwd}=\aleph_0$
to $\cofJ{\Nwf}{\nwd}\geq\dfrak$ (\autoref{uncountableCof}).

In \Cref{secMaxIdeals} we study relative cofinality of maximal ideals. We show that $\cofJ{\Jwf}{\Iwf}=1$ if and only if $\cofJ{\Jwf}{\Iwf}\leq\omega$ (\autoref{maxB}). We also show (\autoref{maxC}) that, in a~sense, the uniformity of a~maximal ideal $\Iwf$ is the boundary between triviality and uncountability of the relative cofinality $\cofJ{[X]^{<\kappa}}{\Iwf}$. More precisely, we have shown that $1=\cofJ{[X]^{<\lambda}}{\Iwf}<\aleph_1\leq\cofJ{[X]^{<\kappa}} {\Iwf}$ whenever $\kappa\leq\non{\Iwf}<\lambda$. A~new cardinal invariant bounding relative cofinality of maximal ideals from below in specific cases is briefly studied.

\section{Preliminaries}
We work in the standard set theory $\ZFC$.
We let $\omega$ denote the set of all natural numbers and we will use $\forall^\infty n\in\omega$ to abbreviate ``for all but finitely many natural numbers'' and $\exists^\infty n\in\omega$ ``for infinitely many natural numbers''.

\begin{definition}
Let $X$ be an infinite set.
A family $\Iwf\subseteq\Pwf(X)$ is an \textit{ideal on} $X$ if for all $A,B\subseteq X$,
\begin{enumerate}[label=(\arabic*)]\label{idealdef}
\item
if $A\in\Iwf$, $B\subseteq A$, then $B\in\Iwf$,
\item
if $A,B\in\Iwf$, then $A\cup B\in\Iwf$,
\item
$X\notin\Iwf$ and $[X]^{<\omega}\subseteq\Iwf$.
\end{enumerate}
Moreover, if $\Iwf$ satisfies also $\aleph_1$-completeness, i.e., the condition
\begin{enumerate}
\item[($2^*$)]
if $\set{I_n}{n\in\omega}\subseteq\Iwf$, then $\bigcup_{n\in\omega}I_n\in\Iwf$,
\end{enumerate}
then the ideal $\Iwf$ is called a~$\sigma$\textit{-ideal}.
\end{definition}

Of particular importance are two kinds of ideals: ideals on $\omega$ (i.e., on countable sets) and ideals on $\R$ (or other Polish spaces equipped with an adequate measure).
Both of these types are an important tool in the study of the structure of the real line.

A family $\Bwf\subseteq\Iwf$ is a~\textit{base of} $\Iwf$ if $\Bwf$ is cofinal in $\Iwf$ partially ordered by inclusion, i.e., if $\forall I\in\Iwf$ $\exists B\in\Bwf$ $I\subseteq B$.
For $\Awf\subseteq\Pwf(X)$ we define
\[
\gen{\Awf}=\largeset{E\in\Pwf(X)}{\exists\Awf'\in[\Awf]^{<\omega}\ E\subseteq^*\bigcup\Awf'}.
\]
If $X\notin\gen{\Awf}$ then $\gen{\Awf}$ is the smallest ideal containing $\Awf$ and is called the ideal \textit{generated by}~$\Awf$.
We use $\gen{A}$ instead of $\gen{\{A\}}$.
If $\Iwf$, $\Jwf$ are ideals on $X$ we let $\Iwf\vee\Jwf=\gen{\Iwf\cup\Jwf}$.
Note that $\Iwf\vee\Jwf=\set{I\cup J}{I\in\Iwf,J\in\Jwf}$ and if
it is an ideal, then it is the supremum of $\{\Iwf,\Jwf\}$ in the family of all ideals on $X$ ordered by inclusion.

Ideals
$\Iwf$ and $\Jwf$ are said to be \textit{orthogonal}, written $\Iwf\perp\Jwf$, if there is $I\in\Iwf$ such that $X\setminus I\in\Jwf$.
We define the family of $\Iwf$-positive sets by $\Iwf^+=\Pwf(X)\setminus\Iwf$ and the dual filter to $\Iwf$ by $\Iwf^*=\set{X\setminus I}{I\in\Iwf}$.
If $E\in\Iwf^+$ we define the following ideal on $E$: $\Iwf{\restriction}E=\set{I\cap E}{I\in\Iwf}$.
For technical reasons we will always consider the set $\{\emptyset\}$ as an ideal (even $\sigma$-ideal) on any infinite set $X$, even if this set does not satisfy the third condition of \autoref{idealdef}.

Let $\Iwf$ be an ideal on $X$.
We consider the four \textit{cardinal characteristics associated with} $\Iwf$: additivity, covering, uniformity and cofinality, which are defined as follows.
\begin{align*}
&\add{\Iwf}=\min\largeset{\abs{\Awf}}{\Awf\subseteq\Iwf
\text{ and }\bigcup\Awf\notin\Iwf},\\
&\cov{\Iwf}=\min\largeset{\abs{\Awf}}{\Awf\subseteq\Iwf\text{ and }\bigcup \Awf=X},\\
&\non{\Iwf}=\min\set{\abs{A}}{A\subseteq X\text{ and }A\notin\Iwf}, \\
&\cof{\Iwf}=\min\set{\abs{\Bwf}}{\Bwf\text{ is a~base of }\Iwf}.
\end{align*}

One can easily prove the inequalities between the cardinal characteristics in \autoref{basicineq}.
If $\Iwf$~is a~$\sigma$-ideal, the leftmost part of the diagram can be changed to
$\aleph_1\longrightarrow\add{\Iwf}$.
\begin{figure}[h]
$$
\begin{tikzcd}
&&\cov{\Iwf}\arrow[r]\arrow[ddr]&\cof{\Iwf}\arrow[dr]\\
\aleph_0 \arrow[r] & \add{\Iwf}\arrow[ur] \arrow[dr]&&& \abs{\Iwf}\arrow[r]&2^{\abs{X}}\\
&&\non{\Iwf}\arrow[r]\arrow[uur]&\abs{X}\arrow[ur]
\end{tikzcd}
$$
\caption{Inequalities between cardinal characteristics associated with $\Iwf$, $a\rightarrow b$ means $a\leq b$.}
\label{basicineq}
\end{figure}
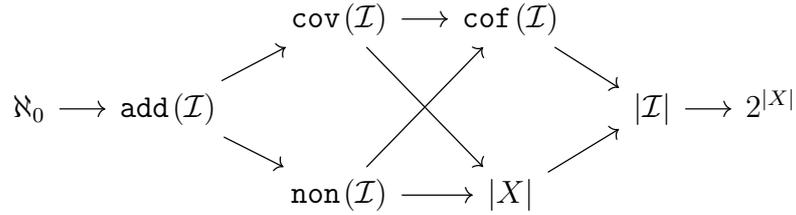

An ideal $\Iwf$ can be interpreted as a~partially ordered set $(\Iwf,{\subseteq})$.
We will consider also partial preorders~${\subseteq^\Jwf}$ on~$\Iwf$ for various ideals~$\Jwf$ on~$X$.
Since any preorder is a~relational system, we shall employ the theory of relational systems (see e.g., \cite{blass, vojtas}) to analyze partial preorders.

By a~\textit{relational system} we understand an ordered triple
$\Rbf=(X,Y,{\sqsubset})$ where $X$ and $Y$ are nonempty sets and ${\sqsubset}$ is a~subset of $X\times Y$.
Given a~relational system $\Rbf=(X,Y,{\sqsubset})$, a~set $A\subseteq X$ is said to be $\Rbf$-\textit{unbounded} if
$\forall y\in Y$ $\exists a\in A$ $\neg (a\sqsubset y)$; a~set
$B\subseteq Y$ is said to be $\Rbf$-\textit{cofinal} if
$\forall x\in X$ $\exists b\in B$ $x\sqsubset b$.
Define
\begin{align*}
&\bfrak(\Rbf)=\min\set{\abs{A}}{A\subseteq X
\text{ is }\Rbf\text{-unbounded}},\\
&\dfrak(\Rbf)=\min\set{\abs{A}}{A\subseteq X
\text{ is } \Rbf\text{-cofinal}}.
\end{align*}

The cardinal invariants $\bfrak(\Rbf)$ and $\dfrak(\Rbf)$ may be undefined.
In fact, $\bfrak(\Rbf)$ is undefined if and only if $\dfrak(\Rbf)=1$ and vice versa.
If $\bfrak(\Rbf)$ is not defined we write $\bfrak(\Rbf)=\infty$;
otherwise we write $\bfrak(\Rbf)<\infty$.
Similarly for $\dfrak(\Rbf)$.
The situation slightly changes when $\Rbf$~is a~partial preorder:
Due to reflexivity of~$\Rbf$ the cardinal value $\dfrak(\Rbf)$ is always defined.

A~partially preordered set $(X,{\sqsubset})$ can be thought of as a~relational system $(X,X,{\sqsubset})$, thus, we shall write $\bfrak(X,{\sqsubset})$ and $\dfrak(X,{\sqsubset})$ instead of $\bfrak(X,Y,{\sqsubset})$ and $\dfrak(X,Y,\sqsubset)$, respectively.
Notice that $\add{\Iwf}=\bfrak(\Iwf,{\subseteq})$ and $\cof{\Iwf}=\dfrak(\Iwf,{\subseteq})$.
Similarly, for ideals $\Iwf$ and $\Jwf$ on~$X$ let us define
\begin{align*}
&\addJ{\Jwf}{\Iwf}=\bfrak(\Iwf,{\subseteq^\Jwf})=\min
\set{\abs{\Awf}}
{\Awf\subseteq\Iwf\ \land\
\forall I\in\Iwf\ \exists A\in\Awf\ I\nsupseteq^\Jwf A},\\
&\cofJ{\Jwf}{\Iwf}=\dfrak(\Iwf,{\subseteq^\Jwf})=\min
\set{\abs{\Awf}}
{\Awf\subseteq\Iwf\ \land\
\forall I\in\Iwf\ \exists A\in\Awf\ I\subseteq^\Jwf A}.
\end{align*}

If $\Rbf=(X,Y,{\sqsubset})$ and $\Rbf'=(X',Y',{\sqsubset'})$ are relational systems, a pair of mappings $(\Psi_-,\Psi_+)$ is called a~\textit{Tukey connection from} $\Rbf$ \textit{to} $\Rbf'$ if
\begin{align*}
&\Psi_-\colon X\to X',
\quad
\Psi_+\colon Y'\to Y,
\quad\text{and}\\
&\forall x\in X\ \forall y'\in Y'\
(\Psi_-(x)\sqsubset' y'\Rightarrow x\sqsubset\Psi_+(y')).
\end{align*}
If there exists a Tukey connection from $\Rbf$
to $\Rbf'$, we write
$\Rbf\leqT\Rbf'$.
If $\Rbf\leqT \Rbf'$ and
$\Rbf\leqT\Rbf'$, we write
$\Rbf\eqT\Rbf'$.
One can show that if $\Rbf\leqT\Rbf'$,
then $\bfrak(\Rbf')\leq \bfrak(\Rbf)$ and
$\dfrak(\Rbf)\leq\dfrak(\Rbf')$.
Consequently, if $\Rbf\eqT\Rbf'$
then $\bfrak(\Rbf)= \bfrak(\Rbf')$ and
$\dfrak(\Rbf)=\dfrak(\Rbf')$.

We define a product of relational systems $\Rbf=(X,Y,{\sqsubset})$ and $\Rbf'=(X',Y',{\sqsubset'})$ by
\[
\Rbf\times\Rbf'=
(X\times X', Y\times Y', {\sqsubset_\times}),
\]
where $(x,x')\sqsubset_\times (y,y')$ iff $x\sqsubset x'$ and $y\sqsubset' y'$.

\begin{lemma}[\cite{blass}, Theorem~4.11]\label{prod}
\startlist
\begin{enumerate}[label=\normalfont(\alph*)]
\item
$\bfrak(\Rbf\times\Rbf')=
\min\{\bfrak(\Rbf), \bfrak(\Rbf')\}$,
\item
$\max\{\dfrak(\Rbf),\dfrak(\Rbf')\}\leq
\dfrak(\Rbf\times\Rbf')\leq
\dfrak(\Rbf)\cdot\dfrak(\Rbf')$.
\qed
\end{enumerate}
\end{lemma}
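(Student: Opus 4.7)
The plan is to verify each of the claimed (in)equalities directly from the definitions of $\bfrak$, $\dfrak$, and the product. For part (a), the upper bound $\bfrak(\Rbf\times\Rbf') \leq \min\{\bfrak(\Rbf), \bfrak(\Rbf')\}$ I would obtain by embedding a one-sided unbounded witness into the product: if $A \subseteq X$ is $\Rbf$-unbounded of minimal size and $x'_0 \in X'$ is any fixed element, then $A \times \{x'_0\}$ is $\Rbf\times\Rbf'$-unbounded, because for every $(y, y') \in Y \times Y'$ there is some $a \in A$ with $a \not\sqsubset y$, which already defeats the conjunction defining $\sqsubset_\times$. The symmetric construction yields $\bfrak(\Rbf\times\Rbf') \leq \bfrak(\Rbf')$.

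For the matching lower bound, I would take an unbounded $C \subseteq X \times X'$ of minimal cardinality $\bfrak(\Rbf\times\Rbf')$ and look at its coordinate projections $\pi_1(C) \subseteq X$ and $\pi_2(C) \subseteq X'$. If both were bounded, say by $y \in Y$ and $y' \in Y'$ respectively, then the single pair $(y, y')$ would dominate every element of $C$, contradicting unboundedness. Hence at least one projection is unbounded in its factor, so $\min\{\bfrak(\Rbf), \bfrak(\Rbf')\} \leq |C| = \bfrak(\Rbf\times\Rbf')$.

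For part (b), the upper bound is immediate: the Cartesian product $B \times B'$ of cofinal witnesses in $Y$ and $Y'$ is itself cofinal in $Y \times Y'$, so $\dfrak(\Rbf \times \Rbf') \leq \dfrak(\Rbf) \cdot \dfrak(\Rbf')$. For each lower bound, I would fix an auxiliary element in the opposite factor and project a cofinal $D \subseteq Y \times Y'$: given $x \in X$, pick any $x'_0 \in X'$, apply cofinality of $D$ to $(x, x'_0)$ to obtain $(b, b') \in D$ with $x \sqsubset b$, and read off a witness in $\pi_1(D)$, showing $\pi_1(D)$ is $\Rbf$-cofinal and hence $\dfrak(\Rbf) \leq |D|$; the other bound is symmetric.

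No genuine obstacle is expected; the only thing to watch is the implicit use of nonemptiness of $X$, $X'$, $Y$, $Y'$ to pick the auxiliary fixed elements, which is built into the definition of a relational system given above. The Tukey formalism recalled just before the lemma would give a slightly more compact presentation, but the direct projection/insertion arguments sketched here are the shortest route.
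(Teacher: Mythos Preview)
The paper does not give its own proof of this lemma; it is stated with a citation to \cite{blass} and closed with a \qed. Your direct projection/embedding argument is correct and is the standard elementary verification, so there is nothing to compare against.
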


\begin{definition}
For two ideals $\Iwf$ and $\Jwf$ we define
\[
\Iwf\oplus\Jwf=\set{(I\times\{0\})\cup(J\times\{1\})}{I\in\Iwf\ \land\ J\in\Jwf}.
\]
\end{definition}

One can easily observe the following:

\begin{observation}\label{disjSumProp}
\startlist
\begin{enumerate}[label=\normalfont(\alph*)]
\item\label{disjSumPropc}
$\add{\Iwf\oplus\Jwf}=\min\{\add{\Iwf},\add{\Jwf}\}$.
\item\label{disjSumPropb}
$\cov{\Iwf\oplus\Jwf}=\max\{\cov{\Iwf},\cov{\Jwf}\}$.
\item\label{disjSumPropd}
$\non{\Iwf\oplus\Jwf}=\min\{\non{\Iwf},\non{\Jwf}\}$.
\item\label{disjSumPropa}
$\cof{\Iwf\oplus\Jwf}=\max\{\cof{\Iwf},\cof{\Jwf}\}$.
\qed
\end{enumerate}
\end{observation}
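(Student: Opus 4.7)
The plan is to prove each of the four equalities by a straightforward two-sided inequality, exploiting the canonical decomposition of subsets of the underlying set. Writing $\Iwf$ on $X$, $\Jwf$ on $Y$, and $Z=(X\times\{0\})\cup(Y\times\{1\})$, every $C\subseteq Z$ has a unique decomposition $C=(C_0\times\{0\})\cup(C_1\times\{1\})$, and the key point is that $C\in\Iwf\oplus\Jwf$ if and only if $C_0\in\Iwf$ and $C_1\in\Jwf$. This turns every statement about $\Iwf\oplus\Jwf$ into a coordinatewise statement.

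For the inequalities ``$\geq\min$'' in \ref{disjSumPropc} and \ref{disjSumPropd}, and ``$\geq\max$'' in \ref{disjSumPropb} and \ref{disjSumPropa}: given any witness family $\Fwf\subseteq\Iwf\oplus\Jwf$ of the invariant on the left, its coordinate projections produce families in $\Iwf$ and in $\Jwf$ of cardinality at most $\abs{\Fwf}$. In \ref{disjSumPropc}, an $\add$-witness has its union outside $\Iwf\oplus\Jwf$, so at least one projected family has its union outside its respective ideal. In \ref{disjSumPropd}, a non-small $C\subseteq Z$ has $C_0\notin\Iwf$ or $C_1\notin\Jwf$, yielding $\non{\Iwf}\leq\abs{C}$ or $\non{\Jwf}\leq\abs{C}$. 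In \ref{disjSumPropb} and \ref{disjSumPropa} the two projected families must simultaneously cover $X$ and $Y$, respectively form bases of $\Iwf$ and of $\Jwf$, so both $\cov$-values (resp.\ both $\cof$-values) are bounded by $\abs{\Fwf}$.

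For the reverse inequalities, I would go back the other way. In \ref{disjSumPropc} and \ref{disjSumPropd} the ``$\leq\min$'' direction comes from lifting a one-sided witness: if $\Awf\subseteq\Iwf$ is an $\add$-witness then $\set{I\times\{0\}}{I\in\Awf}$ is a witness in $\Iwf\oplus\Jwf$, and similarly $A\times\{0\}$ is a non-small subset of $Z$ whenever $A\notin\Iwf$; symmetrically on the $Y$-side. In \ref{disjSumPropb} I would enumerate covers of $X$ and of $Y$ along the common length $\max\{\cov{\Iwf},\cov{\Jwf}\}$, padding the shorter by repetition, and take the pointwise unions $(A_\alpha\times\{0\})\cup(B_\alpha\times\{1\})$. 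In \ref{disjSumPropa} I would take the full product $\set{(A\times\{0\})\cup(B\times\{1\})}{A\in\Awf,\ B\in\Bwf}$ of bases, which is a base of $\Iwf\oplus\Jwf$ by the decomposition and has cardinality $\cof{\Iwf}\cdot\cof{\Jwf}=\max\{\cof{\Iwf},\cof{\Jwf}\}$.

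There is no real obstacle: the only micro-subtlety is that the product of cardinalities in \ref{disjSumPropa} collapses to the maximum because $\cof{\Iwf}$ and $\cof{\Jwf}$ are both infinite, which is the standard consequence of $[X]^{<\omega}\subseteq\Iwf$ and $X\notin\Iwf$ shown in \autoref{basicineq}.
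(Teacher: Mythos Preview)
Your proof is correct and is exactly the kind of direct verification the paper has in mind: the paper gives no proof at all (the statement ends with a bare \qedsymbol), merely remarking afterward that $(\Iwf\oplus\Jwf,{\subseteq})$ is isomorphic to $(\Iwf,{\subseteq})\times(\Jwf,{\subseteq})$, so that \ref{disjSumPropc} and \ref{disjSumPropa} can also be read off from \autoref{prod}. Your coordinatewise decomposition is precisely this isomorphism unpacked, and your handling of the remaining items and of the $\cof{\Iwf}\cdot\cof{\Jwf}=\max\{\cof{\Iwf},\cof{\Jwf}\}$ point is fine.
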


Note that
$(\Iwf\oplus\Jwf,{\subseteq})$ is isomorphic to
$(\Iwf,{\subseteq})\times(\Jwf,{\subseteq})$.
Therefore,
$(\Iwf\oplus\Jwf,{\subseteq})\eqT
(\Iwf,{\subseteq})\times(\Jwf,{\subseteq})$
and \ref{disjSumPropc} and \ref{disjSumPropa} can be considered as
instances of \autoref{prod}.

\section{Relative cofinality}\label{SecRelativeCofinality}

In this section we study basic properties of the relative cofinality.
We improve some results from \cite{tatra} and we present basic technical machinery.

Let us start by mentioning that we can consider relative cofinality not only as a generalization of $\cofJ{*}{\Iwf}$ from \cite{inv}, but also as a generalization of classical cofinality.
Usually, the set $\{\emptyset\}$ is considered ideal even if it does not meet the definition.
This approach is chosen purely for technical reasons, e.g., in the definition of Fubini product of ideals.
Concerning the relative cofinality, this approach is useful too, since clearly $\cofJ{\{\emptyset\}}{\Iwf}=\cof{\Iwf}$.

At this point we will focus on finding the bounds of the invariant of relative cofinality. Recall that in \cite{tatra}, the first author proved the following.\footnote{In the original paper \cite{tatra}, the lemma is stated for $X=\omega$.
However, it is very easy to see that the proofs work for any infinite set $X$.}

\begin{lemma}[A. Marton]\label{basicPropcofJ}
Let $\Iwf$, $\Jwf$, $\Jwf'$ be ideals on $X$.
\begin{enumerate}[label=\normalfont(\alph*)]
\item\label{basicPropcofJa}
If $\Iwf\subseteq\Jwf$ then\/ $\cofJ{\Jwf}{\Iwf}=1$.
In particular, $\cofJ{\Iwf}{\Iwf}=1$.
\item\label{basicPropcofJb}
$\cofJ{\Jwf}{\Iwf}\leq\cof{\Iwf}$.
\item\label{basicPropcofJc}
If $\Jwf\subseteq\Jwf'$ then\/ $\cofJ{\Jwf'}{\Iwf}\leq\cofJ{\Jwf}{\Iwf}$.
\item\label{basicPropcofJd}
Either\/ $\cofJ{\Jwf}{\Iwf}=1$ or\/ $\cofJ{\Jwf}{\Iwf}\geq\omega$.
\item\label{basicPropcofJe}
If $\Iwf$ is a\/ $\pid{\Jwf}$-ideal, then either\/ $\cofJ{\Jwf}{\Iwf}=1$ or\/ $\cofJ{\Jwf}{\Iwf}\geq\omega_1$.
\qed
\end{enumerate}
\end{lemma}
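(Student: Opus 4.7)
The plan is to handle the five parts in order, exploiting that each builds on properties of ideals and the relation ${\subseteq^\Jwf}$. Parts (a)--(c) are essentially definition chasing. For (a), if $\Iwf\subseteq\Jwf$ then the singleton witness $\Awf=\{\emptyset\}\subseteq\Iwf$ works, since every $I\in\Iwf$ satisfies $I\setminus\emptyset=I\in\Jwf$. For (b), any $\subseteq$-base $\Bwf$ of $\Iwf$ is also $\subseteq^\Jwf$-cofinal because $I\subseteq B$ implies $I\setminus B=\emptyset\in\Jwf$. For (c), if $\Jwf\subseteq\Jwf'$, then any $\subseteq^\Jwf$-cofinal family $\Awf\subseteq\Iwf$ is automatically $\subseteq^{\Jwf'}$-cofinal since $I\setminus A\in\Jwf\subseteq\Jwf'$.

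The content is in (d) and (e), and both rely on the same kind of argument: turning a small cofinal family into a single element by forming an appropriate union. For (d), suppose $\cofJ{\Jwf}{\Iwf}$ is finite, witnessed by $\Awf=\{A_1,\dots,A_n\}\subseteq\Iwf$. Since $\Iwf$ is closed under finite unions, $A=A_1\cup\cdots\cup A_n\in\Iwf$. For each $I\in\Iwf$ pick $i$ with $I\setminus A_i\in\Jwf$; then $I\setminus A\subseteq I\setminus A_i\in\Jwf$, so $\{A\}$ already witnesses $\cofJ{\Jwf}{\Iwf}=1$.

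For (e), the same pattern works, but now the closure under finite unions is replaced by the $\pid{\Jwf}$-property. Assume $\cofJ{\Jwf}{\Iwf}\leq\omega$, witnessed by $\Awf=\{A_n:n\in\omega\}\subseteq\Iwf$. By the $\pid{\Jwf}$-property there is $A\in\Iwf$ with $A_n\subseteq^\Jwf A$ for all $n$. Given $I\in\Iwf$, choose $n$ with $I\setminus A_n\in\Jwf$; then
\[
I\setminus A\subseteq(I\setminus A_n)\cup(A_n\setminus A)\in\Jwf,
\]
so $\{A\}$ is $\subseteq^\Jwf$-cofinal in $\Iwf$, giving $\cofJ{\Jwf}{\Iwf}=1$. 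There is no real obstacle here; the only thing to be careful about is the set-theoretic inclusion $I\setminus A\subseteq(I\setminus A_n)\cup(A_n\setminus A)$, which is immediate from splitting on whether a point of $I\setminus A$ lies in $A_n$ or not.
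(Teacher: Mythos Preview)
Your proof is correct in all five parts. Note that the paper does not actually supply a proof of this lemma: it is stated with attribution to the first author's earlier paper and closed with a \qed, so there is no ``paper's own proof'' to compare against. Your arguments are exactly the natural ones and match what one would expect the original proof in the cited reference to contain.
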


One can easily show that the dichotomy part \autoref{basicPropcofJ}\ref{basicPropcofJd} can be improved as follows.

\begin{lemma}\label{add<=cof}
If\/ $\cofJ{\Jwf}{\Iwf}>1$, then
$(\Iwf,{\nsupseteq})\leqT
(\Iwf,{\nsupseteq^\Jwf})\leqT
(\Iwf,\subseteq^\Jwf)\leqT
(\Iwf,{\subseteq})$.
As a~consequence, if\/ $\cofJ{\Jwf}{\Iwf}>1$, then
$\omega\leq\add{\Iwf}\leq\addJ{\Jwf}{\Iwf}\leq
\cofJ{\Jwf}{\Iwf}\leq\cof{\Iwf}$.
\end{lemma}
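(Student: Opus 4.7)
My plan is to establish the three Tukey connections in the chain separately, and then read off the cardinal inequalities by applying $\dfrak$ to each.

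For the outer connections $(\Iwf,{\subseteq^\Jwf})\leqT(\Iwf,{\subseteq})$ and $(\Iwf,{\nsupseteq})\leqT(\Iwf,{\nsupseteq^\Jwf})$, the identity on $\Iwf$ will serve as both $\Psi_-$ and $\Psi_+$. The implication $A\subseteq B\Rightarrow A\subseteq^\Jwf B$ (equivalently $B\supseteq A\Rightarrow B\supseteq^\Jwf A$) directly verifies the Tukey condition in both cases, and the hypothesis $\cofJ{\Jwf}{\Iwf}>1$ is not used for these two.

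The middle connection $(\Iwf,{\nsupseteq^\Jwf})\leqT(\Iwf,{\subseteq^\Jwf})$ is where the hypothesis enters. Since $\cofJ{\Jwf}{\Iwf}>1$, no single element of $\Iwf$ is $\subseteq^\Jwf$-cofinal, so for every $y\in\Iwf$ I can choose a witness $Z_y\in\Iwf$ with $Z_y\setminus y\notin\Jwf$. I set $\Psi_-=\id$ and $\Psi_+(y)=Z_y$; the required implication becomes: if $x\setminus y\in\Jwf$, then $Z_y\setminus x\notin\Jwf$. This follows from the inclusion $Z_y\setminus x\supseteq(Z_y\setminus y)\setminus(x\setminus y)$, since removing a $\Jwf$-set from a $\Jwf$-positive set leaves a $\Jwf$-positive set (otherwise $Z_y\setminus y$ would decompose as a union of two sets in $\Jwf$).

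I expect the only real obstacle to be the observation that the witness $Z_y$ must depend on $y$; a uniform choice of $Z\in\Iwf\setminus\Jwf$ (which exists whenever $\cofJ{\Jwf}{\Iwf}>1$) would fail because one could have $Z\subseteq^\Jwf y$ for some $y\in\Iwf$, trivializing the construction. Once all three Tukey connections are in hand, the cardinal consequences follow by applying $\dfrak$ along the chain, using the routine identifications $\dfrak(\Iwf,{\nsupseteq})=\add{\Iwf}$, $\dfrak(\Iwf,{\nsupseteq^\Jwf})=\addJ{\Jwf}{\Iwf}$, $\dfrak(\Iwf,{\subseteq^\Jwf})=\cofJ{\Jwf}{\Iwf}$, and $\dfrak(\Iwf,{\subseteq})=\cof{\Iwf}$, all of which unwind directly from the definitions. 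The leftmost bound $\omega\leq\add{\Iwf}$ is standard, being a consequence of the closure of $\Iwf$ under finite unions.
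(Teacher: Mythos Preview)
Your proof is correct and follows essentially the same approach as the paper. The only cosmetic difference is in the middle Tukey connection: the paper puts the witness in $\Psi_-$ (choosing $\Psi_-(A)\supseteq A$ with $\Psi_-(A)\nsubseteq^\Jwf A$ and setting $\Psi_+=\id$), whereas you put it in $\Psi_+$ (setting $\Psi_-=\id$ and choosing $\Psi_+(y)=Z_y$ with $Z_y\nsubseteq^\Jwf y$); both rest on the same use of the hypothesis $\cofJ{\Jwf}{\Iwf}>1$ and the same set-difference arithmetic.
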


\begin{proof}
We describe the Tukey connection
$(\Psi_-,\Psi_+)\colon(\Iwf,\nsupseteq^\Jwf)\to(\Iwf,\subseteq^\Jwf)$; the other two connections are witnessed by identity functions.
Let $A,B\in\Iwf$.
Define $\Psi_+(B)=B$.
Since $\cofJ{\Jwf}{\Iwf}>1$ we can find $\Psi_-(A)\in\Iwf$ so that
$A\subseteq\Psi_-(A)$ and $\Psi_-(A)\nsubseteq^\Jwf A$.
Obviously,
$\Psi_-(A)\subseteq^\Jwf B\Rightarrow A\nsupseteq^\Jwf\Psi_+(B)$.
To prove the inequality
$\addJ{\Jwf}{\Iwf}\leq\cofJ{\Jwf}{\Iwf}$ apply the fact that
$\addJ{\Jwf}{\Iwf}=\dfrak(\Iwf,{\nsupseteq^\Jwf})$ and
$\cofJ{\Jwf}{\Iwf}=\dfrak(\Iwf,{\subseteq^\Jwf})$.
\end{proof}

We immediately get the following consequence.

\begin{corollary}\label{improvedDich}
\begin{flalign*}
&&&\cofJ{\Jwf}{\Iwf}>1\Leftrightarrow
\cofJ{\Jwf}{\Iwf}\geq\add{\Iwf}\Leftrightarrow
\addJ{\Jwf}{\Iwf}\leq\cof{\Iwf}\Leftrightarrow
\addJ{\Jwf}{\Iwf}<\infty.
&\llap{\qedsymbol}
\end{flalign*}
\end{corollary}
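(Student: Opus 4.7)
The plan is to prove the four conditions equivalent via a~short cycle. Most implications fall out directly from \autoref{add<=cof} together with the basic chain $\aleph_0\leq\add{\Iwf}\leq\cof{\Iwf}<\infty$ recorded in \autoref{basicineq}. The only step that requires a~separate observation is closing the cycle by $\addJ{\Jwf}{\Iwf}<\infty\Rightarrow\cofJ{\Jwf}{\Iwf}>1$, for which I would invoke the duality $\bfrak(\Rbf)=\infty\Leftrightarrow\dfrak(\Rbf)=1$ for relational systems recalled in the Preliminaries.

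More explicitly, \autoref{add<=cof} gives, under the hypothesis $\cofJ{\Jwf}{\Iwf}>1$, the whole chain $\add{\Iwf}\leq\addJ{\Jwf}{\Iwf}\leq\cofJ{\Jwf}{\Iwf}\leq\cof{\Iwf}$, which immediately yields both $\cofJ{\Jwf}{\Iwf}\geq\add{\Iwf}$ and $\addJ{\Jwf}{\Iwf}\leq\cof{\Iwf}$. For the converses, $\cofJ{\Jwf}{\Iwf}\geq\add{\Iwf}\Rightarrow\cofJ{\Jwf}{\Iwf}>1$ follows from $\add{\Iwf}\geq\aleph_0>1$, while $\addJ{\Jwf}{\Iwf}\leq\cof{\Iwf}\Rightarrow\addJ{\Jwf}{\Iwf}<\infty$ is trivial since $\cof{\Iwf}$ is always a~cardinal.

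The heart of the argument is the final implication $\addJ{\Jwf}{\Iwf}<\infty\Rightarrow\cofJ{\Jwf}{\Iwf}>1$, which I would prove by contrapositive. If $\cofJ{\Jwf}{\Iwf}=1$, i.e.\ $\dfrak(\Iwf,{\subseteq^\Jwf})=1$, then the duality for relational systems recorded in the Preliminaries immediately gives $\addJ{\Jwf}{\Iwf}=\bfrak(\Iwf,{\subseteq^\Jwf})=\infty$. I do not foresee a~genuine obstacle: if the cited duality is preferred in unpacked form, one can observe directly that a~single $\subseteq^\Jwf$-largest set $B\in\Iwf$ makes every family $\Awf\subseteq\Iwf$ bounded (witnessed by $I=B$), so no $\subseteq^\Jwf$-unbounded family exists in $\Iwf$ and $\addJ{\Jwf}{\Iwf}$ is undefined.
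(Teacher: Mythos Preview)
Your argument is correct and is essentially what the paper has in mind: the corollary is stated with a bare \qedsymbol\ as an immediate consequence of \autoref{add<=cof}, and the ingredients you spell out---the chain from \autoref{add<=cof}, the bound $\add{\Iwf}\geq\aleph_0$, and the duality $\bfrak(\Rbf)=\infty\Leftrightarrow\dfrak(\Rbf)=1$ from the Preliminaries---are exactly what is needed to unpack that claim. Note that the duality in fact gives the full equivalence $\addJ{\Jwf}{\Iwf}<\infty\Leftrightarrow\cofJ{\Jwf}{\Iwf}>1$ directly, so one could shortcut the cycle slightly, but your route is fine.
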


Note that
$\bfrak(X,{\sqsubset})\leq\cf(\dfrak(X,{\sqsubset}))$ for every
partially ordered set $(X,{\sqsubset})$ with infinite invariants
$\bfrak(\cdot)$ and $\dfrak(\cdot)$.
Therefore, if $\cofJ{\Jwf}{\Iwf}>1$, then by \autoref{add<=cof}
we have the following inequalities:
\[
\begin{tikzcd}
\add{\Iwf}\arrow[r]\arrow[drr]&\addJ{\Jwf}{\Iwf}\arrow[r]&\cf(\cofJ{\Jwf}{\Iwf})\arrow[r]&\cofJ{\Jwf}{\Iwf}\arrow[r]&\cof{\Iwf}\\
&&\cf(\cof{\Iwf})\arrow[rru]
\end{tikzcd}
\]

There are yet another two-parameter invariants $\add{\Iwf,\Jwf}$
and $\cof{\Iwf,\Jwf}$ (\cite[Definition 2.1.3]{bartjud})
for ideals $\Iwf\subseteq\Jwf$ defined by
\begin{align*}
&\add{\Iwf,\Jwf}=\bfrak(\Iwf,\Jwf,{\subseteq}),\\
&\cof{\Iwf,\Jwf}=\dfrak(\Iwf,\Jwf,{\subseteq}).
\end{align*}
Using the identity function defined on $\Iwf$ and the
$\Iwf$-coordinate of a~decomposition function from $\Iwf\vee\Jwf$
to $\Iwf\times\Jwf$ one can verify that
\[
(\Iwf,{\subseteq^\Jwf})\leqT(\Iwf,\Iwf\vee\Jwf,{\subseteq})\leqT(\Iwf,{\subseteq})
\]
and therefore
\[
\add{\Iwf}\leq\add{\Iwf,\Iwf\vee\Jwf}\leq\addJ{\Jwf}{\Iwf}\leq
\cofJ{\Jwf}{\Iwf}\leq\cof{\Iwf,\Iwf\vee\Jwf}\leq\cof{\Iwf}.
\]

We now find the characterization of the trivial case in \autoref{basicPropcofJ}\ref{basicPropcofJd}.
For this purpose we use the so-called restrictive inclusion.

\begin{definition}[\cite{my}]\label{restrictiveInc}
Let $\Iwf$ and $\Jwf$ be ideals on $X$.
We write $\Iwf\rsub\Jwf$ iff there is $E\in\Iwf^*$ such that $\Iwf{\restriction}E\subseteq\Jwf$.
\end{definition}

One can easily see that $\Iwf\rsub\Jwf$ if and only if $\Iwf$ has a $\Jwf$-union in $\Iwf$.
Recall that a set $E\subseteq X$ is $\Jwf$-union of $\Iwf$ if $I\subseteq^\Jwf E$ for each $I\in\Iwf$.

\begin{observation}\label{orthimpliescofJ1}
Let $\Iwf$ and $\Jwf$ be ideals on $X$.
\begin{enumerate}[label=\normalfont(\alph*)]
\item\label{orthimpliescofJ1a}
The following are equivalent.
\begin{enumerate}[label=\normalfont(\arabic*)]
\item\label{cofJI12}
$\cofJ{\Jwf}{\Iwf}=1$.
\item\label{cofJI11}
$\Iwf\rsub\Jwf$.
\item\label{cofJI13}
There is $A\in\Iwf$ such that $\Iwf\subseteq\gen{A}\vee\Jwf$.
\end{enumerate}
\item\label{orthimpliescofJ1b}
If $\Iwf\perp\Jwf$, then\/ $\cofJ{\Jwf}{\Iwf}=\cofJ{\Iwf}{\Jwf}=1$.
\end{enumerate}
\end{observation}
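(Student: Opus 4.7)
The plan is to prove part (a) by establishing (1) $\Leftrightarrow$ (2) via a direct complementation and (1) $\Leftrightarrow$ (3) via a decomposition, and then to deduce part (b) from a single explicit witness. I expect no serious obstacle, since the statement is essentially a translation between three ways of saying that a single element of $\Iwf$ suffices to approximate all of $\Iwf$ modulo $\Jwf$.

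First I would handle (1) $\Leftrightarrow$ (2). A singleton $\{A\}\subseteq\Iwf$ witnesses $\cofJ{\Jwf}{\Iwf}=1$ precisely when $I\setminus A\in\Jwf$ for every $I\in\Iwf$. Setting $E:=X\setminus A\in\Iwf^*$ translates this into $I\cap E\in\Jwf$ for every $I\in\Iwf$, which is exactly $\Iwf{\restriction}E\subseteq\Jwf$. The correspondence $A\leftrightarrow X\setminus A$ yields both implications at once.

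Next I would prove (1) $\Leftrightarrow$ (3). For the forward direction, if $A$ witnesses (1) then any $I\in\Iwf$ decomposes as $I=(I\cap A)\cup(I\setminus A)$; the first piece lies in $\gen{A}$ because it is contained in $A$, and the second lies in $\Jwf$ by assumption, so $I\in\gen{A}\vee\Jwf$. For the reverse direction, if every $I\in\Iwf$ can be written as $F\cup J$ with $F\subseteq^* A$ and $J\in\Jwf$, then $I\setminus A\subseteq(F\setminus A)\cup J$; here $F\setminus A$ is finite, hence lies in $\Jwf$ by the ideal axioms, so $I\setminus A\in\Jwf$.

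Finally, for part (b), under $\Iwf\perp\Jwf$ I would pick $I_0\in\Iwf$ with $J_0:=X\setminus I_0\in\Jwf$. Then $\{I_0\}$ witnesses $\cofJ{\Jwf}{\Iwf}=1$ because $I\setminus I_0\subseteq J_0\in\Jwf$ for every $I\in\Iwf$, and symmetrically $\{J_0\}$ witnesses $\cofJ{\Iwf}{\Jwf}=1$. The only minor subtlety throughout is the use, in (3) $\Rightarrow$ (1), of the fact that $\Jwf$ contains all finite subsets of $X$, which is why the ``$\subseteq^*$'' appearing in the definition of $\gen{A}$ causes no trouble.
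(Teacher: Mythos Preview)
Your proof is correct and follows essentially the same approach as the paper. The only cosmetic difference is organizational: the paper runs a cycle $(2)\Rightarrow(1)\Rightarrow(3)\Rightarrow(2)$ and deduces (b) from (a) via $\Iwf\perp\Jwf\Rightarrow\Iwf\rsub\Jwf$, whereas you prove $(1)\Leftrightarrow(2)$ and $(1)\Leftrightarrow(3)$ directly and give explicit witnesses for (b); the underlying arguments are identical.
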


\begin{proof}
\ref{orthimpliescofJ1a}:
\ref{cofJI11}${}\Rightarrow{}$\ref{cofJI12} By the assumption there is $A\in\Iwf$ such that $I\subseteq^\Jwf A$ for all $I\in\Iwf$.
Thus, $\{A\}$~is clearly a witness for $\cofJ{\Jwf}{\Iwf}$.

\ref{cofJI12}${}\Rightarrow{}$\ref{cofJI13} Let $\{A\}$ be a witness for $\cofJ{\Jwf}{\Iwf}$.
Express any $I\in\Iwf$ as $I=(I\cap A)\cup (I\setminus A)$.
Then $I\cap A\subseteq A$ and $I\setminus A\in\Jwf$, thus, $I\in\gen{A}\vee\Jwf$.

\ref{cofJI13}${}\Rightarrow{}$\ref{cofJI11} $X\setminus A$ is clearly a witness for $\Iwf\rsub\Jwf$.

\ref{orthimpliescofJ1b}: Since $\Iwf\perp\Jwf$ implies both, $\Iwf\rsub\Jwf$ and $\Jwf\rsub\Iwf$, the result follows immediately from~\ref{orthimpliescofJ1a}.
\end{proof}

The following simple observation states that it is enough to consider pairs $\Iwf$, $\Jwf$ such that $\Jwf$ is a subideal of $\Iwf$.
Next, we show an easy way how to construct bases of $\Iwf$ from witnesses of the relative cofinality.

\begin{observation}\label{basecompletion}
Let $\Iwf$, $\Jwf$ be ideals on $X$.
\begin{enumerate}[label=\normalfont(\alph*)]
\item\label{intersectionIsEnough}
$\cofJ{\Jwf}{\Iwf}=\cofJ{\Jwf\cap\Iwf}{\Iwf}$.
\item\label{basecompletionb}
Let $\Jwf\subseteq\Iwf$, let $\Awf\subseteq\Iwf$ be cofinal
in\/ $(\Iwf,{\subseteq^\Jwf})$ and let $\Bwf_\Jwf$ be a base
of~$\Jwf$.
Then $\Awf'=\set{A\cup J}{A\in\Awf\ \land\ J\in\Bwf_\Jwf}$
is a~base of~$\Iwf$.
\item\label{basecompletionc}
If $\Jwf\subseteq\Iwf$, then\/
$\cof{\Iwf}\leq\cofJ{\Jwf}{\Iwf}\cdot\cof{\Jwf}$.
\end{enumerate}
\end{observation}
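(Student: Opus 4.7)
For part \ref{intersectionIsEnough}, the inequality $\cofJ{\Jwf}{\Iwf}\leq\cofJ{\Jwf\cap\Iwf}{\Iwf}$ is immediate from \autoref{basicPropcofJ}\ref{basicPropcofJc} since $\Jwf\cap\Iwf\subseteq\Jwf$. For the reverse inequality, I plan to show that any witness $\Awf\subseteq\Iwf$ for $\cofJ{\Jwf}{\Iwf}$ already witnesses $\cofJ{\Jwf\cap\Iwf}{\Iwf}$. The key point is that if $I\in\Iwf$ and $A\in\Awf$ satisfy $I\setminus A\in\Jwf$, then $I\setminus A\subseteq I\in\Iwf$ forces $I\setminus A\in\Jwf\cap\Iwf$, hence $I\subseteq^{\Jwf\cap\Iwf}A$. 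So no enlargement of the witnessing family is needed.

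For part \ref{basecompletionb}, I would take an arbitrary $B\in\Iwf$ and chase definitions: cofinality of $\Awf$ in $(\Iwf,{\subseteq^\Jwf})$ gives $A\in\Awf$ with $B\setminus A\in\Jwf$, and then the fact that $\Bwf_\Jwf$ is a base of $\Jwf$ gives some $J\in\Bwf_\Jwf$ with $B\setminus A\subseteq J$. Combining yields $B\subseteq A\cup J\in\Awf'$. The hypothesis $\Jwf\subseteq\Iwf$ ensures $A\cup J\in\Iwf$, so $\Awf'\subseteq\Iwf$ is genuinely a base.

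Part \ref{basecompletionc} is then a direct cardinality count from \ref{basecompletionb}: pick $\Awf$ of size $\cofJ{\Jwf}{\Iwf}$ and $\Bwf_\Jwf$ of size $\cof{\Jwf}$; the base $\Awf'$ produced in \ref{basecompletionb} has cardinality at most $\cofJ{\Jwf}{\Iwf}\cdot\cof{\Jwf}$. I do not anticipate a real obstacle in any of the three parts — the statements are essentially bookkeeping. The only place where one must be slightly careful is \ref{intersectionIsEnough}, where one should notice that the refinement of $\Jwf$ to $\Jwf\cap\Iwf$ costs nothing precisely because the ``error sets'' $I\setminus A$ automatically lie in $\Iwf$.
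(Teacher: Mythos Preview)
Your proposal is correct and matches the paper's approach: the paper dismisses \ref{intersectionIsEnough} as trivial, proves \ref{basecompletionb} by exactly the chase of definitions you describe, and obtains \ref{basecompletionc} as the immediate cardinality consequence of \ref{basecompletionb}. Your extra justification for \ref{intersectionIsEnough} (downward closure of $\Iwf$ forces the error set into $\Jwf\cap\Iwf$) is precisely the ``trivial'' content the paper leaves implicit.
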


\begin{proof}
Item \ref{intersectionIsEnough} is trivial and
\ref{basecompletionc} is a~consequence of \ref{basecompletionb}.
We prove \ref{basecompletionb}.

Let $I\in\Iwf$.
By the assumptions there is $A\in\Awf$ such that
$I\setminus A\in\Jwf$.
Take $J\in\Bwf_\Jwf$ such that $I\setminus A\subseteq J$.
Then $I\subseteq(I\setminus A)\cup A\subseteq J\cup A$ and
$J\cup A\in\Awf'$.
\end{proof}

\begin{prop}\label{RCdepOnCof}
Let $\Iwf$ and $\Jwf$ be ideals on an infinite set~$X$.
\begin{enumerate}[label=\normalfont(\arabic*)]
\item\label{RCdepOnCof1}
If $\Jwf\subseteq\Iwf$ and\/ $\cof{\Jwf}\leq\cof{\Iwf}$,
then\/ $\cof{\Iwf}=\cofJ{\Jwf}{\Iwf}\cdot\cof{\Jwf}$.
\item\label{RCdepOnCof2}
If $\Jwf\subseteq\Iwf$ and\/ $\cof{\Jwf}<\cof{\Iwf}$,
then\/ $\cofJ{\Jwf}{\Iwf}=\cof{\Iwf}$.
\end{enumerate}
\end{prop}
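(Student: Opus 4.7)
The plan is to extract both parts from the combination of \autoref{basicPropcofJ}\ref{basicPropcofJb}, \autoref{basicPropcofJ}\ref{basicPropcofJd}, and \autoref{basecompletion}\ref{basecompletionc}, since these already give us the two bounds and the dichotomy; only a bit of cardinal arithmetic is needed to glue things together.

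For part \ref{RCdepOnCof1}, I first observe that the inequality $\cof{\Iwf}\leq\cofJ{\Jwf}{\Iwf}\cdot\cof{\Jwf}$ is exactly \autoref{basecompletion}\ref{basecompletionc}. For the reverse inequality, the key point is that \autoref{basicPropcofJ}\ref{basicPropcofJb} gives $\cofJ{\Jwf}{\Iwf}\leq\cof{\Iwf}$ and the hypothesis gives $\cof{\Jwf}\leq\cof{\Iwf}$. If $\cof{\Iwf}$ is infinite, then $\cofJ{\Jwf}{\Iwf}\cdot\cof{\Jwf}\leq\cof{\Iwf}\cdot\cof{\Iwf}=\cof{\Iwf}$, as desired. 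Otherwise $\cof{\Iwf}$ is finite, which (because $X$ is infinite and any proper ideal contains $[X]^{<\omega}$) forces $\Iwf=\{\emptyset\}$, hence $\Jwf=\{\emptyset\}$ by the inclusion $\Jwf\subseteq\Iwf$, and the identity $1=1\cdot 1$ holds trivially.

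For part \ref{RCdepOnCof2}, I apply part \ref{RCdepOnCof1} to get $\cof{\Iwf}=\cofJ{\Jwf}{\Iwf}\cdot\cof{\Jwf}$ and then rule out $\cofJ{\Jwf}{\Iwf}=1$ using the strict inequality: indeed, if $\cofJ{\Jwf}{\Iwf}=1$ then the product collapses to $\cof{\Jwf}$, contradicting $\cof{\Jwf}<\cof{\Iwf}$. By the dichotomy \autoref{basicPropcofJ}\ref{basicPropcofJd} we then have $\cofJ{\Jwf}{\Iwf}\geq\omega$. Noting that $\cof{\Iwf}>\cof{\Jwf}\geq 1$ is necessarily infinite, the product $\cofJ{\Jwf}{\Iwf}\cdot\cof{\Jwf}$ equals $\max\{\cofJ{\Jwf}{\Iwf},\cof{\Jwf}\}$; since this maximum equals $\cof{\Iwf}$ while $\cof{\Jwf}<\cof{\Iwf}$, the maximum must be attained at $\cofJ{\Jwf}{\Iwf}$, i.e., $\cofJ{\Jwf}{\Iwf}=\cof{\Iwf}$.

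There is no real obstacle here, since everything reduces to the two previously established bounds together with the basic dichotomy and the cardinal identity $\kappa\cdot\lambda=\max\{\kappa,\lambda\}$ for infinite cardinals. The only point requiring care is the degenerate case when one of the ideals is $\{\emptyset\}$ (so its cofinality is $1$ rather than $\geq\omega$); this is handled by the observations above.
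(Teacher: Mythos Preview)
Your proof is correct and follows essentially the same route as the paper: both directions of part~\ref{RCdepOnCof1} come from \autoref{basecompletion}\ref{basecompletionc} and \autoref{basicPropcofJ}\ref{basicPropcofJb} together with the hypothesis and the fact that $\cof{\Iwf}$ is infinite, and part~\ref{RCdepOnCof2} is then a consequence of part~\ref{RCdepOnCof1} via elementary cardinal arithmetic. Your treatment is slightly more careful than the paper's in that you handle the degenerate case $\Iwf=\{\emptyset\}$ explicitly (the paper simply asserts that $\cof{\Iwf}$ is infinite) and you spell out the deduction of~\ref{RCdepOnCof2} in detail rather than leaving it as ``a consequence of~\ref{RCdepOnCof1}''.
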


\begin{proof}
\ref{RCdepOnCof1}:
By \autoref{basecompletion}\ref{basecompletionc} and \autoref{basicPropcofJ}\ref{basicPropcofJb}
and because $\cof{\Iwf}$~is infinite we have
$\cof{\Iwf}\leq\cofJ{\Jwf}{\Iwf}\cdot\cof{\Jwf}\leq\cofJ{\Jwf}{\Iwf}\cdot\cof{\Iwf}=\cof{\Iwf}$.

\ref{RCdepOnCof2} is a~consequence of~\ref{RCdepOnCof1}.
\end{proof}

In the next two lemmas, we calculate the relative cofinality for ideals expressed by the operations $\cap$, $\vee$, and~$\oplus$.

\begin{lemma}\label{product}
Let $\Iwf$, $\Jwf$, $\Iwf_0$, $\Iwf_1$, $\Jwf_0$, $\Jwf_1$
be ideals on $X$.
\begin{enumerate}[label=\normalfont(\alph*)]
\item\label{product-a}
$(\Iwf,{\subseteq^{\Jwf_0\cap\Jwf_1}})\eqT
(\Iwf,{\subseteq^{\Jwf_0}})\times(\Iwf,{\subseteq^{\Jwf_1}})$.
As a~consequence,
\begin{align*}
&\addJ{\Jwf_0\cap\Jwf_1}{\Iwf}=
\min\{\addJ{\Jwf_0}{\Iwf},\addJ{\Jwf_1}{\Iwf}\},\\
&\cofJ{\Jwf_0\cap\Jwf_1}{\Iwf}=
\max\{\cofJ{\Jwf_0}{\Iwf},\cofJ{\Jwf_1}{\Iwf}\}.
\end{align*}

\item\label{product-b}
If $\Iwf_0\perp\Iwf_1$, then
$(\Iwf_0\cap\Iwf_1,{\subseteq^\Jwf})\eqT
(\Iwf_0,{\subseteq^\Jwf})\times(\Iwf_1,{\subseteq^\Jwf})$.
As a~consequence,
\begin{align*}
&\addJ{\Jwf}{\Iwf_0\cap\Iwf_1}=
\min\{\addJ{\Jwf}{\Iwf_0},\addJ{\Jwf}{\Iwf_1}\},\\
&\cofJ{\Jwf}{\Iwf_0\cap\Iwf_1}=
\max\{\cofJ{\Jwf}{\Iwf_0},\cofJ{\Jwf}{\Iwf_1}\}.
\end{align*}

\item\label{product-c}
If $\Iwf_0\not\perp\Iwf_1$ and $\Iwf_0\cap\Iwf_1\subseteq\Jwf$, then
$(\Iwf_0\vee\Iwf_1,{\subseteq^\Jwf})\eqT
(\Iwf_0,{\subseteq^\Jwf})\times(\Iwf_1,{\subseteq^\Jwf})$.
As a~consequence,
\begin{align*}
&\addJ{\Jwf}{\Iwf_0\vee\Iwf_1}=
\min\{\addJ{\Jwf}{\Iwf_0},\addJ{\Jwf}{\Iwf_1}\},\\
&\cofJ{\Jwf}{\Iwf_0\vee\Iwf_1}=
\max\{\cofJ{\Jwf}{\Iwf_0},\cofJ{\Jwf}{\Iwf_1}\}.
\end{align*}
\end{enumerate}
\end{lemma}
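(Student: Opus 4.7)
The plan is to prove each of the three parts by the same two-step template: first exhibit explicit Tukey connections in both directions that witness the claimed equivalence, and then deduce the cardinal identities from \autoref{prod} together with the dichotomy in \autoref{basicPropcofJ}\ref{basicPropcofJd}. The $\addJ{}$ formulas drop out directly from part~(a) of \autoref{prod}. For the $\cofJ{}$ formulas, part~(b) of \autoref{prod} only gives that $\dfrak$ of the product lies between the maximum and the product of the two individual $\dfrak$'s; the dichotomy (each relative cofinality is $1$ or infinite) forces that product to equal the maximum, collapsing the inequality to an equality.

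For \ref{product-a}, the crucial equivalence is that $A\subseteq^{\Jwf_0\cap\Jwf_1}B$ holds iff $A\subseteq^{\Jwf_0}B$ and $A\subseteq^{\Jwf_1}B$, since membership in $\Jwf_0\cap\Jwf_1$ splits along the intersection. Both Tukey connections are then realized by pairing the diagonal $I\mapsto(I,I)$ with the union $(A_0,A_1)\mapsto A_0\cup A_1$; the only nontrivial verification is that $I\setminus(A_0\cup A_1)\subseteq I\setminus A_j\in\Jwf_j$ for $j\in\{0,1\}$, and hence $I\setminus(A_0\cup A_1)\in\Jwf_0\cap\Jwf_1$.

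For \ref{product-b}, use $\Iwf_0\perp\Iwf_1$ to fix a partition $X=E_0\sqcup E_1$ with $E_j\in\Iwf_j$. The forward connection sends $I\in\Iwf_0\cap\Iwf_1$ to the split $(I\cap E_1,\,I\cap E_0)\in\Iwf_0\times\Iwf_1$ (the index reversal exploits that $I\cap E_1\in\Iwf_0$ as a subset of $I$, and likewise $I\cap E_0\in\Iwf_1$) and sends $(A,B)$ to $(A\cap E_1)\cup(B\cap E_0)$, which lies in $\Iwf_0\cap\Iwf_1$ because each summand sits inside the opposite~$E_j$. The reverse connection reuses this map in the $\Phi_-$ slot and takes the padded map $\Phi_+(I)=(I\cup E_0,\,I\cup E_1)$; here $E_j\in\Iwf_j$ is precisely what keeps each coordinate in the correct ideal. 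The reverse Tukey verification reduces cleanly to $A\setminus(I\cup E_0)=(A\cap E_1)\setminus I\subseteq \Phi_-(A,B)\setminus I\in\Jwf$, together with its dual for $B$.

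For \ref{product-c}, the non-orthogonality hypothesis $\Iwf_0\not\perp\Iwf_1$ is what makes $\Iwf_0\vee\Iwf_1$ an ideal, and every $I\in\Iwf_0\vee\Iwf_1$ admits at least one decomposition $I=I^{(0)}\cup I^{(1)}$ with $I^{(j)}\in\Iwf_j$; fix such a choice function $\delta(I)=(I^{(0)},I^{(1)})$ by the axiom of choice. Both Tukey connections pair $\delta$ in one slot with the union map $(A,B)\mapsto A\cup B$ in the other. The forward check is immediate. The main obstacle is the reverse check: from $A\cup B\subseteq^\Jwf I$ one must deduce $A\subseteq^\Jwf I^{(0)}$, which is genuinely nontrivial because $\delta$ is not canonical. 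The hypothesis $\Iwf_0\cap\Iwf_1\subseteq\Jwf$ is introduced precisely to overcome this obstacle, via the inclusion
\[
A\setminus I^{(0)}\subseteq(A\setminus I)\cup(A\cap I^{(1)}),
\]
whose first summand lies in $\Jwf$ by hypothesis and whose second satisfies $A\cap I^{(1)}\in\Iwf_0\cap\Iwf_1\subseteq\Jwf$ because $A\in\Iwf_0$ and $I^{(1)}\in\Iwf_1$.
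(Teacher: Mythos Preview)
Your proof is correct and essentially matches the paper's: the same Tukey connections (diagonal or decomposition paired with union) appear in all three parts, with only a minor variation in~\ref{product-b}, where the paper uses the simpler forward pair $\Psi_-(A)=(A,A)$, $\Psi_+(B_0,B_1)=B_0\cap B_1$ in place of your split map. Your explicit appeal to the dichotomy in \autoref{basicPropcofJ}\ref{basicPropcofJd} to collapse the product bound in \autoref{prod} down to the maximum is a detail the paper leaves implicit.
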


\begin{proof}
\ref{product-a}:
The following functions form Tukey connections:
\begin{align*}
&((\id_\Iwf,\id_\Iwf{}),{\cup})\colon
(\Iwf,{\subseteq^{\Jwf_0\cap\Jwf_1}})\to
(\Iwf,{\subseteq^{\Jwf_0}})\times(\Iwf,{\subseteq^{\Jwf_1}}),\\
&({\cup},(\id_\Iwf,\id_\Iwf))\colon
(\Iwf,{\subseteq^{\Jwf_0}})\times(\Iwf,{\subseteq^{\Jwf_1}})\to
(\Iwf,{\subseteq^{\Jwf_0\cap\Jwf_1}}),
\end{align*}
i.e., for every $A,B_0,B_1\in\Iwf$,
\begin{align*}
&(A\subseteq^{\Jwf_0}B_0\ \land\ A\subseteq^{\Jwf_1}B_1)\Rightarrow A\subseteq^{\Jwf_0\cap\Jwf_1}B_0\cup B_1\quad\text{and}\\
&B_0\cup B_1\subseteq^{\Jwf_0\cap\Jwf_1}A\Rightarrow
(B_0\subseteq^{\Jwf_0}A\ \land\ B_1\subseteq^{\Jwf_1}A).
\end{align*}

\ref{product-b}:
Let $I_0\in\Iwf_0$ be such that $I_1=X\setminus I_0\in\Iwf_1$.
We define Tukey connections
\begin{align*}
&(\Psi_-,\Psi_+)\colon
(\Iwf_0\cap\Iwf_1,{\subseteq^\Jwf})\to
(\Iwf_0,{\subseteq^\Jwf})\times(\Iwf_1,{\subseteq^\Jwf}),\\
&(\Phi_-,\Phi_+)\colon
(\Iwf_0,{\subseteq^\Jwf})\times(\Iwf_1,{\subseteq^\Jwf})\to
(\Iwf_0\cap\Iwf_1,{\subseteq^\Jwf}),
\end{align*}
where $\Psi_-=(\Psi_-^0,\Psi_-^1)$, $\Phi_+=(\Phi_+^0,\Phi_+^1)$ and for every
$A\in\Iwf_0\cap\Iwf_1$ and $(B_0,B_1)\in\Iwf_0\times\Iwf_1$,
we define
\begin{align*}
&\Psi_-^0(A)=\Psi_-^1(A)=A,&
&\Phi_-(B_0,B_1)=(B_0\cap I_1)\cup(B_1\cap I_0),\\
&\Psi_+(B_0,B_1)=B_0\cap B_1,&
&\Phi_+^i(A)=A\cup I_i,\quad i=0,\ 1.
\end{align*}
It is easy to verify that
\begin{align*}
&(\Psi_-^0(A)\subseteq^\Jwf B_0\ \land\
\Psi_-^1(A)\subseteq^\Jwf B_1)\Rightarrow
A\subseteq^\Jwf\Psi_+(B_0,B_1)
\quad\text{and}\\
&\Phi_-(B_0,B_1)\subseteq^\Jwf A\Rightarrow
(B_0\subseteq^\Jwf\Phi_+^0(A)\ \land\ B_1\subseteq^\Jwf\Phi_+^1(A)).
\end{align*}
To see the latter implication assume that
$(B_0\cap I_1)\cup(B_1\cap I_0)\subseteq^\Jwf A$.
Then we have $(B_0\setminus\Phi_+^0(A))\cup(B_1\setminus\Phi_+^1(A))=
(B_0\cap I_1\setminus A)\cup(B_1\cap I_0\setminus A)\in\Jwf$.

\ref{product-c}:
We define Tukey connections
\begin{align*}
&(\Psi_-,\Psi_+)\colon(\Iwf_0\vee\Iwf_1,{\subseteq^\Jwf})\to
(\Iwf_0,{\subseteq^\Jwf})\times(\Iwf_1,{\subseteq^\Jwf}),\\
&(\Psi_+,\Psi_-)\colon
(\Iwf_0,{\subseteq^\Jwf})\times(\Iwf_1,{\subseteq^\Jwf})\to
(\Iwf_0\vee\Iwf_1,{\subseteq^\Jwf}),
\end{align*}
where
$\Psi_-=(\Psi_-^0,\Psi_-^1)\colon\Iwf_0\vee\Iwf_1\to\Iwf_0\times\Iwf_1$
and for every
$A\in\Iwf_0\vee\Iwf_1$, $(B_0,B_1)\in\Iwf_0\times\Iwf_1$,
\[
A\subseteq\Psi_-^0(A)\cup\Psi_-^1(A)
\quad\text{and}\quad
\Psi_+(B_0,B_1)=B_0\cup B_1.
\]
Then
\begin{align*}
&(\Psi_-^0(A)\subseteq^\Jwf B_0\ \land\
\Psi_-^1(A)\subseteq^\Jwf B_1)\Rightarrow
A\subseteq^\Jwf\Psi_+(B_0,B_1)
\quad\text{and}\\
&\Psi_+(B_0,B_1)\subseteq^\Jwf A\Rightarrow
(B_0\subseteq^\Jwf\Psi_-^0(A)\ \land\ B_1\subseteq^\Jwf\Psi_-^1(A)).
\end{align*}

If $\Psi_-^i(A)\subseteq^\Jwf B_i$ for $i=0$,~$1$, then
$A\setminus(B_0\cup B_1)\subseteq
(\Psi_-^0(A)\setminus B_0)\cup(\Psi_-^1(A)\setminus B_1)\in\Jwf$.

If $B_0\cup B_1\subseteq^\Jwf A$, then $B_i\setminus A\in\Jwf$ and
\begin{flalign*}
&&&
B_i\setminus\Psi_-^i(A)\subseteq
\underbrace{(B_i\setminus A)}_{\substack{\in\Jwf}}\cup
\underbrace{(B_i\cap\Psi_-^{1-i}(A))}_{\substack
{\in\Iwf_0\cap\Iwf_1\subseteq\Jwf}},\quad i=0,\ 1.
&\llap{\qedsymbol}
\end{flalign*}
\let\qed\relax
\end{proof}

\begin{lemma}\label{disSum}
Let $\Iwf_X$, $\Jwf_X$ and $\Iwf_Y$, $\Jwf_Y$ be ideals on $X$ and $Y$, respectively.
Then
\[
(\Iwf_X\oplus\Iwf_Y,{\subseteq^{\Jwf_X\oplus\Jwf_Y}})\eqT
(\Iwf_X,{\subseteq^{\Jwf_X}})\times(\Iwf_Y,{\subseteq^{\Jwf_Y}}).
\]
As a~consequence,
\begin{align*}
&\addJ{\Jwf_X\oplus\Jwf_Y}{\Iwf_X\oplus\Iwf_Y}=
\min\{\addJ{\Jwf_X}{\Iwf_X},\addJ{\Jwf_Y}{\Iwf_Y}\},\\
&\cofJ{\Jwf_X\oplus\Jwf_Y}{\Iwf_X\oplus\Iwf_Y}=
\max\{\cofJ{\Jwf_X}{\Iwf_X},\cofJ{\Jwf_Y}{\Iwf_Y}\}.
\end{align*}
\end{lemma}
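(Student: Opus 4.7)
The plan is to establish the Tukey equivalence by exhibiting an order-preserving bijection between $(\Iwf_X\oplus\Iwf_Y,\subseteq^{\Jwf_X\oplus\Jwf_Y})$ and the product preorder $(\Iwf_X,\subseteq^{\Jwf_X})\times(\Iwf_Y,\subseteq^{\Jwf_Y})$. Specifically, I would introduce the canonical map $\Phi\colon\Iwf_X\oplus\Iwf_Y\to\Iwf_X\times\Iwf_Y$ defined by $\Phi((I\times\{0\})\cup(J\times\{1\}))=(I,J)$, with inverse $\Psi\colon(I,J)\mapsto(I\times\{0\})\cup(J\times\{1\})$. These are mutually inverse bijections, so it suffices to verify that both are order preserving.

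The crux of the proof is a short set-theoretic computation. For $A=(I\times\{0\})\cup(J\times\{1\})$ and $A'=(I'\times\{0\})\cup(J'\times\{1\})$ in $\Iwf_X\oplus\Iwf_Y$, one has
\[
A\setminus A'=((I\setminus I')\times\{0\})\cup((J\setminus J')\times\{1\}),
\]
so by the definition of $\Jwf_X\oplus\Jwf_Y$ this difference belongs to $\Jwf_X\oplus\Jwf_Y$ if and only if $I\setminus I'\in\Jwf_X$ and $J\setminus J'\in\Jwf_Y$. Hence
\[
A\subseteq^{\Jwf_X\oplus\Jwf_Y}A' \quad\Longleftrightarrow\quad I\subseteq^{\Jwf_X}I'\ \text{and}\ J\subseteq^{\Jwf_Y}J',
\]
which is precisely the statement that $\Phi$ (and, symmetrically, $\Psi$) is an isomorphism between the given preorder on $\Iwf_X\oplus\Iwf_Y$ and the product preorder.

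From this order isomorphism, the pairs $(\Phi,\Psi)$ and $(\Psi,\Phi)$ trivially become Tukey connections in both directions, yielding the claimed Tukey equivalence. The formulas for $\addJ{\Jwf_X\oplus\Jwf_Y}{\Iwf_X\oplus\Iwf_Y}$ and $\cofJ{\Jwf_X\oplus\Jwf_Y}{\Iwf_X\oplus\Iwf_Y}$ then follow by applying \autoref{prod} to the product side. There is no substantive obstacle here; the entire content of the lemma reduces to the componentwise decomposition of $A\setminus A'$ together with the componentwise definition of $\Jwf_X\oplus\Jwf_Y$.
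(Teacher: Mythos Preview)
Your proof is correct and essentially matches the paper's approach: the paper simply remarks that the argument is analogous to \autoref{product}\ref{product-c}, and your explicit componentwise computation of $A\setminus A'$ is exactly what that analogy amounts to. If anything, your presentation via the canonical order isomorphism $\Phi$ is slightly cleaner than constructing Tukey connections separately, since for the disjoint sum the decomposition $(I,J)$ is unique (unlike in \autoref{product}\ref{product-c}, where a choice of splitting is needed).
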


\begin{proof}
The proof is similar to that of \autoref{product}~\ref{product-c}.
\end{proof}

Now we are ready to show that the diagram \autoref{incRelCof} is complete, i.e., no arrow can be added in general.

\begin{theorem}\label{diagCountEx}
\startlist
\begin{enumerate}[label=\normalfont(\alph*)]
\item\label{diagCountExa}
There exist an infinite set $X$ and ideals $\Iwf$, $\Jwf$ on $X$ with\/ $\non{\Iwf}<\cofJ{\Jwf}{\Iwf}$.
\item\label{diagCountExb}
There exist an infinite set $X$ and ideals $\Iwf$, $\Jwf$ on $X$ with\/ $\cofJ{\Jwf}{\Iwf}<\non{\Iwf}$.
\item\label{diagCountExc}
There exist an infinite set $X$ and ideals $\Iwf$, $\Jwf$ on $X$ with\/ $\cov{\Iwf}<\cofJ{\Jwf}{\Iwf}$.
\item\label{diagCountExd}
There exist an infinite set $X$ and ideals $\Iwf$, $\Jwf$ on $X$ with\/ $\cofJ{\Jwf}{\Iwf}<\cov{\Iwf}$.
\end{enumerate}
\end{theorem}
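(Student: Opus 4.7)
The plan is to exhibit explicit examples witnessing each of the four non-implications; two examples will suffice.

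\smallskip

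For items (b) and (d) I would take $X=\omega$ and $\Iwf=\Jwf=\fin$. By \autoref{basicPropcofJ}\ref{basicPropcofJa} we get $\cofJ{\fin}{\fin}=1$, while $\non{\fin}=\cov{\fin}=\aleph_0>1$. This single choice witnesses both $\cofJ{\Jwf}{\Iwf}<\non{\Iwf}$ and $\cofJ{\Jwf}{\Iwf}<\cov{\Iwf}$, so (b) and (d) are handled simultaneously.

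\smallskip

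For items (a) and (c) I would take $X=\omega\times\omega$, $\Iwf=\ofin$, and $\Jwf=\fin$. First, one notes the routine facts $\non{\ofin}=\cov{\ofin}=\aleph_0$: the countable set $\{0\}\times\omega$ is not in $\ofin$, and the family $\set{\omega\times\{n\}}{n\in\omega}\subseteq\ofin$ covers $\omega\times\omega$. The substantive step is to prove $\cofJ{\fin}{\ofin}\geq\dfrak$. I would build a Tukey connection $(\baire,{\leq^*})\leqT(\ofin,{\subseteq^{\fin}})$ by setting $\Psi_-(f)=\set{(n,m)}{m<f(n)}\in\ofin$ and $\Psi_+(A)(n)=1+\max(A_n)$ (with $\max\emptyset=-1$). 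If $\Psi_-(f)\subseteq^{\fin}A$, then $\set{(n,m)}{m<f(n)\ \land\ m\notin A_n}$ is finite, so for all but finitely many $n$ one has $\{0,\dots,f(n)-1\}\subseteq A_n$, whence $f(n)\leq\Psi_+(A)(n)$ for cofinitely many $n$, i.e.\ $f\leq^*\Psi_+(A)$. Since $\dfrak\geq\aleph_1$ in $\ZFC$, this gives $\cofJ{\fin}{\ofin}\geq\aleph_1>\aleph_0=\non{\ofin}=\cov{\ofin}$, settling (a) and (c).

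\smallskip

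The only real pitfall is correctly orienting the Tukey connection so that it yields $\dfrak\leq\cofJ{\fin}{\ofin}$ rather than the reverse inequality; once the direction is pinned down, the verification is essentially one line, and everything else follows from \autoref{basicPropcofJ}.
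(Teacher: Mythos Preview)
Your proof is correct as a proof of the theorem as literally stated, but it takes a genuinely different route from the paper, especially for (b) and (d).

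For (a) and (c) the approaches are close in spirit: the paper uses $\Iwf=\ED$, $\Jwf=\fin$ and quotes Meza-Alc\'antara's $\cofJ{*}{\ED}=\cfrak$, whereas you use $\Iwf=\ofin$ and build the Tukey connection $(\baire,{\leq^*})\leqT(\ofin,{\subseteq^*})$ by hand. Both examples have $\non{\Iwf}=\cov{\Iwf}=\aleph_0$; your argument is self-contained while the paper's is a one-line citation.

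For (b) and (d) the difference is substantive. You take $\Iwf=\Jwf=\fin$ and use $\cofJ{\Jwf}{\Iwf}=1$, which is correct but exploits the degenerate case. The paper instead works on $X=\R$ and assembles ideals via $\vee$ and $\oplus$ (using \autoref{product}\ref{product-c} and \autoref{disSum}) to obtain examples with $\cofJ{\Jwf}{\Iwf}=\omega$, i.e., non-trivial relative cofinality. The point is that the diagram in \autoref{incRelCof} already contains the arrow $\add{\Iwf}\to\cofJ{\Jwf}{\Iwf}$, which by \autoref{add<=cof} only holds when $\cofJ{\Jwf}{\Iwf}>1$; so the diagram is implicitly read in the non-trivial regime. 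Your examples for (b) and (d) simultaneously violate that arrow as well, so they do not by themselves establish that the diagram is complete \emph{within} its intended range. The paper's heavier constructions buy exactly this: the four non-implications hold even when $\cofJ{\Jwf}{\Iwf}\geq\add{\Iwf}$. If that stronger conclusion is wanted, you would need to replace your trivial witnesses for (b) and (d) by ones with $\cofJ{\Jwf}{\Iwf}\geq\omega$.
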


\begin{proof}
\ref{diagCountExa}: Put $X=\omega$, $\Iwf=\ED$, and $\Jwf=\fin$.
Then $\cofJ{*}{\ED}=\cof{\ED}=\cfrak>\non{\ED}=\omega$.

\ref{diagCountExb}: Let $X=\R$ and let $\Iwf_0$, $\Iwf_1$ be such that $\cof{\Iwf_1}<\non{\Iwf_0}$, e.g., consider $\Iwf_0=[\R]^{<\cfrak}$ and let $\Iwf_1=\gen{\set{[a,b]}{a,b\in\R\ \land\ a\leq b}}$.
Then clearly $\non{\Iwf_0}=\cfrak$ and $\cof{\Iwf_1}=\omega$ since any finite union of bounded intervals is contained in some interval with rational endpoints.
Also note that $\Iwf_0\vee\Iwf_1$ exists (since complement of any element of $\Iwf_1$ has cardinality $\cfrak$) and $\Iwf_0\cap\Iwf_1\subseteq\Iwf_0$.
Then, applying \autoref{product}\ref{product-c} and \autoref{basicPropcofJ} we have
\begin{align*}
\cofJ{\Iwf_0}{\Iwf_0\vee\Iwf_1}&=\max\{\cofJ{\Iwf_0}{\Iwf_0},\cofJ{\Iwf_0}{\Iwf_1}\}=\max\{1,\cofJ{\Iwf_0}{\Iwf_1}\}=\cofJ{\Iwf_0}{\Iwf_1}\\
&\leq\cof{\Iwf_1}<\non{\Iwf_0}\leq\non{\Iwf_0\vee\Iwf_1}.
\end{align*}

\ref{diagCountExc}:
Take the same example as in \ref{diagCountExa}.

\ref{diagCountExd}:
Let $X=\R$ and let $\Iwf_0$, $\Iwf_1$ be such that $\cofJ{*}{\Iwf_1}=\cof{\Iwf_1}<\cov{\Iwf_0}$, e.g., $\Iwf_0=\fin$ and $\Iwf_1=\gen{\set{[a,b]}{a,b\in\R\ \land\ a\leq b}}$.
Then clearly $\cov{\Iwf_0}=\cfrak$.
Since $\cof{\Iwf_1}=\omega$, also $\cofJ{*}{\Iwf_1}=\omega$ because for any $J\in\Iwf_1$ there is an interval $[c,d]$, $c<d$ disjoint with~$J$, thus $[c,d]\setminus J$ is infinite and consequently $\cofJ{*}{\Iwf_1}\geq\omega$.
Then applying \autoref{disSum} and \autoref{disjSumProp}\ref{disjSumPropb} we have
\begin{flalign*}
&&\cofJ{\Iwf_0\oplus\fin}{\Iwf_0\oplus\Iwf_1}&=\cofJ{\fin\oplus\fin}{\fin\oplus\Iwf_1}=\max\{\cofJ{*}{\fin},\cofJ{*}{\Iwf_1}\}=\cofJ{*}{\Iwf_1}
\\
&&&<\cov{\Iwf_0}=
\max\{\cov{\Iwf_0},\cov{\Iwf_1}\}=\cov{\Iwf_0\oplus\Iwf_1}.
&&\llap{$\qed$}
\end{flalign*}
\let\qed\relax
\end{proof}

\section{Ideals on \texorpdfstring{$\omega$}{omega}}\label{SecIdealsOnOmega}

In this section we are going to study particular instances of ideals on $\omega$.
In fact, we are going to fill missing values of Table~2 in \cite{tatra}, i.e., we are going to prove the results marked with cyan color in \autoref{completeTablecofJ}.

First, let us recall the definitions of the studied ideals.
\[
\begin{array}{l@{{}={}}l}
\fin&[\omega\times\omega]^{<\omega},\\[2pt]
\fino&\set{I\subseteq\omega\times\omega}
{\forall^\infty n\in\omega\ \set{m}{(n,m)\in I}=\emptyset},\\[2pt]
\ofin&\set{I\subseteq\omega\times\omega}
{\forall n\in\omega\ \set{m}{(n,m)\in I}\text{ is finite}},\\[2pt]
\finfin&\set{I\subseteq\omega\times\omega}
{\forall^\infty n\in\omega\
\set{m}{(n,m)\in I}\text{ is finite}},\\[2pt]
\sel&\set{I\subseteq\omega\times\omega}
{\exists k\in\omega\
\forall n\in\omega\ \abs{\set{m}{(n,m)\in I}}<k},\\[2pt]
\ED&\set{I\subseteq\omega\times\omega}
{\exists k\in\omega\
\forall^\infty n\in\omega\ \abs{\set{m}{(n,m)\in I}}<k}\\[2pt]
&\set{I\subseteq\omega\times\omega}
{\limsup\limits_{n\to\infty}\abs{(\{n\}\times\omega)\cap I}<\infty}.
\end{array}
\]

We will need the following results due to Meza-Alc\'antara \cite{meza}.

\begin{theorem}[D. Meza-Alc\'antara]
\startlist
\begin{enumerate}[label=\normalfont(\alph*)]
\item
$\cofJ{*}{\sel}=\cofJ{*}{\ED}=\cfrak$.
\item
$\cofJ{*}{\ofin}=\cofJ{*}{\finfin}=\dfrak$.
\item
$\cofJ{*}{\fino}=\omega$.
\qed
\end{enumerate}
\end{theorem}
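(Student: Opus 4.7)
For (c), the family $\{F\times\omega:F\in[\omega]^{<\omega}\}$ is $\subseteq$-cofinal (and hence $\subseteq^*$-cofinal) in $\fino$, giving $\cofJ{*}{\fino}\leq\omega$. Conversely, $\{n\}\times\omega\in\fino$ for every $n\in\omega$, while any single $A\in\fino$ has finite support and hence cannot $\subseteq^*$-contain $\{n\}\times\omega$ for $n$ outside that support, so $\cofJ{*}{\fino}>1$. Then \autoref{basicPropcofJ}\ref{basicPropcofJd} yields $\cofJ{*}{\fino}=\omega$.

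For (b), I would establish a Tukey equivalence $(\ofin,\subseteq^*)\eqT(\omega^\omega,\leq^*)$ via the assignments $\varphi(I)(n)=\max(\{m+1:(n,m)\in I\}\cup\{0\})$ and $\psi(f)=\{(n,m):m<f(n)\}$, checking that $I\subseteq^*\psi(f)\Leftrightarrow\varphi(I)\leq^* f$; this gives $\cofJ{*}{\ofin}=\dfrak$. For $\finfin$, each $I$ splits as $I\setminus(F_I\times\omega)\in\ofin$ plus the part supported on the finite set $F_I=\{n:|I\cap(\{n\}\times\omega)|=\omega\}$, so the family $\{\psi(g_\alpha)\cup(F\times\omega):\alpha<\dfrak,\,F\in[\omega]^{<\omega}\}$ based on a dominating $\{g_\alpha\}_{\alpha<\dfrak}\subseteq\omega^\omega$ is $\subseteq^*$-cofinal in $\finfin$ and has size $\dfrak$. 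Conversely, any $\subseteq^*$-cofinal $\Awf\subseteq\finfin$ truncates to the $\subseteq^*$-cofinal family $\{A\setminus(F_A\times\omega):A\in\Awf\}\subseteq\ofin$ (since for $I\in\ofin$ covered mod finite by $A\in\Awf$ the set $I\cap(F_A\times\omega)$ is automatically finite), giving $|\Awf|\geq\cofJ{*}{\ofin}=\dfrak$.

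For (a), the upper bounds $\cofJ{*}{\sel},\cofJ{*}{\ED}\leq\cfrak$ follow from $|\sel|=|\ED|=\cfrak$ via \autoref{basicPropcofJ}\ref{basicPropcofJb}. For the lower bounds I would invoke \autoref{RCdepOnCof}\ref{RCdepOnCof2} with $\Jwf=\fin=[\omega\times\omega]^{<\omega}$: since $\cof{\fin}=\omega$, once one establishes $\cof{\sel}=\cof{\ED}=\cfrak$ the conclusion $\cofJ{*}{\sel}=\cofJ{*}{\ED}=\cfrak$ follows. The combinatorial core, and what I expect to be the main obstacle, is the bound $\cof{\sel}\geq\cfrak$. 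A natural plan is to associate to each $f\in\omega^\omega$ the selector $I_f=\{(n,f(n)):n\in\omega\}\in\sel$: on the sub-ideal $\{A\in\sel:|A_n|\leq 1\text{ for all }n\}$ of partial functions, the relation $I_f\subseteq^*A$ forces $A=^*f$, so already in this sub-ideal a $\subseteq^*$-cofinal subfamily has size at least the number of $=^*$-classes of $\omega^\omega$, namely $\cfrak$. The difficulty in extending to the full ideal $\sel$ is that a wider witness $A$ with $|A_n|\geq 2$ can $\subseteq^*$-capture many $=^*$-classes of selectors simultaneously; the meager ``slalom traces'' $T_A=\{f:f(n)\in A_n\text{ for almost all }n\}$ cover $\omega^\omega$ only to the depth of $\cov{\Mwf}$ via a Baire-category argument, so the sharpening to $\cfrak$ carried out in [meza] must exploit the unbounded variation of the column widths $k_A$ across any candidate base.
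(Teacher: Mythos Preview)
The paper does not give its own proof of this theorem: it is quoted as a result of Meza-Alc\'antara and closed with a \qedsymbol, so there is nothing to compare against. Your arguments for (b) and (c) are correct and standard.

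For (a) there is a genuine gap, and your closing diagnosis points in the wrong direction. The difficulty you isolate is real: a single $A\in\sel$ with bound $k$ can $\subseteq^*$-cover continuum many graphs of selectors, so the map $f\mapsto I_f$ from $\baire$ does not by itself give the lower bound $\cfrak$, and the Baire-category count you sketch stalls at $\cov{\Mwf}$. But the fix is not to exploit ``unbounded variation of the column widths $k_A$ across any candidate base''; it is to choose the selectors more carefully. Take a family $\{f_r:r\in 2^\omega\}\subseteq\baire$ of pairwise \emph{eventually different} functions, e.g.\ $f_r(n)=$ the code of $r{\restriction}n$ in $2^n$. If $I_{f_{r_1}},\dots,I_{f_{r_j}}$ are all $\subseteq^*A$ for some $A\in\sel$ with bound $k$, then for all sufficiently large $n$ the values $f_{r_1}(n),\dots,f_{r_j}(n)$ are $j$ distinct elements of the column $A_n$, forcing $j<k$. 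Hence each $A\in\sel$ captures only \emph{finitely many} of the $I_{f_r}$, and any $\subseteq^*$-cofinal $\Awf\subseteq\sel$ must have $|\Awf|\geq\cfrak$. The same family works for $\ED$: if $A\in\ED$, then $A\setminus(F\times\omega)\in\sel$ for some finite $F$, and $I_{f_r}\subseteq^*A$ iff $I_{f_r}\subseteq^*A\setminus(F\times\omega)$ since $I_{f_r}\cap(F\times\omega)$ is finite. With this in hand, your reduction via \autoref{RCdepOnCof}\ref{RCdepOnCof2} (or a direct argument) goes through.
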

The last observation before we proceed with the main result of this section is the following.
Note that $\cofJ{\Jwf}{\Iwf}=1$ immediately implies that $\Iwf$ is a $\pid{\Jwf}$-ideal.
Hence we have

\begin{fact}\label{cofJPJ}
If $\Iwf$ is not a $\pid{\Jwf}$-ideal, then\/ $\cofJ{\Jwf}{\Iwf}\geq\add{\Iwf}\geq\omega$.
\qed
\end{fact}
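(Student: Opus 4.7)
The plan is to derive this as a direct consequence of the dichotomy result in \autoref{basicPropcofJ}\ref{basicPropcofJd} (strengthened via \autoref{add<=cof}), combined with the characterization of triviality in \autoref{orthimpliescofJ1}.

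First, I would argue the contrapositive of the hypothesis at the $\cofJ{\Jwf}{\Iwf}>1$ level. Suppose $\cofJ{\Jwf}{\Iwf}=1$; then by \autoref{orthimpliescofJ1}\ref{orthimpliescofJ1a} there exists a single set $A\in\Iwf$ with $I\subseteq^\Jwf A$ for every $I\in\Iwf$. This single $A$ witnesses the $\pid{\Jwf}$-property for \emph{any} sequence $\{I_n:n\in\omega\}\subseteq\Iwf$, so $\Iwf$ is a $\pid{\Jwf}$-ideal. Contrapositively, the hypothesis that $\Iwf$ is not a $\pid{\Jwf}$-ideal forces $\cofJ{\Jwf}{\Iwf}>1$.

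Next, I would plug this into \autoref{add<=cof}, which asserts precisely that whenever $\cofJ{\Jwf}{\Iwf}>1$ we have $\omega\leq\add{\Iwf}\leq\cofJ{\Jwf}{\Iwf}$. This immediately yields the claimed chain $\cofJ{\Jwf}{\Iwf}\geq\add{\Iwf}\geq\omega$, noting that $\add{\Iwf}\geq\omega$ also follows on its own from closure of an ideal under finite unions.

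There is no real obstacle here; the content is entirely packaged in the two earlier statements, and the only work is observing that a one-element cofinal family in $(\Iwf,\subseteq^\Jwf)$ trivially provides a $\Jwf$-upper bound for every countable subfamily of $\Iwf$.
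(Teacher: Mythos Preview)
Your proposal is correct and matches the paper's approach exactly: the paper simply notes in the sentence preceding the fact that $\cofJ{\Jwf}{\Iwf}=1$ immediately implies $\Iwf$ is a $\pid{\Jwf}$-ideal, and then marks the fact with \qed, leaving the reader to invoke \autoref{add<=cof} (or \autoref{improvedDich}) for the chain $\cofJ{\Jwf}{\Iwf}\geq\add{\Iwf}\geq\omega$. Your only superfluous step is the appeal to \autoref{orthimpliescofJ1}\ref{orthimpliescofJ1a}; the existence of a single $A\in\Iwf$ with $I\subseteq^\Jwf A$ for all $I\in\Iwf$ is literally the definition of $\cofJ{\Jwf}{\Iwf}=1$.
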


Now we can fill the empty spaces of Table~2 in \cite{tatra} (cyan values in \autoref{completeTablecofJ}), i.e., the cases when $\Iwf$ is not a $\pid{\Jwf}$-ideal.

\begin{theorem}\label{tablecompletion}
\startlist
\begin{enumerate}[label=\normalfont(\alph*)]
\item\label{tca}
$\cofJ{\ofin}{\fino}=\cofJ{\sel}{\fino}=\omega$.
\item\label{tcb}
$\cofJ{\ofin}{\finfin}=\omega$ and\/ $\cofJ{\sel}{\finfin}=\dfrak$.
\item\label{tcc}
$\cofJ{\fino}{\sel}=\cfrak$.
\item\label{tcd}
$\cofJ{\fino}{\ED}=\cfrak$ and\/ $\cofJ{\ofin}{\ED}=\cofJ{\sel}{\ED}=\omega$.
\end{enumerate}
\end{theorem}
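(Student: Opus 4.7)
My plan is to reduce each of the four items to the general principles from Section~3 plus one single combinatorial argument. All upper bounds are free: every entry in (a) is $\leq\cof{\fino}=\omega$; the $\dfrak$ and $\cfrak$ entries in (b)--(d) come from monotonicity \autoref{basicPropcofJ}\ref{basicPropcofJc} together with Meza-Alc\'antara's $\cofJ{*}{\finfin}=\dfrak$ and $\cofJ{*}{\sel}=\cofJ{*}{\ED}=\cfrak$; and the two $\omega$-entries in (b) and (d) are certified by the explicit dominating family $\set{n\times\omega}{n\in\omega}$, since any $I\in\finfin$ (resp.\ $\ED$) has all but finitely many columns finite (resp.\ bounded by some $k$), so stripping the first $N$ columns for $N$ past the ``bad'' columns leaves a~set in $\ofin$ (resp.\ $\sel$).

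For the lower bounds of the form $\geq\omega$, I will invoke \autoref{cofJPJ}. The columns $\set{\{n\}\times\omega}{n\in\omega}$ lie in $\fino\cap\finfin$, and any $\ofin$-union of them inside $\fino$, $\finfin$, or $\ED$ would have every column cofinite, contradicting membership in any of these ideals; monotonicity of the $\pid{\Jwf}$-property in $\Jwf$ then yields the corresponding failures of $\pid{\sel}$. For the $\omega$-lower bound on $\cofJ{\fino}{\ED}$ I switch to the rows $\set{\omega\times\{m\}}{m\in\omega}\subseteq\ED$: a~$\fino$-union in $\ED$ would have every row cofinite, so picking $k+1$ rows eventually forces $\geq k+1$ elements per column, contradicting $\ED$. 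The two $\cfrak$-bounds in (c) and (d) then drop out of \autoref{basecompletion}: for (c) the identity $\fino\cap\sel=\fin$ (a~bounded-column set with finite support is finite) together with \autoref{basecompletion}\ref{intersectionIsEnough} gives $\cofJ{\fino}{\sel}=\cofJ{*}{\sel}=\cfrak$; for (d), \autoref{basecompletion}\ref{basecompletionc} applied with $\Jwf=\fino\subseteq\ED$ and $\cof{\fino}=\omega$ yields $\cof{\ED}\leq\cofJ{\fino}{\ED}\cdot\omega$, which combined with $\cofJ{\fino}{\ED}\geq\omega$ forces $\cofJ{\fino}{\ED}\geq\cof{\ED}=\cfrak$.

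The only step demanding genuine combinatorics, and the one I expect to occupy most of the proof, is the lower bound $\cofJ{\sel}{\finfin}\geq\dfrak$. Given $\Awf\subseteq\finfin$ with $|\Awf|<\dfrak$, the plan is to associate to each $A\in\Awf$ the function $f_A(n)=\max(A\cap(\{n\}\times\omega))+1$ at the cofinitely many columns where this is finite (and $0$ at the remaining finitely many ``bad'' columns). Since $\{f_A:A\in\Awf\}$ is not $\leq^*$-dominating, I can fix $h\in\baire$ with $\exists^\infty n\ h(n)>f_A(n)$ for every $A\in\Awf$. Set
\[
J_h=\set{(n,m)}{h(n)\leq m<h(n)+n}\in\ofin\subseteq\finfin.
\]
At every good column $n$ with $h(n)>f_A(n)$, the block $[h(n),h(n)+n)$ is disjoint from $A\cap(\{n\}\times\omega)$, so the $n$-th column of $J_h\setminus A$ has exactly $n$ elements; since infinitely many such $n$ occur, $J_h\setminus A$ has unbounded column sizes and lies outside $\sel$. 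Hence $\Awf$ fails to be $\subseteq^{\sel}$-cofinal in $\finfin$, which gives the desired lower bound.
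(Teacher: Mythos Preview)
Your proof is correct. The main differences from the paper's argument are in how you handle the $\omega$-valued entries in (b) and (d) and the lower bound $\cofJ{\sel}{\finfin}\geq\dfrak$.

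For $\cofJ{\ofin}{\finfin}$ and $\cofJ{\sel}{\ED}$ (and implicitly $\cofJ{\ofin}{\ED}$) the paper does not produce an explicit witness family nor an explicit non-$\pid{\Jwf}$ argument; instead it uses the decomposition $\finfin=(\fino)\vee(\ofin)$ and $\ED=(\fino)\vee\sel$ together with \autoref{product}\ref{product-c}, reducing each computation to a $\max$ of two already-known entries (one of which is~$1$ and the other is the $\omega$ from part~(a)). Your direct approach with the family $\{n\times\omega\}$ and the column/row witnesses is equally valid and perhaps more transparent, at the cost of not exploiting the algebraic machinery of Section~3.

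For $\cofJ{\sel}{\finfin}\geq\dfrak$, the paper's argument is a single line: since $\sel\subseteq\ED$, monotonicity (\autoref{basicPropcofJ}\ref{basicPropcofJc}) gives $\cofJ{\sel}{\finfin}\geq\cofJ{\ED}{\finfin}$, and the value $\cofJ{\ED}{\finfin}=\dfrak$ is already in the table from~\cite{tatra}. Your combinatorial argument via the sets $J_h$ is a genuine alternative: it is self-contained (it does not rely on the $\ED$-entry being known in advance) and in effect reproves that entry along the way. The paper's route is shorter precisely because it leans on the prior result; yours buys independence from it.
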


\begin{proof}
\ref{tca}: Since $\fino$ is neither a $\pid{\ofin}$-ideal nor a $\pid{\sel}$-ideal, it follows from the fact \autoref{cofJPJ} that both invariants are at least $\omega$.
By \autoref{basicPropcofJ}\ref{basicPropcofJb} we have that both are also at most $\omega$.

\ref{tcb}: Since $\finfin=(\fino)\vee(\ofin)$ and $(\fino)\cap(\ofin)=[\omega\times\omega]^{<\omega}\subseteq\ofin$, applying \autoref{product}\ref{product-c} we have
\[
\cofJ{\ofin}{\finfin}=\max\{\cofJ{\ofin}{\fino},\cofJ{\ofin}{\ofin}\}.
\]
It follows from \ref{tca} that this is equal to $\max\{\omega,1\}=\omega$.

$\cofJ{\sel}{\finfin}\leq\dfrak$ follows from \autoref{basicPropcofJ}\ref{basicPropcofJb}.
$\cofJ{\sel}{\finfin}\geq\dfrak$ follows from \autoref{basicPropcofJ}\ref{basicPropcofJc} because $\sel\subseteq\ED$ and by \autoref{completeTablecofJ} we have $\cofJ{\ED}{\finfin}=\dfrak$.

\ref{tcc}: By \autoref{basecompletion}\ref{intersectionIsEnough}, $\cofJ{\fino}{\sel}=\cofJ{(\fino)\cap\sel}{\sel}=\cofJ{*}{\sel}=\cfrak$.

\ref{tcd}: Since $\fino\subseteq\ED$ and $\omega=\cof{\fino}<\cof{\ED}=\cfrak$, applying \autoref{RCdepOnCof} yields the result.

$\cofJ{\ofin}{\ED}=\cofJ{\ED\cap(\ofin)}{\ED}=\cofJ{\sel}{\ED}$.
Again, using \autoref{product}\ref{product-c} we have
\[
\cofJ{\sel}{\ED}=\max\{\cofJ{\sel}{\fino},\cofJ{\sel}{\sel}\}.
\]
By \ref{tca} this is equal to $\max\{\omega,1\}=\omega$.
\end{proof}

\section{Ideals on \texorpdfstring{$[0,1]$}{[0,1]}}\label{SecIdealsOnUnit}

In this section we shift our investigation to an uncountable Polish space, namely, the unit interval $\unit$ endowed with measure and topology inherited from $\R$, i.e., the standard Lebesgue measure and the standard topology induced by Euclidean metric.

Recall that a metric space $(X,d)$ has \textit{strong measure zero} if for every sequence $\seq{\varepsilon_n}{n\in\omega}$ of positive reals there exists a sequence $\seq{X_n}{n\in\omega}$ such that for every $n$, $X_n$ has diameter~$<\varepsilon_n$, and $X\subseteq\bigcup_{n\in\omega}X_n$.

We will study the $\mathrm{P}(\Jwf)$ property and the relative cofinality for pairs of the following well-known ideals.
\begin{align*}
\Nwf&=\set{A\subseteq\unit}{\mu(A)=0},\\
\Mwf&=\set{A\subseteq\unit}{A \text{ is meager}},\\
\Ewf&=\set{A\subseteq\unit}{A \text{ is covered by some null }F_\sigma\text{ subset of }\unit},\\
\SN&=\set{A\subseteq\unit}{A\text{ has strong measure zero}},\\
\nwd&=\set{A\subseteq\unit}{A\text{ is nowhere dense}}.
\end{align*}

Since $\Nwf$, $\Mwf$, $\Ewf$ and $\SN$ are $\sigma$-ideals, they are also $\mathrm{P}$-ideals.
Thus, the only non-trivial case regarding the $\pid{\Jwf}$ property is the nowhere dense ideal.
By $\nwd\subseteq\Mwf$ we have that $\nwd$ is a $\pid{\Mwf}$-ideal.
Next, we are going to investigate the $\pid{\Jwf}$ property of $\nwd$ when $\Jwf=\Nwf$, $\Ewf$, $\SN$.
We will need the following facts.

\begin{fact}[folklore]\label{measureofnwd}
If $A\subseteq\unit$ is a measurable nowhere dense set, then $\mu(A)<1$.
Otherwise there would exist a nonempty open set of measure zero.
\qed
\end{fact}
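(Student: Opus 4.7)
The plan is to argue by contradiction. Suppose $A\subseteq[0,1]$ is measurable, nowhere dense, and $\mu(A)=1$. Unfolding the definition of nowhere dense, the closure $\overline{A}$ has empty interior, so the set $U=[0,1]\setminus\overline{A}$ is open (and in fact dense in $[0,1]$, though density is not needed here). Since $A\subseteq\overline{A}$ and $\overline{A}$ is closed, hence measurable, monotonicity of $\mu$ yields $\mu(\overline{A})\ge\mu(A)=1$, so $\mu(\overline{A})=1$ and therefore $\mu(U)=0$.

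The desired contradiction then comes from the fact that $U$ cannot be simultaneously a nonempty open subset of $[0,1]$ and have Lebesgue measure zero. To see this, note that $\overline{A}$ has empty interior in $[0,1]$, so $U\neq\emptyset$; any nonempty open subset of $[0,1]$ contains an open interval $(a,b)$ with $a<b$, which has Lebesgue measure $b-a>0$. Thus $\mu(U)>0$, contradicting $\mu(U)=0$.

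There is essentially no obstacle here: the argument is a one-line application of the standard equivalence ``$A$ nowhere dense $\iff$ $\overline{A}$ has empty interior'' combined with the monotonicity of Lebesgue measure and the elementary fact that nonempty open subsets of $[0,1]$ have strictly positive measure. The hint already included in the statement (``otherwise there would exist a nonempty open set of measure zero'') is exactly the contradiction extracted above, so no additional machinery is required.
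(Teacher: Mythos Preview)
Your proof is correct and follows exactly the approach indicated by the paper: the statement is recorded as a folklore fact with no proof beyond the parenthetical hint ``otherwise there would exist a nonempty open set of measure zero,'' and your argument simply unpacks that hint in the natural way.
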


\begin{lemma}\label{sigmaP}
Let $\Iwf$ be an ideal on $X$ and $\Jwf$ be a $\sigma$-ideal on $X$.
Then $\Iwf$ is a\/ $\pid{\Jwf}$-ideal if and only if for any\/ $\set{I_n}{n\in\omega}\subseteq\Iwf$ there is $I\in\Iwf$ such that\/
$\left(\bigcup_{n\in\omega}I_n\right)\setminus I\in\Jwf$.
\end{lemma}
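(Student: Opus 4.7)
The plan is to prove both implications directly from the definitions, using the set-theoretic identity
\[
\left(\bigcup_{n\in\omega}I_n\right)\setminus I
=\bigcup_{n\in\omega}(I_n\setminus I)
\]
as the pivot. Neither direction requires any construction of auxiliary sets; the witness $I\in\Iwf$ transfers verbatim between the two formulations.

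For the forward direction ($\Rightarrow$), I would fix a sequence $\{I_n:n\in\omega\}\subseteq\Iwf$ and invoke the $\pid{\Jwf}$-property to obtain $I\in\Iwf$ with $I_n\setminus I\in\Jwf$ for every $n$. Rewriting $(\bigcup_n I_n)\setminus I$ via the identity above, the right-hand side is a countable union of members of $\Jwf$, and here the assumption that $\Jwf$ is a $\sigma$-ideal is exactly what is needed to conclude that this union still lies in $\Jwf$.

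For the backward direction ($\Leftarrow$), given a sequence $\{I_n:n\in\omega\}\subseteq\Iwf$, the hypothesis yields $I\in\Iwf$ with $(\bigcup_n I_n)\setminus I\in\Jwf$. Since $I_n\setminus I\subseteq(\bigcup_n I_n)\setminus I$ for each $n$, and $\Jwf$ is closed under subsets (condition (1) of an ideal; $\sigma$-completeness is not even needed here), each $I_n\setminus I$ belongs to $\Jwf$, witnessing the $\pid{\Jwf}$-property.

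There is essentially no obstacle: the content of the lemma is purely the remark that $\sigma$-completeness of $\Jwf$ allows the $\omega$-many "modulo $\Jwf$" conditions of the $\pid{\Jwf}$-property to be consolidated into a single one regarding $\bigcup_n I_n$. I would present this as a two- or three-line argument and keep the set identity displayed once for clarity.
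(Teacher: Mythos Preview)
Your proposal is correct and matches the paper's proof essentially line for line: the paper dismisses the ``if'' direction as clear (your subset argument) and proves the ``only if'' direction exactly as you do, using $\bigcup_{n\in\omega}(I_n\setminus I)\in\Jwf$ via $\sigma$-completeness of $\Jwf$. Your write-up is slightly more explicit about the set identity and the hereditary property, but the content is identical.
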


\begin{proof}
The ``if'' part is clear.
The ``only if'' part is easy to prove as well.
Let $\set{I_n}{n\in\omega}$ be a sequence of sets in $\Iwf$.
By the assumption we have $I_n\setminus I\in\Jwf$ for each $n\in\omega$.
Since $\Jwf$ is a $\sigma$-ideal, $\bigcup_{n\in\omega}(I_n\setminus I)\in\Jwf$.
\end{proof}

\begin{theorem}\label{nwd}
$\nwd$ is not a\/ $\pid{\Nwf}$-ideal.
\end{theorem}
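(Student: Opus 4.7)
The plan is to apply \autoref{sigmaP}: since $\Nwf$ is a $\sigma$-ideal, it suffices to exhibit a sequence $\seq{I_n}{n\in\omega}\subseteq\nwd$ such that for every $I\in\nwd$ the difference $\bigcup_{n\in\omega}I_n\setminus I$ has positive outer measure. The natural candidate is a sequence of closed nowhere dense sets of large measure whose union covers $[0,1]$ up to a null set.

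Concretely, I would construct for each $n\in\omega$ a fat Cantor set $C_n\subseteq[0,1]$ (a closed, nowhere dense set) with $\mu(C_n)\geq 1-2^{-n}$. Then each $C_n\in\nwd$ and $\mu\left(\bigcup_{n\in\omega}C_n\right)=1$. Set $I_n=C_n$.

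Now I would argue as follows. Let $I\in\nwd$ be arbitrary. Then $\overline{I}$ is also nowhere dense, and being closed it is Lebesgue measurable, so by \autoref{measureofnwd} we have $\mu(\overline{I})<1$. Suppose toward contradiction that $\bigcup_{n\in\omega}C_n\setminus I\in\Nwf$. Then there is a null set $N$ with
\[
\bigcup_{n\in\omega}C_n\subseteq I\cup N\subseteq\overline{I}\cup N,
\]
and taking measures gives $1=\mu\!\left(\bigcup_{n\in\omega}C_n\right)\leq\mu(\overline{I})+\mu(N)=\mu(\overline{I})<1$, a contradiction. Hence $\bigcup_{n\in\omega}C_n\setminus I\notin\Nwf$ for every $I\in\nwd$, and by \autoref{sigmaP} the ideal $\nwd$ is not a $\pid{\Nwf}$-ideal.

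There is no real obstacle here; the only subtle point is that an arbitrary $I\in\nwd$ need not be Lebesgue measurable, which is why one passes to its closure before invoking \autoref{measureofnwd}. The construction of fat Cantor sets with prescribed measure is standard, so no further work is needed there.
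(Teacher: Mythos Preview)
Your proof is correct and essentially identical to the paper's: both take a sequence of nowhere dense sets whose union has full measure, invoke \autoref{measureofnwd} on a closed nowhere dense set, and finish via \autoref{sigmaP}. The only cosmetic difference is that the paper writes ``wlog $A\in\nwd$ closed'' where you explicitly pass to the closure $\overline{I}$, and the paper leaves the existence of the sequence $\seq{A_n}{n\in\omega}$ implicit rather than naming fat Cantor sets.
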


\begin{proof}
Let $\set{A_n}{n\in\omega}\subseteq\unit$ be a sequence of nowhere dense subsets of $\unit$ such that $\mu\left(\bigcup_{n\in\omega}A_n \right)=1$ and let wlog $A\in\nwd$ be closed (measurable).
We have $\mu(A)<1$ and therefore
\[
\mu\left(\left(\bigcup_{n\in\omega}A_n \right)\setminus A\right)\geq
1-\mu(A)>0.
\]
Thus, applying \autoref{sigmaP} we get the desired result.
\end{proof}

\begin{corollary}\label{nwdSN}
$\nwd$ is neither a\/ $\pid{\Ewf}$-ideal nor a\/ $\pid{\SN}$-ideal.
\qed
\end{corollary}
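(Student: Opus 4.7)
The plan is to derive both non-properties as immediate consequences of \autoref{nwd} together with the obvious monotonicity of the $\pid{\cdot}$ property in its parameter. Specifically, I would first record the elementary observation that if $\Jwf\subseteq\Jwf'$ and $\Iwf$ is a $\pid{\Jwf}$-ideal, then $\Iwf$ is a $\pid{\Jwf'}$-ideal, since any witness $I\in\Iwf$ satisfying $I_n\setminus I\in\Jwf$ automatically satisfies $I_n\setminus I\in\Jwf'$. Contrapositively, failure of the $\pid{\Nwf}$ property transfers to failure of the $\pid{\Jwf}$ property for every $\Jwf\subseteq\Nwf$.

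Second, I would verify the two inclusions $\Ewf\subseteq\Nwf$ and $\SN\subseteq\Nwf$. The first is immediate from the definition of $\Ewf$: any set covered by a null $F_\sigma$ set is itself Lebesgue null. The second is the classical fact that every strong measure zero subset of $\unit$ has Lebesgue measure zero, obtained by taking the sequence $\varepsilon_n = 2^{-n}\varepsilon$ in the definition of strong measure zero to cover the set by intervals of total length at most $\varepsilon$.

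Finally, applying the monotonicity observation with $\Jwf'=\Nwf$ and $\Jwf\in\{\Ewf,\SN\}$, the conclusion of \autoref{nwd} yields the desired statement for both ideals. There is essentially no technical obstacle here; the only substantive content beyond \autoref{nwd} is the standard fact $\SN\subseteq\Nwf$, which can simply be cited.
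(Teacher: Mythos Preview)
Your proposal is correct and is precisely the intended argument: the paper states the corollary with a bare \qed\ immediately after \autoref{nwd}, relying on the inclusions $\Ewf\subseteq\Nwf$ and $\SN\subseteq\Nwf$ together with the obvious monotonicity of the $\pid{\cdot}$ property that you spell out.
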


Next, we are going to establish a technical machinery consisting of known results about measure and category.
Afterwards, we will use these facts to evaluate the relative cofinality for each pair of the above-mentioned ideals. Recall that a nonempty closed set $C\subseteq \unit$ is \textit{$\Nwf$-perfect} if $U\cap C\neq\emptyset$ implies $\mu(U\cap C)>0$ for any open set $U\subseteq\unit$. An uncountable set $X\subseteq\unit$ is a~\textit{generalized Luzin set} if for every meager set~$F$, $\abs{F\cap X}<\abs{X}$.

\begin{fact}\label{FACTS}
\startlist
\begin{enumerate}[label=\normalfont(\alph*)]
\item\label{complementOfSMZ}
\emph{(folklore)}
$\SN\not\perp\Mwf$.
\item\label{NMareorth}
\emph{(folklore)}
$\Nwf\perp\Mwf$.
\item\label{measureFactsc}
\emph{(\cite[Lemma 1.1]{balcerzakSigmaIdeals})}
Let $F\subseteq\unit$ be an $\Nwf$-perfect set.
If $A\subseteq F$ is a $G_\delta$ set dense in $F$ then $A\notin \Ewf$.
\item\label{measureFactsd}
\emph{(\cite[Lemma 2]{balcerzakMauldin})}
Every set $A\subseteq\unit$ of type~$F_\sigma$ such that $A\notin \Nwf$ contains an $\Nwf$-perfect set nowhere dense in $A$.
\item\label{perfectSMZ1}
\emph{(folklore)}
No perfect set has strong measure zero.
\item\label{souslin}
\emph{(M.J. Souslin)}
An uncountable analytic subset of a Polish space contains
a perfect subset homeomorphic to the Cantor middle-third set.
\item\label{gLuzin}
\emph{(\cite[Lemma 8.2.4]{bartjud})}
Every generalized Luzin set of regular cardinality is nonmeager
and has strong measure zero.
\qed
\end{enumerate}
\end{fact}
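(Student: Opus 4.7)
My plan is to treat \autoref{FACTS} as a catalogue of standard tools: for the cited items (c), (d), (g) I would appeal directly to the sources, and for the folklore items (a), (b), (e), (f) I would spell out short classical arguments. The heart of the matter is (e), from which (a) will follow cleanly.

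For (b), I would exhibit a concrete null/meager decomposition of $\unit$. Enumerate $\Q\cap\unit=\{q_n:n\in\omega\}$ and set $U_k=\bigcup_{n\in\omega}(q_n-2^{-n-k},q_n+2^{-n-k})$. Each $U_k$ is open and dense with Lebesgue measure at most $2^{1-k}$, so $N:=\bigcap_k U_k$ is a dense $G_\delta$ of measure zero, while $\unit\setminus N$ is a meager $F_\sigma$ of full measure; this witnesses $\Nwf\perp\Mwf$ and simultaneously shows that $\unit$ decomposes as a null set plus a meager set.

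For (e), I would combine the classical fact that uniformly continuous images of $\SN$ sets are in $\SN$ with the trivial observation that $\unit\notin\SN$ (pick $\varepsilon_n=2^{-n-2}$, so $\sum_n 2\varepsilon_n<1$ rules out covering $\unit$ by intervals of the prescribed diameters). Given any perfect $P\subseteq\unit$, $P$ is compact, and a standard Cantor scheme inside $P$ produces a continuous surjection $P\twoheadrightarrow\unit$, automatically uniformly continuous by compactness; were $P$ SMZ, so would be its image $\unit$, a contradiction. Item (a) then follows: if $A\in\SN$ had meager complement, cover $\unit\setminus A$ by a meager $F_\sigma$ set $F$, so that $\unit\setminus F\subseteq A$ is a dense $G_\delta$. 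Such a set is an uncountable Polish subspace with no isolated points (by Baire category applied inside every relatively open slice), hence by the Alexandrov-Hausdorff theorem contains a topological copy of $2^\omega$, i.e. a perfect subset $P\subseteq A$. But then $P\in\SN$, contradicting (e).

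Item (f) is the classical perfect set theorem of Souslin for analytic sets, which I would simply invoke. The only step that demands any care is the closure of $\SN$ under uniformly continuous images in the proof of (e); every other ingredient reduces to routine Lebesgue-measure or Baire-category bookkeeping, or is deferred to the references \cite{balcerzakSigmaIdeals,balcerzakMauldin,bartjud}.
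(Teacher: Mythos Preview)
The paper gives no proof of \autoref{FACTS}: it is presented as a list of known results, each either marked ``folklore'' or attributed to a reference, and closed with a bare \qedsymbol. So there is nothing to compare your proposal against; you are supplying content the authors deliberately omitted.

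That said, your sketches for the folklore items are all correct and standard. The decomposition in (b) is the textbook one. Your route to (e) via ``uniformly continuous images preserve $\SN$'' together with a surjection $P\twoheadrightarrow\unit$ is clean; the only point worth making explicit is how you extend the surjection from the embedded Cantor set $C\subseteq P$ to all of $P$ (Tietze suffices, since $\unit$ is an absolute retract). Your derivation of (a) from (e) is fine: a dense $G_\delta$ in $\unit$ is a perfect Polish space, hence contains a Cantor set, and that contradicts $\SN$ being hereditary. Deferring (c), (d), (g) to the cited sources and (f) to Souslin is exactly what the paper does.
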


Also, we shall need the following lemma.

\begin{lemma}\label{MRLemma}
For every $G_\delta$ dense set $G\subseteq\unit$ there is a nowhere dense $\Nwf$-perfect set $K\subseteq\unit$ such that $K\cap G$ is dense in $K$.
\end{lemma}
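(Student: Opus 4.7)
The plan is to build $K$ as a fat Cantor set in $\unit$ equipped with a countable dense subset that lies in $G$. Initialize $K_\emptyset=[0,1]$ and pick $g_\emptyset\in G\cap(0,1)$, which is nonempty because $G$ is dense in $\unit$. I will recursively construct closed intervals $\set{K_s}{s\in 2^{<\omega}}$ together with designated points $\set{g_s}{s\in 2^{<\omega}}\subseteq G$. Fix a positive sequence $(\ell_n)$ with $\ell_n\to 0$ but $2^n\ell_n$ bounded below, for instance by setting $\ell_{n+1}=\ell_n/2-\varepsilon_n$ where $\varepsilon_n>0$ satisfies $\sum_n 2^{n+1}\varepsilon_n<\ell_0/4$.

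In the inductive step, assume $K_s$ is a closed interval of length $\ell_{|s|}$ with $g_s\in G\cap K_s^\circ$. Let $K_{s^\frown 0}\subseteq K_s$ be a closed subinterval of length $\ell_{|s|+1}$ containing $g_s$ in its interior, and set $g_{s^\frown 0}:=g_s$. Next pick $g_{s^\frown 1}\in G$ inside the nonempty open set $K_s\setminus K_{s^\frown 0}$ (possible by density of $G$), and let $K_{s^\frown 1}\subseteq K_s\setminus K_{s^\frown 0}$ be a closed subinterval of length $\ell_{|s|+1}$ having $g_{s^\frown 1}$ in its interior. Since $\ell_{|s|+1}<\ell_{|s|}/2$, both children fit disjointly inside $K_s$ with a positive gap between them.

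Set $K=\bigcap_{n\in\omega}\bigcup_{|s|=n}K_s$. As $\ell_n\to 0$ and the $K_s$ at each level are separated by positive gaps, any nondegenerate interval of length exceeding $\ell_n$ cannot be contained in $\bigcup_{|s|=n}K_s\supseteq K$, so $K$ has empty interior and is nowhere dense. Iterating the measure recurrence inside $K_s$ yields
\[
\mu(K\cap K_s)=\lim_{k\to\infty}2^k\ell_{|s|+k}=\ell_{|s|}-\sum_{j\geq 0}2^{j+1}\varepsilon_{|s|+j}>0,
\]
and since $\set{K\cap K_s}{s\in 2^{<\omega}}$ is a clopen base of the subspace topology on $K$, every nonempty relatively open subset of $K$ has positive measure; hence $K$ is $\Nwf$-perfect. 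Finally, by construction $g_s=g_{s^\frown 0}=g_{s^\frown 00}=\dots$ is the unique point of $\bigcap_k K_{s^\frown 0^k}\subseteq K$, so $g_s\in K\cap G$ for every $s$; and for any $x\in K$ and $\varepsilon>0$, choosing $s$ with $x\in K_s$ and $\ell_{|s|}<\varepsilon$ gives $|g_s-x|\leq\ell_{|s|}<\varepsilon$, so $K\cap G$ is dense in $K$.

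The only mildly delicate point is the inductive geometric placement: we must simultaneously fit a child interval of length $\ell_{|s|+1}$ around the pre-chosen interior point $g_s$ and a second disjoint child of equal length around a freshly chosen $g_{s^\frown 1}\in G$. The bound $\ell_{|s|+1}<\ell_{|s|}/2$ provides positive slack, and if $g_s$ sits close to an endpoint of $K_s$ one simply slides $K_{s^\frown 0}$ asymmetrically so that $g_s$ still lies in its interior.
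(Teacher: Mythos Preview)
Your proof is correct and follows essentially the same strategy as the paper's: both build a fat Cantor set by nesting closed intervals whose total measure at each level shrinks by a fixed factor bounded away from zero, while threading through a countable set of points of $G$ (your $g_s$, the paper's selectors $s_n(I)$) that are carried forward along one branch at every stage, guaranteeing $K\cap G$ is dense in $K$. The only cosmetic differences are that the paper allows more than two children per interval (which makes the measure bookkeeping via ratios $r_n$ slightly cleaner) and writes $G=\bigcap_nU_n$ explicitly; one small point worth tightening in your write-up is that $g_{s^\frown1}$ should be chosen inside a component of $K_s\setminus K_{s^\frown0}$ of length exceeding $\ell_{|s|+1}$---which you can always arrange, as you note, by sliding $K_{s^\frown0}$ toward one end of $K_s$---rather than anywhere in $K_s\setminus K_{s^\frown0}$.
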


\begin{proof}
Let $G=\bigcap_{n\in\omega} U_n$, where $U_{n+1}\subseteq U_n\subseteq\unit$ are open dense sets (it may happen that $\mu(G)=0$).
Fix a~sequence of positive reals $r_n<1$ such that $\prod_{n\in\omega}r_n>0$.
By induction construct finite families $\Kwf_n$ of pairwise disjoint
nonempty closed intervals,
$\Kwf_{n+1}^I=\set{J\in\Kwf_{n+1}}{J\subseteq I}$ for $I\in\Kwf_n$,
and selectors $s_n\colon\Kwf_n\to G$ for $n\in\omega$ so that the conditions \ref{MRLemma1}--\ref{MRLemma4} are satisfied.
\begin{enumerate}[label=(\roman*)]
\item\label{MRLemma1}
$\Kwf_0=\{I_0\}$ for some $I_0\subseteq U_0$,
\item\label{MRLemma2}
$\forall I\in\Kwf_n$ $\exists J\in\Kwf_{n+1}^I$
$J\subseteq U_{n+1}$ and $s_n(I)=s_{n+1}(J)\in J\subseteq I$,
\item\label{MRLemma3}
$\forall I\in\Kwf_n$ $\abs{\Kwf_{n+1}^I}\geq2$ and
$\mu(\bigcup\Kwf_{n+1}^I)=r_n\cdot\mu(I)$,
\item\label{MRLemma4}
$\Kwf_{n+1}=\bigcup_{I\in\Kwf_n}\Kwf_{n+1}^I$ and
$\forall I\in\Kwf_n$ $\forall J\in\Kwf_{n+1}^I$ $\mu(J)<\mu(I)/2$.
\end{enumerate}
Let $K=\bigcap_{n\in\omega}\bigcup\Kwf_n$.
Clearly, the set $K$~is closed and by
\ref{MRLemma3}--\ref{MRLemma4}, $K$~is nowhere dense and the family
$\set{K\cap I}{I\in\bigcup_{n\in\omega}\Kwf_n}$ is a~base of
relatively clopen subsets of~$K$.
It follows by \ref{MRLemma2} that the set
$Q=\bigcup_{n\in\omega}\set{s_n(I)}{I\in\Kwf_n}$ is a subset of~$G$ as well as a~subset of~$K$.
The set~$Q$ is dense in~$K$ because it meets every clopen subset
of~$K$.
Therefore $K\cap G$ is dense in~$K$.
Finally, $K$ is $\Nwf$-perfect because by \ref{MRLemma3}, for every $n\in\omega$ and $I\in\Kwf_n$,
$\mu(K\cap I)=(\prod_{k>n}r_k)\cdot\mu(I)>0$.
\end{proof}

\begin{theorem}\label{nwdBaireTukey}
$(\baire,{\leq^*})\leqT(\nwd,{\subseteq^\Nwf})$.
Therefore\/
$\addJ{\Nwf}{\nwd}\leq\bfrak$ and\/
$\dfrak\leq\cofJ{\Nwf}{\nwd}$.
\end{theorem}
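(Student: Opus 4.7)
The plan is to construct an explicit Tukey connection $(\Psi_-,\Psi_+)\colon(\baire,{\leq^*})\to(\nwd,{\subseteq^\Nwf})$ by coding a~function $f$ into a~nowhere dense subset of $\unit$ whose Lebesgue ``footprint'' on each of countably many pieces grows with $f(n)$. First I~would fix pairwise disjoint closed intervals $I_n\subseteq\unit$ accumulating at a~single point, say $I_n=[1/(n+2),1/(n+1)]$, and for each $n$ fix an increasing sequence $D_{n,0}\subseteq D_{n,1}\subseteq\cdots$ of closed nowhere dense subsets of~$I_n$ with $\mu(D_{n,k})\nearrow\mu(I_n)$ as $k\to\infty$; a~standard fat Cantor construction inside~$I_n$ gives this. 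Then I~would set
\[
\Psi_-(f)=\{0\}\cup\bigcup_{n\in\omega}D_{n,f(n)}.
\]
Checking $\Psi_-(f)\in\nwd$ is routine: the union is closed because the $I_n$ accumulate only at $0$, and it has empty interior because every nonempty open set either meets only finitely many~$I_n$ (on each of which $D_{n,f(n)}$ is nowhere dense) or contains an entire~$I_n$, which is not contained in the nowhere dense $D_{n,f(n)}$.

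For $\Psi_+$, given $K\in\nwd$ I~would observe that $K\cap I_n$ is nowhere dense in~$I_n$, so $\overline{K\cap I_n}$ is a~closed nowhere dense (hence measurable) subset of~$I_n$, and the rescaling of \autoref{measureofnwd} to~$I_n$ gives $\mu^*(K\cap I_n)\leq\mu(\overline{K\cap I_n})<\mu(I_n)$, where $\mu^*$ denotes Lebesgue outer measure. Define $\Psi_+(K)(n)$ to be the least $k$ with $\mu(D_{n,k})>\mu^*(K\cap I_n)$; the strict inequality together with $\mu(D_{n,k})\to\mu(I_n)$ guarantees that such a~$k$ exists.

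To verify the Tukey condition, assume $\Psi_-(f)\setminus K\in\Nwf$. Then $D_{n,f(n)}\setminus K$ is null for every~$n$, and countable subadditivity of~$\mu^*$ gives
\[
\mu(D_{n,f(n)})\leq\mu^*(D_{n,f(n)}\cap K)+\mu^*(D_{n,f(n)}\setminus K)=\mu^*(D_{n,f(n)}\cap K)\leq\mu^*(K\cap I_n).
\]
The minimality in the definition of $\Psi_+(K)(n)$ then forces $f(n)<\Psi_+(K)(n)$ for every~$n$, so $f\leq\Psi_+(K)$ pointwise and a~fortiori $f\leq^*\Psi_+(K)$. The general inequalities associated with a~Tukey connection (recalled in the preliminaries) immediately yield $\addJ{\Nwf}{\nwd}\leq\bfrak$ and $\dfrak\leq\cofJ{\Nwf}{\nwd}$.

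The delicate step is not the coding but keeping $\Psi_-(f)$ nowhere dense in~$\unit$ \emph{globally} while still allowing $\mu(D_{n,k})$ to approach $\mu(I_n)$ piecewise; this is precisely why the $I_n$ are chosen to shrink to a~single accumulation point, and why each $D_{n,k}$ must be a~fat Cantor set rather than a~subinterval. A~secondary nuisance is that an arbitrary $K\in\nwd$ need not be measurable, which is why the estimates must be phrased through $\mu^*$ and through the closure $\overline{K\cap I_n}$.
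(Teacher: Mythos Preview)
Your argument is correct and follows essentially the same construction as the paper's proof: code $f$ by picking, inside the $n$th interval, a fat nowhere dense set whose measure increases with $f(n)$, and decode $K$ by reading off how much measure $K$ leaves in each interval. Two minor remarks: your intervals $[1/(n+2),1/(n+1)]$ share endpoints and so are not literally pairwise disjoint (harmless, but worth tidying), and your insistence that the $I_n$ accumulate at a single point is unnecessary --- for \emph{any} pairwise disjoint open intervals $I_n$, a union $\bigcup_n C_n$ with each $C_n\subseteq I_n$ nowhere dense is automatically nowhere dense, since any nonempty open $U$ meeting some $I_n$ contains a nonempty open $V\subseteq U\cap I_n$ missing $C_n$, and $V\subseteq I_n$ forces $V$ to miss every other $C_m$ as well. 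On the other hand, your use of outer measure to define $\Psi_+$ on possibly non-measurable $K\in\nwd$ is a genuine improvement in rigor over the paper, which writes $\mu(A\cap I_n)$ without comment.
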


\begin{proof}
Let $\seq{I_n}{n\in\omega}$ be a~sequence of pairwise disjoint
open intervals in $[0,1]$ and for every $n\in\omega$ let
$\seq{C_{n,k}}{k\in\omega}$ be an increasing sequence of nowhere
dense subsets of~$I_n$ such that
$\mu(\bigcup_{m\in\omega}A_{n,m})=\mu(I_n)$.
Define
\[
\Psi_-\colon\baire\to\nwd
\quad\text{and}\quad
\Psi_+\colon\nwd\to\baire
\]
by $\Psi_-(f)=\bigcup_{n\in\omega}C_{n,f(n)}$ and
$\Psi_+(A)(n)=\min\set{k\in\omega}{\mu(A\cap I_n)<\mu(C_{n,k})}$ for $f\in\baire$ and $A\in\nwd$;
$\Psi_+(A)$~is well defined because $\mu(A\cap I_n)<\mu(I_n)$.
One can easily verify that
$\Psi_-(f)\subseteq^\Nwf A\Rightarrow\forall n$
$f(n)\leq\Psi_+(A)(n)$.
\end{proof}

Before we state the next theorem, notice that $\add{\nwd}=\aleph_0$, since it is not a $\sigma$-ideal~-- there exists a meager set of measure $1$ (see \autoref{FACTS}\ref{NMareorth}) and by \autoref{measureofnwd} this set cannot be nowhere dense.
Also, recall that any null set can be covered by some null set of type~$G_\delta$.
Finally, we will prove the basic dichotomy (trivial/non-trivial) for the studied ideals on uncountable spaces.

\begin{theorem}\label{uncountableCof}
\startlist
\begin{tasks}
[label-format={\normalfont},label={(\alph*)},label-align=right,
item-indent={38pt},label-width={20pt},label-offset={5pt},
after-item-skip={6pt}](2)
\task\label{nwdchar}%
$\cofJ{\Nwf}{\nwd}\geq\dfrak$,\\
$\cofJ{\Ewf}{\nwd}\geq\dfrak$,\\
$\cofJ{\Mwf}{\nwd}=1$,\\
$\cofJ{\SN}{\nwd}\geq\dfrak$.

\task\label{Nchar}%
$\cofJ{\nwd}{\Nwf}\geq\add{\Nwf}$,\\
$\cofJ{\Ewf}{\Nwf}\geq\add{\Nwf}$,\\
$\cofJ{\Mwf}{\Nwf}=1$,\\
$\cofJ{\SN}{\Nwf}\geq\add{\Nwf}$.

\task\label{Mchar}%
$\cofJ{\nwd}{\Mwf}\geq\add{\Mwf}$,\\
$\cofJ{\Nwf}{\Mwf}=1$,\\
$\cofJ{\Ewf}{\Mwf}\geq\add{\Mwf}$,\\
$\cofJ{\SN}{\Mwf}\geq\add{\Mwf}$.

\task\label{Echar}%
$\cofJ{\nwd}{\Ewf}\geq\add{\Ewf}$,\\
$\cofJ{\Nwf}{\Ewf}=1$,\\
$\cofJ{\Mwf}{\Ewf}=1$,\\
$\cofJ{\SN}{\Ewf}\geq\add{\Ewf}$.

\task*\label{SNchar}%
$\cofJ{\nwd}{\SN}\geq\add{\SN}$,\\
$\cofJ{\Nwf}{\SN}=1$,\\
$\cons{\cofJ{\Ewf}{\SN}=\cofJ{\Mwf}{\SN}=1}$,\\
$\add{\Mwf}=\cof{\Mwf}\Rightarrow
\cofJ{\Ewf}{\SN}\geq\cofJ{\Mwf}{\SN}\geq\add{\SN}$.
\end{tasks}
\end{theorem}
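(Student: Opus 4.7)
The plan is to partition the twenty sub-items in \ref{nwdchar}--\ref{SNchar} by technique. The `$=1$' entries are immediate: the inclusions $\nwd\subseteq\Mwf$, $\Ewf\subseteq\Nwf\cap\Mwf$, and $\SN\subseteq\Nwf$ collapse the corresponding relative cofinalities via \autoref{basicPropcofJ}\ref{basicPropcofJa}, while \autoref{FACTS}\ref{NMareorth} combined with \autoref{orthimpliescofJ1}\ref{orthimpliescofJ1b} gives $\cofJ{\Nwf}{\Mwf}=\cofJ{\Mwf}{\Nwf}=1$. The $\dfrak$-bounds in \ref{nwdchar} reduce to \autoref{nwdBaireTukey}: the inclusions $\Ewf,\SN\subseteq\Nwf$ propagate $\cofJ{\Nwf}{\nwd}\geq\dfrak$ upward to $\cofJ{\Ewf}{\nwd}$ and $\cofJ{\SN}{\nwd}$ via \autoref{basicPropcofJ}\ref{basicPropcofJc}.

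Every remaining entry has the form $\cofJ{\Jwf}{\Iwf}\geq\add{\Iwf}$, which by \autoref{improvedDich} reduces to showing $\Iwf\nrsub\Jwf$: given any $A\in\Iwf$, exhibit $I\in\Iwf$ with $I\setminus A\notin\Jwf$. For $\Jwf=\nwd$, no member of $\Nwf$, $\Mwf$, $\Ewf$, or $\SN$ covers a nonempty open interval, so $\unit\setminus A$ is dense and admits a countable dense $I$, lying in $\Iwf$ with $I\setminus A=I$ not nowhere dense. For $\Jwf\in\{\Ewf,\SN\}$ and $\Iwf\in\{\Nwf,\Mwf,\Ewf\}$, the approach is: cover $A$ by a set $A^*$ of the appropriate Borel class, then inside $\unit\setminus A^*$ produce either (i) via \autoref{MRLemma}, a nowhere dense $\Nwf$-perfect $K$ and a null dense $G_\delta$ $G\subseteq\unit\setminus A^*$ with $K\cap G$ dense in $K$, whence $K\cap G\in\Nwf\cap\Mwf$ while $K\cap G\notin\Ewf$ by \autoref{FACTS}\ref{measureFactsc}; or (ii) a nowhere dense perfect null set $P\subseteq\unit\setminus A^*$ obtained from \autoref{FACTS}\ref{measureFactsd} together with \autoref{FACTS}\ref{souslin}, so $P\in\Ewf$ but $P\notin\SN$ by \autoref{FACTS}\ref{perfectSMZ1}. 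The main obstacle is the relative form of \autoref{MRLemma}: when $A^*$ is dense the ambient null dense $G_\delta$ must be carved out of the non-open set $\unit\setminus A^*$, forcing the Cantor-like construction of $K$ to run inside a closed positive-measure (or comeager-in-interval) seed that is extracted from $\unit\setminus A^*$ by measure or category regularity.

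For the independence clauses in \ref{SNchar}, the two directions are separated by the Borel conjecture. Under $\mathsf{BC}$, $\SN=[\unit]^{\leq\aleph_0}\subseteq\Ewf\subseteq\Mwf$, so \autoref{basicPropcofJ}\ref{basicPropcofJa} collapses $\cofJ{\Ewf}{\SN}=\cofJ{\Mwf}{\SN}=1$. For the implication $\add{\Mwf}=\cof{\Mwf}\Rightarrow\cofJ{\Mwf}{\SN}\geq\add{\SN}$, the first inequality $\cofJ{\Ewf}{\SN}\geq\cofJ{\Mwf}{\SN}$ is \autoref{basicPropcofJ}\ref{basicPropcofJc} applied to $\Ewf\subseteq\Mwf$; for the lower bound I would reduce via \autoref{improvedDich} to $\SN\nrsub\Mwf$ and use \autoref{FACTS}\ref{gLuzin} to construct a generalized Luzin set $L$ of regular cardinality $\cof{\Mwf}$, which lies in $\SN\setminus\Mwf$. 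Then, given $A\in\SN$, the witness $I=L\cup A\in\SN$ satisfies $I\setminus A=L\setminus A$; showing $L\setminus A\notin\Mwf$ is direct when $A$ is meager, and the non-meager-$A$ case requires rebuilding the Luzin set inside a comeager $A$-avoiding set, exploiting $\add{\Mwf}=\cof{\Mwf}$ so that the inductive Luzin construction still has enough room after the restriction.
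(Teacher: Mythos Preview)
Your overall structure matches the paper's proof: the trivial cases, the reduction of the $\dfrak$-bounds to \autoref{nwdBaireTukey}, the reduction of each $\cofJ{\Jwf}{\Iwf}\geq\add{\Iwf}$ to $\cofJ{\Jwf}{\Iwf}>1$, the countable-dense witness for $\Jwf=\nwd$, the perfect null set for $\Jwf=\SN$, the Borel conjecture for the consistency clause, and the generalized Luzin set for the final implication are all exactly what the paper does.

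Two places where you detour unnecessarily. First, for $\cofJ{\Ewf}{\Nwf}>1$ the ``relative form of \autoref{MRLemma}'' obstacle you describe is self-inflicted: the paper does not use \autoref{MRLemma} here at all. Given $A\in\Nwf$, cover it by a null $G_\delta$; the complement is $F_\sigma$ of positive measure, so \autoref{FACTS}\ref{measureFactsd} directly hands you an $\Nwf$-perfect $N$ disjoint from $A$, and any null $G_\delta$ dense in $N$ (built inside $N$, not inside $\unit$) witnesses $\notin\Ewf$ via \autoref{FACTS}\ref{measureFactsc}. Only the $\cofJ{\Ewf}{\Mwf}$ case needs \autoref{MRLemma}, and there the dense $G_\delta$ disjoint from $A$ comes for free from the $F_\sigma$ meager cover of $A$, so no relative version is needed.

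Second, for the Luzin-set argument your plan to fix a single $L$ and then case-split on whether $A$ is meager runs into trouble: in the non-meager case you appeal to a ``comeager $A$-avoiding set'', but $\unit\setminus A$ is only guaranteed \emph{nonmeager} (by \autoref{FACTS}\ref{complementOfSMZ}), not comeager. The paper avoids the split entirely: for each $A\in\SN$ it builds the Luzin set from scratch inside $\unit\setminus A$, choosing $x_\xi\in(\unit\setminus A)\setminus\bigcup_{\eta<\xi}M_\eta$ at each stage, which is nonempty precisely because $\unit\setminus A$ is nonmeager and $\add{\Mwf}=\cof{\Mwf}$. Your final clause (``rebuilding the Luzin set \ldots\ exploiting $\add{\Mwf}=\cof{\Mwf}$'') is pointing at this, so you would likely have arrived there, but the global-$L$-then-repair framing is a detour.
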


\begin{proof}
Note that
\ref{nwdchar}~$\cofJ{\Mwf}{\nwd}=1$ because $\nwd\subseteq\Mwf$;
\ref{Nchar}--\ref{Mchar}~~$\cofJ{\Mwf}{\Nwf}=\cofJ{\Nwf}{\Mwf}=1$
holds by \autoref{orthimpliescofJ1}~\ref{orthimpliescofJ1b}
because $\Nwf\perp\Mwf$;
\ref{Echar}~$\cofJ{\Nwf}{\Ewf}=\cofJ{\Mwf}{\Ewf}=1$ because $\Ewf\subseteq\Nwf\cap\Mwf$; and
\ref{SNchar}~$\cofJ{\Nwf}{\SN}=1$ because $\SN\subseteq\Nwf$.
Assuming Borel's conjecture
(see \cite{laver}) we have
$\SN=[[0,1]]^{\leq\omega}\subseteq\Ewf\subseteq\Mwf$ and by \autoref{basicPropcofJ},
$\cons{\cofJ{\Ewf}{\SN}=\cofJ{\Mwf}{\SN}=1}$.

\ref{nwdchar}:
The inequality $\cofJ{\Nwf}{\nwd}\geq\dfrak$ holds by \autoref{nwdBaireTukey}.
Since $\Ewf\cup\SN\subseteq\Nwf$ we have
$\cofJ{\Ewf}{\nwd}\geq\cofJ{\Nwf}{\nwd}$ and
$\cofJ{\SN}{\nwd}\geq\cofJ{\Nwf}{\nwd}$.

\ref{Nchar}--\ref{SNchar}:
By \autoref{add<=cof}, to prove the assertions of the form
$\cofJ{\Jwf}{\Iwf}\geq\add{\Iwf}$ it is enough to show that
$\cofJ{\Jwf}{\Iwf}>1$.

$\cofJ{\nwd}{\Iwf}>1$ for $\Iwf=\Nwf$, $\Mwf$, $\Ewf$, $\SN$:
Let $A\in\Iwf$ be arbitrary.
There is a~countable dense set $D\subseteq\unit\setminus A$.
Clearly $D\in\Iwf$ and $D\nsubseteq^\nwd A$ because
$D\setminus A=D\notin\nwd$.

$\cofJ{\SN}{\Iwf}>1$ for $\Iwf=\Nwf$, $\Mwf$, $\Ewf$:
Let $A\in\Iwf$ be arbitrary.
By \autoref{FACTS}\ref{souslin} there is a~perfect set $C\subseteq\unit\setminus A$
such that $\mu(C)=0$.
Note that $C$ is nowhere dense.
By \autoref{FACTS}\ref{perfectSMZ1}, we have that $C\notin\SN$.
Then $C\in\Iwf$ and $C\nsubseteq^\SN A$
because $C\setminus A=C\notin\SN$.

$\cofJ{\Ewf}{\Nwf}>1$:
Let $A\in\Nwf$ be arbitrary.
There is an $\Nwf$-perfect set~$N$ disjoint from~$A$
(see e.g., \autoref{FACTS}\ref{measureFactsd}).
Let $G$ be a~$G_\delta$~subset of~$N$ with $\mu(G)=0$.
By \autoref{FACTS}\ref{measureFactsc}, $G\notin\Ewf$.
Thus $G\in\Nwf$ and $G\nsubseteq^\Ewf A$ because
$G\setminus A=G\notin\Ewf$.

$\cofJ{\Ewf}{\Mwf}>1$:
Let $A\in\Mwf$.
There is a~$G_\delta$ set $G\subseteq\unit$ disjoint from~$A$.
By \autoref{MRLemma} there is a nowhere dense $\Nwf$-perfect set $K\subseteq\unit$ such that $K\cap G$ is dense in $K$.
Obviously, $K\in\Mwf$.
We claim that $K\cap G\notin\Ewf$, which proves that $K\not\subseteq^\Ewf A$ and hence, $\cofJ{\Ewf}{\Mwf}>1$.
Towards a~contradiction, assume that $K\cap G\subseteq\bigcup_{n\in\omega}E_n'$ where $E_n'$ are closed measure zero sets.
The sets $E_n=K\cap E_n'$ are closed sets of measure zero and $K\cap G=\bigcup_{n\in\omega}E_n\cap G$.
The $G_\delta$ set $K\cap G$ is a complete metric space and by the Baire category theorem there is a nonempty relatively open set $U'\subseteq K\cap G$ such that $U'\subseteq E_n\cap G$ for some $n\in\omega$.
Let $U\subseteq\unit$ be an open set such that $U'=K\cap G\cap U$.
Then $\emptyset\neq K\cap G\cap U=E_n\cap G\cap U$.
Since $K$ and $E_n$ are closed sets and $K\cap G\cap U$ is a dense subset of $K\cap U$,
\begin{align*}
K\cap U&=\set{x\in U}{\text{$x$ is a limit of a sequence of points
from $K\cap G\cap U$}}\\
&=\set{x\in U}{\text{$x$ is a limit of a sequence of points from
$E_n\cap G\cap U$}}\subseteq E_n\cap U.
\end{align*}
Then $\mu(E_n)\geq\mu(E_n\cap U)\geq\mu(K\cap U)>0$ because $K$ is $\Nwf$-perfect.
A~contradiction.

$\add{\Mwf}=\cof{\Mwf}\Rightarrow
\cofJ{\Ewf}{\SN}\geq\cofJ{\Mwf}{\SN}>1$:
$\cofJ{\Ewf}{\SN}\geq\cofJ{\Mwf}{\SN}$ because
$\Ewf\subseteq\Mwf$.
Let $A\in\SN$ be arbitrary.
Denote $\kappa=\cof{\Mwf}$ and let $\set{M_\xi}{\xi<\kappa}$
be a~base of~$\Mwf$.
By \autoref{FACTS}\ref{complementOfSMZ} the complement of~$A$
is nonmeager.
Choose $x_\xi\in(\unit\setminus A)\setminus\bigcup_{\eta<\xi}M_\eta$ for $\xi<\kappa$;
this is possible because $\add{\Mwf}=\kappa$.
It is clear that $B=\set{x_\xi}{\xi<\kappa}$ is a~generalized Luzin set of size~$\kappa$
disjoint from~$A$.
By \autoref{FACTS}\ref{gLuzin}, $B\in\SN\setminus\Mwf$ and $B\nsubseteq^\Mwf A$ because $B\setminus A=B\notin\Mwf$.
\end{proof}

Since Bartoszy\'nski and Shelah \cite{BS1992} showed that $\cof{\Ewf}=\cof{\Mwf}$, we have the following.

\begin{corollary}
$\cons{\cofJ{\Ewf}{\Nwf}=\cof{\Nwf}}$.
\qed
\end{corollary}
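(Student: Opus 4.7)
The plan is to combine the Bartoszy\'nski--Shelah equality $\cof{\Ewf}=\cof{\Mwf}$ with \autoref{RCdepOnCof}\ref{RCdepOnCof2}, which says that if $\Jwf\subseteq\Iwf$ and $\cof{\Jwf}<\cof{\Iwf}$, then $\cofJ{\Jwf}{\Iwf}=\cof{\Iwf}$. Since $\Ewf\subseteq\Nwf$, it suffices to produce a model of \ZFC{} in which $\cof{\Mwf}<\cof{\Nwf}$, as then $\cof{\Ewf}=\cof{\Mwf}<\cof{\Nwf}$, and the proposition applied to $\Jwf=\Ewf$, $\Iwf=\Nwf$ yields $\cofJ{\Ewf}{\Nwf}=\cof{\Nwf}$ immediately.

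The consistency of the strict inequality $\cof{\Mwf}<\cof{\Nwf}$ is part of the classical analysis of Cicho\'n's diagram; it holds, for instance, in the random real model obtained by iterating random forcing of sufficient length over a~model of \CH{} (so that $\cof{\Mwf}=\dfrak=\aleph_1$ while $\cof{\Nwf}=\cfrak=\aleph_2$). I would just cite this as a~known result rather than reprove it.

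Putting the two ingredients together is purely mechanical and there is no real obstacle. The only point requiring a brief comment is that one should verify that the consistency statement $\cof{\Mwf}<\cof{\Nwf}$, together with $\cof{\Ewf}=\cof{\Mwf}$, implies the hypotheses of \autoref{RCdepOnCof}\ref{RCdepOnCof2}: indeed $\Ewf\subseteq\Nwf$ holds in \ZFC, and the cofinality comparison $\cof{\Ewf}<\cof{\Nwf}$ follows directly from the two ingredients. Hence the corollary is a two-line consequence of \autoref{RCdepOnCof}, the Bartoszy\'nski--Shelah theorem, and the known consistency of $\cof{\Mwf}<\cof{\Nwf}$.
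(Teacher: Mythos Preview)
Your approach is correct and is exactly the argument the paper has in mind: combine $\cof{\Ewf}=\cof{\Mwf}$ with the consistency of $\cof{\Mwf}<\cof{\Nwf}$ and apply \autoref{RCdepOnCof}\ref{RCdepOnCof2}.

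One factual slip, however: the random real model is \emph{not} a witness for $\cof{\Mwf}<\cof{\Nwf}$. Adding $\aleph_2$ random reals over a model of \CH{} makes $\non{\Mwf}=\cfrak=\aleph_2$, and since $\non{\Mwf}\leq\cof{\Mwf}$ always holds, one gets $\cof{\Mwf}=\aleph_2=\cof{\Nwf}$ there. (Equivalently, recall $\cof{\Mwf}=\max\{\dfrak,\non{\Mwf}\}$, so $\dfrak=\aleph_1$ alone does not suffice.) The consistency of $\cof{\Mwf}<\cof{\Nwf}$ is nonetheless a standard part of the analysis of Cicho\'n's diagram (see, e.g., Bartoszy\'nski--Judah, \emph{Set Theory: On the Structure of the Real Line}); you should simply cite it, as you already proposed, and drop the random real parenthetical.
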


\section{Maximal ideals}\label{secMaxIdeals}

In this section, we show how relative cofinality behaves when maximal ideals are being considered.

\begin{lemma}\label{restrict}
Let $\Iwf$ and $\Jwf$ be ideals on~$X$.
If $A\in\Iwf^*\setminus\Jwf$, then\/
$\cof{\Iwf}=\cof{\Iwf{\restriction}A}$ and\/
$\cofJ{\Jwf}{\Iwf}=\cofJ{\Jwf{\restriction}A}{\Iwf{\restriction}A}$.
\qed
\end{lemma}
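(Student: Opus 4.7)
The plan is to reduce both equalities to exhibiting mutually inverse translations between witnesses on $X$ and witnesses on $A$, built from the two natural maps $C\mapsto C\cap A$ from $\Iwf$ onto $\Iwf{\restriction}A$ and $B\mapsto B\cup(X\setminus A)$ from $\Iwf{\restriction}A$ into $\Iwf$ (the latter landing in $\Iwf$ because $X\setminus A\in\Iwf$). First, I would check that the hypothesis $A\in\Iwf^*\setminus\Jwf$ makes the right-hand sides well-defined: $A\notin\Iwf$ (else $X=A\cup(X\setminus A)\in\Iwf$), so $\Iwf{\restriction}A$ is a proper ideal on $A$; and $A\notin\Jwf$ by hypothesis, so $\Jwf{\restriction}A$ is a proper ideal on $A$. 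I would also record the obvious inclusion $\Jwf{\restriction}A\subseteq\Jwf$.

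For the cofinality equality, the restriction map sends any base of $\Iwf$ to a base of $\Iwf{\restriction}A$, giving $\cof{\Iwf{\restriction}A}\leq\cof{\Iwf}$; conversely, the lifting map sends any base of $\Iwf{\restriction}A$ to a base of $\Iwf$ (any $I\in\Iwf$ is contained in $B\cup(X\setminus A)$ whenever $I\cap A\subseteq B$), giving the reverse inequality.

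For the relative cofinality, the same two maps translate witnesses. In the downward direction, if $\Awf\subseteq\Iwf$ witnesses $\cofJ{\Jwf}{\Iwf}$, then $\{C\cap A:C\in\Awf\}$ witnesses $\cofJ{\Jwf{\restriction}A}{\Iwf{\restriction}A}$: given $I\in\Iwf$ and $C\in\Awf$ with $I\setminus C\in\Jwf$, the set $(I\cap A)\setminus(C\cap A)\subseteq I\setminus C$ lies in $\Jwf$ and is contained in~$A$, so it belongs to $\Jwf{\restriction}A$. In the upward direction, if $\Awf\subseteq\Iwf{\restriction}A$ witnesses the restricted invariant, then $\{B\cup(X\setminus A):B\in\Awf\}\subseteq\Iwf$ witnesses the full one: for any $I\in\Iwf$, picking $B\in\Awf$ with $(I\cap A)\setminus B\in\Jwf{\restriction}A$ yields $I\setminus(B\cup(X\setminus A))=(I\cap A)\setminus B\in\Jwf$.

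The only delicate point, and the place where $A\in\Iwf^*$ (rather than merely $A\in\Iwf^+$) is essential, is the upward lifting: one must adjoin $X\setminus A$ to each $B$, because the piece $I\setminus A$ is only known to lie in $\Iwf$ and need not lie in $\Jwf$. I do not expect any further obstacle; the remaining checks are routine bookkeeping with the definitions of $\Iwf{\restriction}A$, $\Jwf{\restriction}A$, and $\subseteq^\Jwf$.
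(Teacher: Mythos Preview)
Your proposal is correct and complete. The paper itself gives no proof of this lemma (it is marked with a bare \qed), so your two natural maps $C\mapsto C\cap A$ and $B\mapsto B\cup(X\setminus A)$ are precisely the routine verification the authors omit; your remark about where the hypothesis $A\in\Iwf^*$ (not merely $A\in\Iwf^+$) is actually used is a nice addition.
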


\begin{lemma}\label{maxA}
If $\Iwf$ is a~maximal ideal on~$X$, then\/ $\cof{\Iwf}>\non{\Iwf}$.
\end{lemma}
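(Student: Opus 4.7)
The plan is to reduce the problem to a uniform maximal ideal on a set of cardinality $\non{\Iwf}$ and then run a Posp\'{\i}\v{s}il-style diagonalization that exploits maximality on two disjoint sequences simultaneously.

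First I would pick $A\subseteq X$ with $|A|=\non{\Iwf}$ and $A\notin\Iwf$; by maximality $X\setminus A\in\Iwf$, so $A\in\Iwf^*$. \autoref{restrict} then gives $\cof{\Iwf}=\cof{\Iwf{\restriction}A}$, and a direct verification shows that $\Iwf{\restriction}A$ is again a maximal ideal on $A$ with $\non{\Iwf{\restriction}A}=|A|=\non{\Iwf}$ (every subset of $A$ not in $\Iwf$ has cardinality $\geq\non{\Iwf}=|A|$). Note that $|A|$ is infinite because $\Iwf$ contains all finite sets. After this reduction I may assume that $\Iwf$ itself is uniform, in the sense that $|X|=\non{\Iwf}$ and every $\Iwf$-positive set has cardinality exactly~$|X|$.

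Next, I would suppose for contradiction that $\kappa:=\cof{\Iwf}\leq|X|$ and fix a base $\{B_\alpha:\alpha<\kappa\}$ of~$\Iwf$. Since $B_\alpha\in\Iwf$, maximality gives $X\setminus B_\alpha\notin\Iwf$, and uniformity then forces $|X\setminus B_\alpha|=|X|\geq\kappa$. I would recursively choose, for every $\alpha<\kappa$, two distinct points
\[
x_\alpha,\,y_\alpha\in(X\setminus B_\alpha)\setminus\{x_\beta,y_\beta:\beta<\alpha\},
\]
which is possible because $|X\setminus B_\alpha|=|X|>2\,|\alpha|$. Put $Y=\{x_\alpha:\alpha<\kappa\}$ and $Z=\{y_\alpha:\alpha<\kappa\}$; these sets are disjoint, and $x_\alpha,y_\alpha\notin B_\alpha$ for every $\alpha$.

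Finally I would apply maximality to $Y$. If $Y\in\Iwf$, some $B_\alpha$ would have to contain $Y$ and in particular $x_\alpha$, contradicting $x_\alpha\notin B_\alpha$. If instead $X\setminus Y\in\Iwf$, then $Z\subseteq X\setminus Y$ gives $Z\in\Iwf$, so some $B_\alpha$ would contain $Z$ and hence $y_\alpha$, again a contradiction. This yields $\cof{\Iwf}>\non{\Iwf}$. The one delicate point is the need for \emph{two} disjoint diagonals $Y$ and $Z$: a single diagonal is defeated by exactly one branch of the ultrafilter dichotomy, and introducing the second sequence disjoint from the first is what allows the same family of recursive choices to block the opposite branch as well.
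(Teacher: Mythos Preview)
Your proof is correct and follows essentially the same route as the paper: reduce via \autoref{restrict} to the uniform case $|X|=\non{\Iwf}$, then build two disjoint diagonal sequences avoiding the given family and use maximality to force one of them into~$\Iwf$, contradicting that the family was a base. The only cosmetic difference is that the paper phrases the argument as ``no family of size~$\nu$ can be a base'' rather than ``assume $\cof{\Iwf}\le\nu$ and fix a base'', but the diagonalization and the use of the ultrafilter dichotomy on the pair $Y,Z$ (the paper's $B^0,B^1$) are identical.
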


\begin{proof}
Denote $\nu=\non{\Iwf}$.
Without loss of generality we can assume that $\abs{X}=\nu$
since otherwise, by \autoref{restrict}, we can pass to the maximal ideal $\Iwf{\restriction}A$ for some set $A\in\Iwf^*$
of cardinality~$\nu$.
Let $\set{A_\alpha}{\alpha<\nu}\subseteq\Iwf$ be arbitrary.
Since $[X]^{<\nu}\subseteq\Iwf$, by induction on $\alpha<\nu$ we can
construct a~sequence $\seq{x^i_\alpha}{(\alpha,i)\in\nu\times2}$ of pairwise distinct members of~$X$ such that
$\{x^0_\alpha,x^1_\alpha\}\cap A_\alpha=\emptyset$.
Let $B^i=\set{x^i_\alpha}{\alpha<\nu}$ for $i=0$,~$1$.
By maximality of~$\Iwf$, $B^i\in\Iwf$ for some~$i$.
Then for every $\alpha<\kappa$,
$x^i_\alpha\in B^i\setminus A_\alpha\ne\emptyset$ which proves that
$\set{A_\alpha}{\alpha<\kappa}$ is not a~base of~$\Iwf$.
It follows that $\cof{\Iwf}>\nu$.
\end{proof}

The following statement concerns the value of $\cofJ{\Jwf}{\Iwf}$ if at least one of the ideals $\Iwf$ and $\Jwf$ is maximal.

\begin{prop}\label{maxB}
Let $\Iwf$, $\Jwf$ be ideals on $X$.
\begin{enumerate}[label=\normalfont(\alph*)]
\item\label{maxBa}
If $\Jwf$ is maximal, then $\cofJ{\Jwf}{\Iwf}=1$.
\item\label{maxBb}
If $\Iwf$ is maximal, then the following conditions are equivalent:
\begin{enumerate}[label=\normalfont(\roman*)]
\item\label{maxBb1}
$\cofJ{\Jwf}{\Iwf}=1$.
\item\label{maxBb2}
$\cofJ{\Jwf}{\Iwf}\leq\omega$.
\item\label{maxBb3}
$\Iwf\rsub\Jwf$.
\item\label{maxBb4}
$\Jwf\nsubseteq\Iwf$ or $\Iwf\rsub\Jwf$.
\item\label{maxBb5}
$\Jwf\nsubseteq\Iwf$ or $\Jwf\subseteq\Iwf\rsub\Jwf$.
\end{enumerate}
\end{enumerate}
\end{prop}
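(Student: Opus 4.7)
The plan is to dispatch (a) directly and to establish (b) via: (i)$\Leftrightarrow$(iii) from \autoref{orthimpliescofJ1}\ref{orthimpliescofJ1a} (requiring no maximality); the trivial implications (i)$\Rightarrow$(ii), (iii)$\Rightarrow$(iv), and (iv)$\Leftrightarrow$(v); and two genuine implications using maximality of~$\Iwf$, namely (iv)$\Rightarrow$(i) and (ii)$\Rightarrow$(iv).

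For (a), I would split on whether $\Iwf\subseteq\Jwf$. If so, $A=\emptyset\in\Iwf$ is a singleton witness since $I\setminus A=I\in\Jwf$ for every $I\in\Iwf$. Otherwise pick $A\in\Iwf\setminus\Jwf$; maximality of~$\Jwf$ yields $X\setminus A\in\Jwf$, hence $I\setminus A\subseteq X\setminus A\in\Jwf$ for every $I\in\Iwf$. For (iv)$\Rightarrow$(i), if $\Jwf\nsubseteq\Iwf$ pick $J\in\Jwf\setminus\Iwf$; maximality of~$\Iwf$ gives $X\setminus J\in\Iwf$, and $I\setminus(X\setminus J)=I\cap J\subseteq J\in\Jwf$. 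Otherwise $\Iwf\rsub\Jwf$, and (i) follows from \autoref{orthimpliescofJ1}\ref{orthimpliescofJ1a}.

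The core work is (ii)$\Rightarrow$(iv). Given a countable witness $\{A_n:n\in\omega\}\subseteq\Iwf$, without loss of generality increasing, I would split on whether $\bigcup_n A_n\in\Iwf$. In Case~A this union is itself a single witness, giving~(i). In Case~B ($\bigcup_n A_n\notin\Iwf$), maximality gives $B=X\setminus\bigcup_n A_n\in\Iwf$, and applying the witness property to~$B$ (together with $B\cap A_n=\emptyset$) forces $B\in\Jwf$. Within Case~B, if some $X\setminus A_n\in\Jwf$ then by maximality $X\setminus A_n\notin\Iwf$, hence $X\setminus A_n\in\Jwf\setminus\Iwf$ and the first disjunct of~(iv) holds. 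Otherwise, introduce the layers $D_k=A_k\setminus A_{k-1}$ (with $A_{-1}=\emptyset$) and the set $S=\{k:D_k\notin\Jwf\}$. If $S$~is finite, pick an upper bound~$n^*$ of~$S$; then $A_{n^*}$ is a single witness, because for any $I\in\Iwf$ the countable witness gives $m$ with $I\setminus A_m\in\Jwf$, we may assume $m\geq n^*$, and the decomposition $I\setminus A_{n^*}=(I\setminus A_m)\cup(I\cap(A_m\setminus A_{n^*}))$ has both summands in~$\Jwf$---the second being a finite union of $D_k\in\Jwf$ with $n^*<k\leq m$.

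The main obstacle is the remaining subcase, $S$~infinite in Case~B, where maximality of~$\Iwf$ must be used in a diagonal manner. Partition~$S$ into two disjoint infinite pieces $E$ and~$O$ and set $I^E=\bigcup_{k\in E}D_k$ and $I^O=\bigcup_{k\in O}D_k$. Since $I^E\cap I^O=\emptyset$, if both were outside~$\Iwf$ then maximality would place $X\setminus I^E$ and $X\setminus I^O$ in~$\Iwf$, and their union $X$ would be in~$\Iwf$, contradicting properness. Hence at least one, say $I^E$, lies in~$\Iwf$. Because $E$~is infinite and $D_k\cap A_n=\emptyset$ for $k>n$, for every~$n$ the set $I^E\setminus A_n$ contains $D_k$ for some $k\in E$ with $k>n$; such a $D_k$ is not in~$\Jwf$, so $I^E\setminus A_n\notin\Jwf$. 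This contradicts that $\{A_n\}$ is a countable witness, ruling out the subcase and completing (ii)$\Rightarrow$(iv).
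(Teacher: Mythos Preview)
Your argument is correct, and the decisive step---splitting an infinite disjoint family into two pieces and using maximality of~$\Iwf$ to place one of the two unions in~$\Iwf$---is exactly the idea the paper uses. The packaging differs: the paper proves $\text{(ii)}\Rightarrow\text{(iii)}$ by contradiction, assuming $\Iwf\nrsub\Jwf$ and using this directly to build pairwise disjoint sets $B_n\in\Iwf\setminus\Jwf$ with $B_n\cap A_n=\emptyset$, then splitting $\{B_n\}$ into evens and odds. You instead work forward from~(ii), analysing the layers $D_k=A_k\setminus A_{k-1}$ of the given witness, and your case analysis carries some dead weight: the fact $B=X\setminus\bigcup_nA_n\in\Jwf$ is never used, and the subcase ``some $X\setminus A_n\in\Jwf$'' is superfluous, since your layer argument alone already yields $\text{(ii)}\Rightarrow\text{(i)}$ (if $S$~is finite you get a single witness; if $S$~is infinite you contradict~(ii)). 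The paper's route avoids these detours by letting $\neg\text{(iii)}$ manufacture the disjoint family, but both arguments rest on the same maximality trick.
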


\begin{proof}
\ref{maxBa}: Either $\Iwf\subseteq\Jwf$ or $\Iwf\perp\Jwf$.
In both cases $\cofJ{\Jwf}{\Iwf}=1$.
Note that this is the limit case in \autoref{basicPropcofJ}\ref{basicPropcofJc}.

\ref{maxBb}:
By \autoref{orthimpliescofJ1}\ref{orthimpliescofJ1a},
$\text{\ref{maxBb1}}\Leftrightarrow\text{\ref{maxBb3}}$.
If $\Jwf\nsubseteq\Iwf$, then by maximality of~$\Iwf$, $\Jwf\perp\Iwf$ and then by definitions, $\Iwf\rsub\Jwf$.
It follows that
$\text{\ref{maxBb3}}\Leftrightarrow\text{\ref{maxBb4}}\Leftrightarrow\text{\ref{maxBb5}}$.
The implication $\text{\ref{maxBb1}}\Rightarrow\text{\ref{maxBb2}}$
is trivial.
We prove $\text{\ref{maxBb2}}\Rightarrow\text{\ref{maxBb3}}$.

Assume that there is a~countable set $\set{A_n}{n\in\omega}\subseteq\Iwf$ such that
$\forall A\in\Iwf$ $\exists n\in\omega$ $A\setminus A_n\in\Jwf$.
Towards a~contradiction assume that $\Iwf\nrsub\Jwf$.
Wlog we can assume that $A_n\subseteq A_{n+1}$
for all $n\in\omega$.
Since $\Iwf\nrsub\Jwf$, by definition (or by an equivalence in \autoref{orthimpliescofJ1}\ref{orthimpliescofJ1a}), we can inductively choose a~sequence of sets $\set{B_n}{n\in\omega}\subseteq\Iwf\setminus\Jwf$ such that $B_n\cap (A_n\cup\bigcup_{i<n}B_i)=\emptyset$.
Let $C_0=\bigcup_{n\in\omega}B_{2n}$ and $C_1=\bigcup_{n\in\omega}B_{2n+1}$.
Since $\Iwf$ is maximal, there is $i=0$,~$1$ such that $C_i\in\Iwf$.
For every $n\in\omega$, $C_i\setminus A_n\supseteq C_i\setminus A_{2n+i}\supseteq B_{2n+i}\notin\Jwf$.
A~contradiction.
\end{proof}

If $\Jwf=[X]^{<\kappa}$, then for maximal ideals~$\Iwf$ there is a~simple dichotomy:
$\cofJ{\Jwf}{\Iwf}$ is either~$1$ or~$\geq\omega_1$ depending on whether $\kappa>\non{\Iwf}$ or
$\kappa\leq\non{\Iwf}$.

\begin{prop}\label{maxC}
Let $\Iwf$ be a~maximal ideal on~$X$ and let $\nu=\non{\Iwf}$.
\begin{enumerate}[label=\normalfont(\alph*)]
\item\label{maxCa}
If $\omega\leq\kappa\leq\nu$, then\/
$\cofJ{[X]^{<\kappa}}{\Iwf}\geq\omega_1$.
\item\label{maxCb}
If $\nu<\kappa\leq\abs{X}$, then
$\cofJ{[X]^{<\kappa}}{\Iwf}=1$.
\item\label{maxCc}
If $\kappa\ge\omega$ and $\nu^{<\kappa}<\cof{\Iwf}$, then\/
$\cofJ{[X]^{<\kappa}}{\Iwf}=\cof{\Iwf}$.
\item\label{maxCd}
$\cofJ{*}{\Iwf}=\cof{\Iwf}>\nu$.
\end{enumerate}
\end{prop}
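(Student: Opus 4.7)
The plan is to tackle the four parts in the order (b), (a), (c), (d), since (b) is immediate from maximality, (a) reduces to \autoref{maxB}\ref{maxBb}, (c) is a counting argument building on both, and (d) falls out of (c) combined with \autoref{maxA}.

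For (b), since $\nu=\non{\Iwf}$ is a minimum, there exists $A\subseteq X$ with $|A|=\nu$ and $A\notin\Iwf$. Maximality of $\Iwf$ then gives $X\setminus A\in\Iwf$, and the singleton $\{X\setminus A\}$ already witnesses $\cofJ{[X]^{<\kappa}}{\Iwf}=1$: for any $I\in\Iwf$, $I\setminus(X\setminus A)=I\cap A$ has cardinality at most $|A|=\nu<\kappa$, so lies in $[X]^{<\kappa}$.

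For (a), I would invoke \autoref{maxB}\ref{maxBb}, which reduces $\cofJ{[X]^{<\kappa}}{\Iwf}\geq\omega_1$ to showing $\Iwf\nrsub[X]^{<\kappa}$: that is, for every $E\in\Iwf^*$ one must produce $I\in\Iwf$ with $|I\cap E|\geq\kappa$. Since $\Iwf$ is proper, $E\notin\Iwf$, so $|E|\geq\nu\geq\kappa$. If $\kappa<\nu$, take any $I\subseteq E$ of size $\kappa$; this $I$ lies in $\Iwf$ automatically because $|I|<\non{\Iwf}$. The delicate case is $\kappa=\nu$: partition $E=B^0\sqcup B^1$ with $|B^0|=|B^1|=\nu$, and invoke maximality of $\Iwf$. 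Either $B^0\in\Iwf$ directly, or $X\setminus B^0\in\Iwf$, in which case $B^1\subseteq X\setminus B^0$ forces $B^1\in\Iwf$; in either case one of the $B^i\subseteq E$ is in $\Iwf$ and has size $\nu=\kappa$.

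For (c), I first observe that $\cof{\Iwf}\leq 2^\nu$: pick $A\notin\Iwf$ with $|A|=\nu$, so $X\setminus A\in\Iwf$ by maximality, and apply \autoref{restrict} to the set $B=X\setminus(X\setminus A)=A\in\Iwf^*$ to conclude $\cof{\Iwf}=\cof{\Iwf{\restriction}A}\leq 2^\nu$. Consequently the hypothesis $\nu^{<\kappa}<\cof{\Iwf}$ forces $\kappa\leq\nu$ (otherwise $\nu^{<\kappa}\geq 2^\nu\geq\cof{\Iwf}$), and the same reduction allows me to assume $|X|=\nu$. Given a witness $\Awf$ for $\cofJ{[X]^{<\kappa}}{\Iwf}$ of cardinality $\lambda$, the family $\Bwf=\{A\cup F:A\in\Awf,\ F\in[X]^{<\kappa}\}$ is then a base of $\Iwf$: for $I\in\Iwf$, pick $A\in\Awf$ with $F=I\setminus A\in[X]^{<\kappa}\subseteq\Iwf$ (the last inclusion because $|F|<\kappa\leq\nu=\non{\Iwf}$), and note $I\subseteq A\cup F\in\Bwf$. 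Counting under $|X|=\nu$ gives $\cof{\Iwf}\leq|\Bwf|\leq\lambda\cdot\nu^{<\kappa}$; together with $\nu^{<\kappa}<\cof{\Iwf}$ and the fact that $\lambda\geq\omega_1$ (from (a)), this forces $\lambda\geq\cof{\Iwf}$, while the reverse inequality is \autoref{basicPropcofJ}\ref{basicPropcofJb}. Finally, (d) is a direct corollary: apply (c) with $\kappa=\omega$, noting $\nu^{<\omega}=\nu$ and $\nu<\cof{\Iwf}$ by \autoref{maxA}, which also supplies the strict inequality in the statement. The main obstacle is the boundary case $\kappa=\nu$ in (a), where the easy cardinality argument breaks and one must extract a large set of $\Iwf$ from a partition via the maximality dichotomy.
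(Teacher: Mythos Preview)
Your proof is correct and follows essentially the same route as the paper. The paper dispatches (a) and (b) together by citing \autoref{maxB}\ref{maxBb} and the equivalence $\Iwf\rsub[X]^{<\kappa}\Leftrightarrow\kappa>\nu$ without elaboration, whereas you supply the details (including the partition argument for the boundary case $\kappa=\nu$); for (c) the paper likewise restricts to a set of size $\nu$ in $\Iwf^*$ via \autoref{restrict} and then invokes \autoref{RCdepOnCof}\ref{RCdepOnCof2}, whose proof is exactly your counting argument $\cof{\Iwf}\leq\lambda\cdot\nu^{<\kappa}$ (i.e., \autoref{basecompletion}\ref{basecompletionc}) inlined.
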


\begin{proof}
\ref{maxCa} and \ref{maxCb}:
A~consequence of \autoref{maxB}\ref{maxBb} because
$\Iwf\rsub[X]^{<\kappa}\Leftrightarrow\kappa>\nu$.

\ref{maxCc}:
Denote $\Jwf=[X]^{<\kappa}$.
Since $\Iwf$~is maximal, there is a~set $A\in\Iwf^*$ of
cardinality~$\nu$.
We apply \autoref{restrict} for the set~$A$.
Now, $\cof{\Jwf{\restriction}A}\leq\nu^{<\kappa}<\cof{\Iwf}=
\cof{\Iwf{\restriction}A}$,
$\Jwf{\restriction}A\subseteq\Iwf{\restriction}A$,
and by \autoref{RCdepOnCof}\ref{RCdepOnCof2},
$\cofJ{\Jwf}{\Iwf}=\cofJ{\Jwf{\restriction}A}{\Iwf{\restriction}A}=
\cof{\Iwf{\restriction}A}=\cof{\Iwf}$.

\ref{maxCd}:
By \autoref{maxA} this is a~particular case of \ref{maxCc} for $\kappa=\omega$.
\end{proof}

Note that $\kappa\leq\nu$ in \autoref{maxC}~\ref{maxCc}
because $\nu^{<\kappa}<\cof{\Iwf}\leq2^\nu$.

According to Posp\'{\i}\v{s}il's theorem the smallest cardinality of a~relative base of a~maximal ideal can reach the cardinality of the power set.

\begin{prop}[Posp\'{\i}\v{s}il \cite{pospisil}]\label{maxD}
There is a~maximal ideal~$\Iwf$ on~$X$ such that\/
$\cof{\Iwf}=\cofJ{*}{\Iwf}=2^{\abs{X}}$.
\end{prop}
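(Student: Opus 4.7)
The plan is to combine two classical ingredients: the Hausdorff--Pospíšil lower bound of $2^{2^{|X|}}$ on the number of maximal ideals on an infinite set~$X$, and a cardinal counting argument refined by König's theorem. Set $\nu=2^{|X|}$.

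First I would produce sufficiently many maximal ideals on~$X$. By the Fichtenholz--Kantorovich theorem there is an independent family $\mathcal{A}\subseteq\mathcal{P}(X)$ of size~$\nu$; for each function $f\colon\mathcal{A}\to 2$ the family $\{A^{f(A)}:A\in\mathcal{A}\}$ has the finite intersection property by independence, so it extends to a maximal ideal $\Iwf_f$ on $X$, and distinct $f$'s give distinct $\Iwf_f$'s. This already yields $2^\nu=2^{2^{|X|}}$ pairwise distinct maximal ideals on~$X$.

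Next I would bound the number of maximal ideals $\Iwf$ with $\cof{\Iwf}<\nu$. Each such $\Iwf$ is determined by any one of its bases, a subfamily of $\mathcal{P}(X)$ of some size $\lambda<\nu$, and there are at most $(2^{|X|})^\lambda=2^{\max(|X|,\lambda)}$ such subfamilies. Summing over $\lambda<\nu$, the total is at most $\sigma:=\sup_{\lambda<\nu}2^\lambda$.

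The main obstacle, and the key step, is showing that $\sigma<2^\nu$. Here I would invoke König's theorem: were $\sigma=2^\nu$, then $2^\nu$ would be realized as a supremum of smaller cardinals indexed by an ordinal of cardinality at most~$\nu$, forcing $\cf(2^\nu)\leq\nu$ and contradicting König's $\cf(2^\nu)>\nu$. It follows that strictly fewer than $2^\nu$ maximal ideals on $X$ have cofinality less than $\nu$, and hence some maximal $\Iwf$ must satisfy $\cof{\Iwf}\geq\nu$. The reverse inequality is automatic from $\cof{\Iwf}\leq|\Iwf|\leq 2^{|X|}=\nu$, giving $\cof{\Iwf}=\nu$. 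Finally, $\cofJ{*}{\Iwf}=\cof{\Iwf}$ for this $\Iwf$ is supplied by \autoref{maxC}\ref{maxCd}.
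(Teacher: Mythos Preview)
Your counting approach has a genuine gap at the König step. You assert that if $\sigma:=\sup_{\lambda<\nu}2^\lambda$ were equal to $2^\nu$, then $2^\nu$ would be a supremum of \emph{smaller} cardinals indexed by a set of size~$\leq\nu$, whence $\cf(2^\nu)\leq\nu$. But nothing guarantees that each $2^\lambda$ with $\lambda<\nu$ is strictly below $2^\nu$; the supremum may simply be attained. Concretely, take $|X|=\omega$ and work in an Easton model with $2^{\aleph_0}=\aleph_2$ and $2^{\aleph_1}=2^{\aleph_2}=\aleph_3$. Then $\nu=2^{|X|}=\aleph_2$, and for $\lambda=\aleph_1<\nu$ one has $\nu^\lambda=(2^{\aleph_0})^{\aleph_1}=2^{\aleph_1}=\aleph_3=2^\nu$. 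Thus $\nu^{<\nu}=2^\nu$, your bound on the number of maximal ideals with small cofinality is exactly $2^\nu$, and the comparison with the $2^\nu$ many maximal ideals you produced yields nothing. The counting strategy cannot be repaired in \ZFC.

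The paper's proof is instead constructive. It fixes an independent family $\Awf$ of size $2^{|X|}$ and extends the family $\Vwf=\Awf\cup\{X\setminus\bigcup\Wwf:\Wwf\in[\Awf]^{\geq\omega}\}$ to a maximal ideal~$\Iwf$ (independence makes $\Vwf$ have the finite union property). If $\Cwf\subseteq\Iwf$ were cofinal with $|\Cwf|<2^{|X|}$, then by pigeonhole some single $C\in\Cwf$ would cover infinitely many members of~$\Awf$; calling this infinite subfamily $\Wwf$, one gets $\bigcup\Wwf\subseteq C\in\Iwf$ while $X\setminus\bigcup\Wwf\in\Vwf\subseteq\Iwf$, contradicting properness. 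The equality $\cofJ{*}{\Iwf}=\cof{\Iwf}$ then follows from \autoref{maxC}\ref{maxCd}, as you noted. This avoids cardinal arithmetic entirely and works uniformly in \ZFC.
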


\begin{proof}
The proof is in accordance with \cite[9.5~Proposition]{blass}.
Let $\Awf$ be an independent family of subsets of~$X$ of cardinality~$2^{\abs{X}}$, i.e., for any finitely many distinct members
$A_i,B_j\in\Awf$,
$\abs{X\setminus(\bigcup_iA_i\cup\bigcup_j(X\setminus B_j))}=\abs{X}$
(see e.g., \cite[Exercises for Chapter VIII, (A6)]{kunen}).
Let
$\Vwf=\Awf\cup
\set{X\setminus\bigcup\Wwf}{\Wwf\in[\Awf]^{\geq\omega}}$.
There is a~maximal ideal~$\Iwf$ containing~$\Vwf$.
By \autoref{maxC}\ref{maxCc} and \autoref{maxA}, $\cofJ{*}{\Iwf}=\cof{\Iwf}$.
Towards a~contradiction assume that $\cof{\Iwf}<2^{\abs{X}}$ and let
$\Cwf\subseteq\Iwf$ be a~cofinal subset of~$\Iwf$ of cardinality
$\abs{\Cwf}<2^{\abs{X}}$.
There is $C\in\Cwf$ such that the set
$\Wwf=\set{A\in\Awf}{A\subseteq C}$ is infinite.
Then $\bigcup\Wwf\in\Iwf$ because $\bigcup\Wwf\subseteq C\in\Iwf$
and by definition of~$\Vwf$ and $\Iwf$ also
$X\setminus\bigcup\Wwf\in\Iwf$.
This contradicts the maximality of~$\Iwf$.
\end{proof}

A~family $\Awf\subseteq\Pwf(X)$ is said to have
the \textit{finite union property}, abbreviated \textit{f.u.p.},
if $X\setminus\bigcup\Awf_0$ is infinite for every finite $\Awf_0\subseteq\Awf$.
A~family $\Awf\subseteq\Pwf(X)$ is said to have
the \textit{$\Jwf$-finite union property}, abbreviated
\textit{$\Jwf$-f.u.p.},
if $X\setminus\bigcup\Awf_0\in\Jwf^+$ for every finite $\Awf_0\subseteq\Awf$
(equivalently, if $\Awf\cup\Jwf$ has f.u.p.).
For an ideal~$\Jwf$ define
\begin{align*}
&\zfrak(\Jwf)=\min\set{\abs{\Awf}}{\Awf\subseteq\Jwf^+\ \land\
\text{$\Awf$~has $\Jwf$-f.u.p.}\ \land\
\forall B\in\Jwf^+\
\exists A\in\Awf\
A\cap B\in\Jwf^+},\\
&\zfrak'(\Jwf)=\min\set{\abs{\Awf}}{\Awf\subseteq\Jwf^+\ \land\
\text{$\Awf$~has $\Jwf$-f.u.p.}\ \land\
\forall B\in\Jwf^+\
\exists A\in\Awf\
\abs{A\cap B}\geq\omega}.
\end{align*}
Note that $\Awf\subseteq\Jwf^+\setminus\Jwf^*$ for~$\Awf$
in definitions of $\zfrak(\Jwf)$ and $\zfrak'(\Jwf)$.
In case $X=\omega$,
$\zfrak(\fin)=\zfrak'(\fin)=\pfrak$.
For comparison, let us consider one of the characterizations
of~$\pfrak$:
\[
\pfrak=\min\set{\abs{\Awf}}{\Awf\subseteq\Pwf(\omega)\ \land\
\text{$\Awf$~has f.u.p.}\ \land\
\forall B\in[\omega]^\omega\
\exists A\in\Awf\
A\nsubseteq^*\omega\setminus B}.
\]
Also note that if $\Awf\subseteq\Jwf^+$ is a~family witnessing the value $\zfrak(\Jwf)$,
then $\Iwf=\gen{\Jwf\cup\Awf}$ is an ideal, $\cofJ{\Jwf}{\Iwf}=\zfrak(\Jwf)$ and $\Iwf$~is $\Jwf$-tall,
i.e., for every $A\in\Iwf^+$ there is $B\subseteq A$ such that $B\in\Iwf\setminus\Jwf$.

\begin{lemma}\label{zfrak}
Let $\Jwf$ be an ideal on~$X$.
\begin{enumerate}[label=\normalfont(\alph*)]
\item\label{zfraka}
$\zfrak'(\Jwf)$ is defined if and only if $\Jwf$~is
a~non-maximal ideal.
\item\label{zfrakb}
$\zfrak(\Jwf)$ is defined if and only if there is a~maximal ideal $\Iwf\supseteq\Jwf$ such that $\Iwf\nrsub\Jwf$.
\item\label{zfrakc}
$\omega\leq\zfrak'(\Jwf)\leq\zfrak(\Jwf)\leq\abs{\Pwf(X)}$
whenever $\zfrak'(\Jwf)$ and $\zfrak(\Jwf)$ are defined.
\end{enumerate}
\end{lemma}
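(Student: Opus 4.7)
The plan is to prove all three parts around one construction: for a~suitable maximal ideal $\Iwf\supseteq\Jwf$, set $\Awf=\Iwf\setminus\Jwf$ and verify that $\Awf$ is a~witness.
The $\Jwf$-f.u.p.\ of~$\Awf$ comes for free because $\bigcup\Awf_0\in\Iwf$ for any finite $\Awf_0\subseteq\Awf$, so $X\setminus\bigcup\Awf_0\in\Iwf^*\subseteq\Jwf^+$, where the inclusion holds because properness of~$\Iwf$ forces $\Iwf^*\cap\Jwf=\emptyset$.

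For~\ref{zfraka}$(\Rightarrow)$, if $\Jwf$~is maximal and $A\in\Jwf^+$ then $X\setminus A\in\Jwf$, so already the one-element family $\{A\}$ violates the $\Jwf$-f.u.p.\ and no witness can exist.
For~$(\Leftarrow)$, I~will extend the non-maximal $\Jwf$ to a~strictly larger maximal~$\Iwf$ by Zorn and check the hitting property of $\Awf=\Iwf\setminus\Jwf$: if $B\in\Iwf$ then $B\in\Awf$ itself; if $B\in\Iwf^+=\Iwf^*$, the auxiliary claim ``$\Iwf\nrsub\fin$ for every maximal ideal on an~infinite set'' (proved by splitting any infinite $E\in\Iwf^*$ into two disjoint infinite halves, one of which must fall in~$\Iwf$ by maximality) yields $I\in\Iwf$ with $I\cap B$ infinite; if this $I$ happens to lie in~$\Jwf$, I~enlarge it to $I\cup C$ for a~fixed $C\in\Iwf\setminus\Jwf$, landing in~$\Awf$ while preserving the infinite trace on~$B$.

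For~\ref{zfrakb}, the same $\Awf=\Iwf\setminus\Jwf$ proves $(\Leftarrow)$, now using the stronger hypothesis $\Iwf\nrsub\Jwf$: for every $B\in\Iwf^*\subseteq\Jwf^+$ there is $I\in\Iwf$ with $I\cap B\in\Jwf^+$, so $A=I\cap B\in\Awf$ hits $B$ in the $\Jwf$-positive sense.
For~$(\Rightarrow)$, given a~witness $\Awf$ for $\zfrak(\Jwf)$, I~form $\Iwf_0=\gen{\Jwf\cup\Awf}$; the $\Jwf$-f.u.p.\ of~$\Awf$ prevents $X\in\Iwf_0$, so $\Iwf_0$ is a~proper ideal extending~$\Jwf$.
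Extending $\Iwf_0$ to any maximal ideal~$\Iwf$, every $E\in\Iwf^*$ lies in $\Jwf^+$ and is hit in the $\Jwf$-positive sense by some $A\in\Awf\subseteq\Iwf$, giving exactly $\Iwf\nrsub\Jwf$.

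Part~\ref{zfrakc} is routine: $\omega\leq\zfrak'(\Jwf)$ because a~finite $\Awf$ is missed by $B=X\setminus\bigcup\Awf\in\Jwf^+$ (guaranteed by the $\Jwf$-f.u.p.), the inequality $\zfrak'(\Jwf)\leq\zfrak(\Jwf)$ is immediate from $[X]^{<\omega}\subseteq\Jwf$, and $\zfrak(\Jwf)\leq\abs{\Pwf(X)}$ is trivial.
I~expect the main obstacle to be the hitting step in~\ref{zfraka}$(\Leftarrow)$: one must first prove the auxiliary fact $\Iwf\nrsub\fin$ for every maximal ideal on an~infinite set, and then handle the sub-case where the $\Iwf$-set with infinite trace on~$B$ happens to lie in~$\Jwf$, which is resolved by the fixed enlargement trick.
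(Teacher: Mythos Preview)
Your proposal is correct and follows essentially the same approach as the paper: in both cases the witness family is $\Awf=\Iwf\setminus\Jwf$ for a suitable maximal ideal $\Iwf\supseteq\Jwf$, and the argument splits according to whether $B\in\Iwf$ or $B\in\Iwf^*$. The only differences are cosmetic: the paper fixes a partition $B_0\cup B_1=X$ with $B_0\in\Iwf\setminus\Jwf$ and writes $A=B_0\cup A_0$ with $A_0\in\Iwf\cap[B]^\omega$ (leaving the existence of such an~$A_0$ implicit), whereas you isolate the existence step as the auxiliary claim $\Iwf\nrsub\fin$ and then repair membership in~$\Awf$ via $I\cup C$; and in~\ref{zfrakb}$(\Rightarrow)$ the paper argues by contradiction from $\Iwf\rsub\Jwf$ while you argue directly, but the content is identical.
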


\begin{proof}
\ref{zfraka}
If $\Jwf$ is a~maximal ideal, then $\zfrak'(\Jwf)$ is not defined
because there is no nonempty family $\Awf\subseteq\Jwf^+$
with $\Jwf$-f.u.p..
Assume that an ideal $\Jwf$ is not maximal and let
$\Iwf\supseteq\Jwf$ be a~maximal ideal.
There is a~partition $B_0\cup B_1=X$ such that $B_0,B_1\in\Jwf^+$
and $B_0\in\Iwf$.
The set $\Awf=\Iwf\setminus\Jwf$ has $\Jwf$-f.u.p.\ and
we verify that $\Awf$ is a~witness for $\zfrak'(\Jwf)\leq\abs{\Awf}$.
Let $B\in\Jwf^+$.
If $B\in\Iwf$, then for $A=B$ we have $A\cap B\in\Jwf^+$ and so
$\abs{A\cap B}=\omega$.
If $B\in\Iwf^*$, then find $A_0\in\Iwf\cap[B]^\omega$
and consider $A=B_0\cup A_0$.
It is obvious that $A\in\Awf$ and $\abs{A\cap B}=\omega$.

\ref{zfrakb}
Assume that there is a~maximal ideal $\Iwf\supseteq\Jwf$
such that $\Iwf\nrsub\Jwf$.
The family $\Awf=\Iwf\setminus\Jwf$ has $\Jwf$-f.u.p.\
because $\Iwf\cap\Jwf^*=\emptyset$.
We verify that $\Awf$~witnesses $\zfrak(\Jwf)\leq\abs{\Awf}$.
Let $B\in\Jwf^+$.
If $B\in\Iwf$, then for $A=B$ we have $A\in\Awf$ and
$A\cap B=B\in\Jwf^+$.
Otherwise, $B\in\Iwf^*$.
Since $\Iwf{\restriction}B\nsubseteq\Jwf$, there is
$A\in(\Iwf{\restriction}B)\setminus\Jwf$ and then
$A\in\Awf$ and $A\cap B=A\in\Jwf^+$.
This proves $\zfrak(\Jwf)\leq\abs{\Pwf(X)}$.

Let $\Jwf$ be an ideal such that $\zfrak(\Jwf)$ is defined
and let a~fimily $\Awf\subseteq\Jwf^+$ witness the equality
$\zfrak(\Jwf)=\abs{\Awf}$.
Since $\Awf$ has $\Jwf$-f.u.p., there is a~maximal ideal~$\Iwf$
on~$X$ containing $\Awf\cup\Jwf$ and hence
$\Awf\subseteq\Iwf\setminus\Jwf$.
By contradiction we prove that $\Iwf\nrsub\Jwf$.
Assume that $\Iwf\rsub\Jwf$ and let $B\in\Iwf^*$ be such that
$\Iwf\cap\Pwf(B)=\Jwf\cap\Pwf(B)$.
Then $B\in\Jwf^+$ and so there is $A\in\Awf$ such that
$A\cap B\in\Jwf^+$.
On the other hand $A\cap B\in\Iwf\cap\Pwf(B)\subseteq\Jwf$.
A~contradiction.

\ref{zfrakc}
The values $\zfrak'(\Jwf)$ and $\zfrak(\Jwf)$ cannot be finite
because if $\Awf\subseteq\Jwf^+$ and
$X\setminus\bigcup\Awf\in\Jwf^+$, then $\Awf$~is not a~witness
for $\zfrak'(\Jwf)\leq\abs{\Awf}$ nor
for $\zfrak(\Jwf)\leq\abs{\Awf}$.
If $\zfrak(\Jwf)$ is defined, then also
$\zfrak'(\Jwf)$ is defined and
$\omega\leq\zfrak'(\Jwf)\leq\zfrak(\Jwf)\leq\abs{\Pwf(X)}$.
\end{proof}

Recall Rudin--Blass ordering of ideals on~$\omega$:
$\Iwf\leq_\RB\Jwf$ if there is a~finite-to-one function
$\varphi\colon\omega\to\omega$ such that
$\Iwf=\set{A\subseteq\omega}{\varphi^{-1}[A]\in\Jwf}$.

\begin{lemma}\label{RBorder}
If $\Jwf$ and $\Kwf$ are ideals on $\omega$ and $\Kwf\leq_\RB\Jwf$,
then\/ $\zfrak'(\Jwf)\leq\zfrak'(\Kwf)$.
\end{lemma}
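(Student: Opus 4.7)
The plan is to take an optimal witness $\Awf$ for $\zfrak'(\Kwf)$ and pull it back along the Rudin--Blass map $\varphi$ to obtain a witness for $\zfrak'(\Jwf)$ of cardinality at most $|\Awf|$. Concretely, let $\varphi\colon\omega\to\omega$ be a finite-to-one map witnessing $\Kwf\leq_\RB\Jwf$, so that by definition $\varphi^{-1}[A]\in\Jwf\Leftrightarrow A\in\Kwf$, and equivalently (by negation) $\varphi^{-1}[A]\in\Jwf^+\Leftrightarrow A\in\Kwf^+$. Fix a family $\Awf\subseteq\Kwf^+$ with $\Kwf$-f.u.p.\ witnessing the equality $\zfrak'(\Kwf)=\abs{\Awf}$, and set
\[
\Awf'=\set{\varphi^{-1}[A]}{A\in\Awf}.
\]

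Next I would verify the three defining clauses of $\zfrak'(\Jwf)$ for $\Awf'$. Clause (i): each $\varphi^{-1}[A]$ lies in $\Jwf^+$ by the equivalence above. Clause (ii): for any finite $\Awf_0\subseteq\Awf$ one has
\[
\omega\setminus\bigcup_{A\in\Awf_0}\varphi^{-1}[A]=\varphi^{-1}\bigl[\omega\setminus\bigcup\Awf_0\bigr],
\]
and since $\Awf$ has $\Kwf$-f.u.p.\ the set $\omega\setminus\bigcup\Awf_0$ belongs to $\Kwf^+$, hence its preimage is in $\Jwf^+$, giving $\Jwf$-f.u.p.\ of $\Awf'$. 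Clause (iii): given $B\in\Jwf^+$, consider $\varphi[B]$. Because $B\subseteq\varphi^{-1}[\varphi[B]]$ and $B\notin\Jwf$, we obtain $\varphi^{-1}[\varphi[B]]\notin\Jwf$, i.e.\ $\varphi[B]\in\Kwf^+$. By the witnessing property of $\Awf$ there is $A\in\Awf$ with $A\cap\varphi[B]$ infinite, and for each $n\in A\cap\varphi[B]$ one can pick $b_n\in B$ with $\varphi(b_n)=n$; the $b_n$'s are pairwise distinct elements of $\varphi^{-1}[A]\cap B$, so $\abs{\varphi^{-1}[A]\cap B}\geq\omega$.

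Thus $\Awf'$ witnesses $\zfrak'(\Jwf)\leq\abs{\Awf'}\leq\abs{\Awf}=\zfrak'(\Kwf)$, as required. A side remark: the argument also shows that if $\zfrak'(\Kwf)$ is defined (so $\Kwf$ is non-maximal by \autoref{zfrak}\ref{zfraka}) then so is $\zfrak'(\Jwf)$, because a splitting of $\omega$ witnessing non-maximality of $\Kwf$ pulls back through $\varphi^{-1}$ to one for $\Jwf$.

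The only mildly delicate point is clause (iii): the naive reflex is to intersect $A\in\Awf$ directly with $B$, but $B$ lives in the domain of $\varphi$ while $\Awf$ lives in the codomain, so the correct move is to project $B$ to $\varphi[B]\in\Kwf^+$, apply the hypothesis there, and then use the surjection from $\varphi^{-1}[A]\cap B$ onto $A\cap\varphi[B]$ (which is injective in the reverse direction by choice of preimages) to transfer infiniteness back. The finite-to-one assumption is not needed for this step; it is only used implicitly through the definition of $\leq_\RB$. No further cardinal arithmetic is required.
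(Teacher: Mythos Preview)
Your proof is correct and follows essentially the same route as the paper: both arguments transport a family $\Awf\subseteq\Kwf^+$ to $\{\varphi^{-1}[A]:A\in\Awf\}\subseteq\Jwf^+$, check $\Jwf$-f.u.p.\ via $\omega\setminus\bigcup\varphi^{-1}[\Awf_0]=\varphi^{-1}[\omega\setminus\bigcup\Awf_0]$, and handle the intersection clause by passing from $B\in\Jwf^+$ to $\varphi[B]\in\Kwf^+$. The only cosmetic difference is that the paper argues contrapositively (any $\Awf$ with $|\Awf|<\zfrak'(\Jwf)$ fails to witness $\zfrak'(\Kwf)$), while you argue directly by pulling back an optimal witness; your observation that the finite-to-one hypothesis is not actually used is also correct.
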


\begin{proof}
Let $\varphi\colon\omega\to\omega$ be a~finite-to-one function and
$\Kwf=\set{A\subseteq\omega}{\varphi^{-1}[A]\in\Jwf}$.
Assume that $\Awf\subseteq\Kwf^+$ has $\Kwf$-f.u.p.\ and
$\abs{\Awf}<\zfrak'(\Jwf)$.
Let $\Awf_\varphi=\set{\varphi^{-1}[A]}{A\in\Awf}$.
Then $\Awf_\varphi\subseteq\Jwf^+$ and
$\Awf_\varphi$~has $\Jwf$-f.u.p.\ because for every
finite set $\Awf_0\subseteq\Awf$,
$\omega\setminus\bigcup\Awf_0\notin\Kwf$ and therefore
$\omega\setminus\bigcup_{A\in\Awf_0}\varphi^{-1}[A]=$
$\varphi^{-1}[\omega\setminus\bigcup\Awf_0]\notin\Jwf$.
By the cardinality assumption there is $B\in\Jwf^+$ such that
for every $A\in\Awf$,
$\varphi^{-1}[A]\cap B$ is finite.
Let
$C=\set{n\in\omega}{\varphi^{-1}[\{n\}]\cap B\ne\emptyset}$.
Then $C\in\Kwf^+$ because $\varphi^{-1}[C]\supseteq B\in\Jwf^+$.
For every $A\in\Awf$, $A\cap C$ is finite because
$\set{n\in A}{\varphi^{-1}[\{n\}]\cap B\ne\emptyset}$ is finite.
Thus, $\Awf$ cannot be a witness for $\zfrak'(\Kwf)$.
\end{proof}

Since $\fin\leq_\RB \Jwf$ for each meager ideal $\Jwf$ (see \cite{talagrand,bartjud}), we get the following consequence of \autoref{RBorder}.

\begin{corollary}
If $\Jwf$ is meager then $\zfrak'(\Jwf)\leq\pfrak$.
\qed
\end{corollary}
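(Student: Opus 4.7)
The proof is essentially a one-line application of the preceding Lemma together with two facts already recorded in the excerpt, so the plan is short.

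First, I would invoke Talagrand's characterization of meager ideals on $\omega$ (the reference \cite{talagrand,bartjud} just cited): every meager ideal $\Jwf$ on $\omega$ admits a finite-to-one function $\varphi\colon\omega\to\omega$ such that $\fin=\set{A\subseteq\omega}{\varphi^{-1}[A]\in\Jwf}$, i.e.\ $\fin\leq_\RB\Jwf$. This is exactly the hypothesis of \autoref{RBorder} with $\Kwf=\fin$.

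Next, I would apply \autoref{RBorder} to $\Kwf=\fin$ and the given meager $\Jwf$, which yields immediately
\[
\zfrak'(\Jwf)\leq\zfrak'(\fin).
\]
Finally, I would recall the equality $\zfrak'(\fin)=\zfrak(\fin)=\pfrak$, which was remarked right after the definition of $\zfrak$ and $\zfrak'$ in the paragraph preceding \autoref{zfrak}. Chaining these two inequalities gives $\zfrak'(\Jwf)\leq\pfrak$, completing the proof.

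There is no real obstacle here: the whole content of the corollary is packaged inside \autoref{RBorder} and the classical Talagrand fact. The only point that requires a sentence of care is that one must make sure $\zfrak'(\Jwf)$ is defined in the first place, which follows from \autoref{zfrak}\ref{zfraka} since a meager ideal on $\omega$ is not maximal (a maximal ideal on $\omega$ is never meager, being either the complement of a meager filter or equivalently not having the Baire property).
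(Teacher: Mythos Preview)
Your proposal is correct and follows exactly the paper's own argument: the corollary is stated as an immediate consequence of \autoref{RBorder} together with the Talagrand fact that $\fin\leq_\RB\Jwf$ for every meager ideal~$\Jwf$, and then $\zfrak'(\fin)=\pfrak$. Your extra remark checking that $\zfrak'(\Jwf)$ is defined (via \autoref{zfrak}\ref{zfraka}, since a meager ideal is not maximal) is a harmless addition that the paper leaves implicit.
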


Note that an ideal on~$\omega$ is meager (as a~subset of $\Pwf(\omega)$) if and only if it has the Baire property.
Therefore, $\zfrak'(\Jwf)\leq\pfrak$ for every analytic ideal~$\Jwf$ on~$\omega$.
Of course, there are also non-analytic ideals satisfying this inequality.
For example, an ideal generated by an almost disjoint family is meager because it can be extended to
a~Borel ideal of the form
$\Jwf'=\set{A\subseteq\omega}
{\exists m\in\omega\allowbreak\ A\subseteq
\bigcup_{n<m}A_n}\cup\set{A\subseteq\omega}
{\forall n\in\omega\allowbreak\ |A\cap A_n|<\omega}$,
where $\set{A_n}{n\in\omega}$ is a~partition of~$\omega$ into infinite sets.

For every maximal ideal $\Iwf$ and $\Jwf=\fin$,
$\cofJ{\fin}{\Iwf}=\cof{\Iwf}\geq\ufrak$, where $\ufrak$~is the ultrafilter number.
In general, if $\Jwf\neq\fin$ and $\Iwf\supseteq\Jwf$ is maximal, we know much less about the lower bound of $\cofJ{\Jwf}{\Iwf}$.

An ideal $\Jwf$ on~$X$ is said to be \textit{nowhere maximal},
if $\Jwf{\restriction}A$ is not a~maximal ideal on~$A$ for every $A\in\Jwf^+$
(equivalently, if there is no maximal ideal $\Iwf$ on~$X$ such that
$\Iwf\rsub\Jwf$).

\begin{prop}
Let $\Jwf$ be a~nowhere maximal ideal on~$\omega$.
For every maximal ideal $\Iwf\supseteq\Jwf$,
$\cofJ{\Jwf}{\Iwf}\geq\max\{\omega_1,\zfrak(\Jwf)\}$.
\end{prop}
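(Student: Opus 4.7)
The plan is to establish the two lower bounds, $\omega_1$ and $\zfrak(\Jwf)$, separately.

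For the $\omega_1$ bound, I would invoke \autoref{maxB}\ref{maxBb}. The key intermediate step is that nowhere maximality of $\Jwf$ forces $\Iwf\nrsub\Jwf$: otherwise some $E\in\Iwf^*$ would satisfy $\Iwf{\restriction}E\subseteq\Jwf$, and since $\Iwf{\restriction}E$ is maximal on~$E$, equality $\Iwf{\restriction}E=\Jwf{\restriction}E$ would make the latter maximal; but $E\in\Iwf^*\subseteq\Jwf^+$ (since $\Jwf\subseteq\Iwf$ is proper), contradicting nowhere maximality. The equivalence \ref{maxBb2}${}\Leftrightarrow{}$\ref{maxBb3} of \autoref{maxB}\ref{maxBb} then yields $\cofJ{\Jwf}{\Iwf}>\omega$, and therefore $\geq\omega_1$.

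For the $\zfrak(\Jwf)$ bound, I would first observe that nowhere maximality of $\Jwf$ is equivalent to the \emph{splitting property}: every $B\in\Jwf^+$ decomposes as $B=B_0\sqcup B_1$ with $B_0,B_1\in\Jwf^+$. Fix a $\subseteq^\Jwf$-cofinal family $\Cwf\subseteq\Iwf$ of size $\kappa=\cofJ{\Jwf}{\Iwf}$. By replacing each $C\in\Cwf$ by $C\cup D$ for a fixed $D\in\Iwf\setminus\Jwf$ (such $D$ exists as $\Jwf\subsetneq\Iwf$), we may arrange $\Cwf\subseteq\Iwf\cap\Jwf^+$. Since $\kappa\geq\omega_1>1$, $\Cwf$ has no $\subseteq^\Jwf$-maximum element: such a maximum would reduce $\Cwf$ to a singleton cofinal family. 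Next, for each $C\in\Cwf$ the splitting property supplies disjoint $C^0,C^1\subseteq C$ in~$\Jwf^+$ with $C^0\cup C^1=C$. The candidate $\zfrak$-witness is $\Awf=\{C^0,C^1:C\in\Cwf\}\subseteq\Iwf\cap\Jwf^+$, of cardinality $\leq\kappa$. The $\Jwf$-f.u.p.\ of $\Awf$ is automatic: for a finite $\Awf_0\subseteq\Awf$, $\bigcup\Awf_0\in\Iwf$, hence $\omega\setminus\bigcup\Awf_0\in\Iwf^*\subseteq\Jwf^+$.

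The main obstacle will be verifying the density clause in the definition of $\zfrak(\Jwf)$: for each $B\in\Jwf^+$ produce $A\in\Awf$ with $A\cap B\in\Jwf^+$. Maximality of $\Iwf$ splits $\Jwf^+$ into two cases. When $B\in\Iwf\cap\Jwf^+$, cofinality gives $C\in\Cwf$ with $B\subseteq^\Jwf C$, so $B\cap C$ differs from~$B$ by a $\Jwf$-set and hence lies in $\Jwf^+$; the decomposition $B\cap C=(B\cap C^0)\cup(B\cap C^1)$ then produces a suitable~$C^i$. The harder case is $B\in\Iwf^*$, where the no-maximum property becomes crucial: if every $C\in\Cwf$ had $B\cap C\in\Jwf$, then applying cofinality to $\omega\setminus B\in\Iwf$ would give $C^*\in\Cwf$ with $\omega\setminus(B\cup C^*)\in\Jwf$, which together with $B\cap C^*\in\Jwf$ forces $B\triangle(\omega\setminus C^*)\in\Jwf$; then for every $C\in\Cwf$, $B\cap C$ would be $\Jwf$-equivalent to $C\setminus C^*$, yielding $C\subseteq^\Jwf C^*$ and contradicting that $\Cwf$ has no $\subseteq^\Jwf$-maximum. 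Hence some $C$ does meet $B$ $\Jwf$-positively, and splitting as before supplies the required~$C^i$, completing $\zfrak(\Jwf)\leq\kappa$.
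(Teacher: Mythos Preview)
Your proof is correct, but it takes a genuinely different route from the paper's. The paper argues the $\zfrak(\Jwf)$ bound in the contrapositive direction: given any $\Awf\subseteq\Iwf$ with $\abs{\Awf}<\zfrak(\Jwf)$, it notes that $\Awf$ has $\Jwf$-f.u.p.\ (since $\Awf\cup\Jwf\subseteq\Iwf$), so by definition of $\zfrak(\Jwf)$ there is $B\in\Jwf^+$ with $A\cap B\in\Jwf$ for all $A\in\Awf$; nowhere maximality then splits $B=B_1\cup B_2$ with both pieces in $\Jwf^+$, and maximality of $\Iwf$ puts some $B_i$ into $\Iwf$, whence $B_i\setminus A\notin\Jwf$ for all $A\in\Awf$, so $\Awf$ is not $\subseteq^\Jwf$-cofinal. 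This is shorter and avoids case analysis entirely. Your approach is constructive in the opposite direction: from a minimal cofinal family you manufacture an explicit $\zfrak$-witness by splitting each member. This costs you the $B\in\Iwf^*$ case, which you handle correctly via the no-maximum argument, but the payoff is an explicit description of a $\zfrak(\Jwf)$-witness sitting inside~$\Iwf$. Both arguments use the same two ingredients---the splitting property of $\Jwf$ and maximality of $\Iwf$---but the paper invokes each exactly once, whereas your construction needs them parametrically (splitting every $C\in\Cwf$, and invoking maximality for every $B\in\Jwf^+$).
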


\begin{proof}
$\zfrak(\Jwf)$ is defined and $\zfrak(\Jwf)\geq\omega$ by \autoref{zfrak}\ref{zfrakb}--\ref{zfrakc}.
Let $\Iwf\supseteq\Jwf$ be a~maximal ideal and let
$\Awf\subseteq\Iwf$ be arbitrary set of
cardinality~$<\zfrak(\Jwf)$.
Since $\Awf$~has $\Jwf$-f.u.p., there is $B\in\Jwf^+$
such that $\forall A\in\Awf$ $A\cap B\in\Jwf$.
Since $\Jwf{\restriction}B$ is not a~maximal ideal, there
are disjoint sets $B_1,B_2\in\Jwf^+$ such that $B=B_1\cup B_2$.
By maximality of $\Iwf$, some $B_i\in\Iwf$.
Then for every $A\in\Awf$, $A\cap B_i\in\Jwf$ and hence,
$B_i\setminus A\notin\Jwf$.
This finishes the proof of $\cofJ{\Jwf}{\Iwf}\geq\zfrak(\Jwf)$.
By \autoref{maxB}\ref{maxBb}, $\cofJ{\Jwf}{\Iwf}\geq\omega_1$.
\end{proof}

An ideal $\Iwf$ is a $\mathrm{P}_\kappa(\Jwf)$-ideal iff for any $\Awf\subseteq\Iwf$ of cardinality~$<\kappa$ there is $I\in \Iwf$ such that for every $A\in\Awf$, $A\subseteq^\Jwf I$.
An ideal is a~$\mathrm{P}_\kappa$-ideal, if it is a~$\mathrm{P}_\kappa(\fin)$-ideal.

\begin{prop}\label{Pkappa}
Let $\Jwf\subseteq\Iwf$ be ideals.
\begin{enumerate}[label=\normalfont(\alph*)]
\item\label{i}
If $\Iwf$ is a~$P_\kappa(\Jwf)$-ideal, then
either $\cofJ{\Jwf}{\Iwf}=1$ or\/ $\cofJ{\Jwf}{\Iwf}\geq\kappa$.
\item\label{ii}
If $\Iwf$ is a~$\mathrm{P}_\kappa$-ideal, then for every ideal $\Jwf\subseteq\Iwf$,
either $\cofJ{\Jwf}{\Iwf}=1$ or\/ $\cofJ{\Jwf}{\Iwf}\geq\kappa$.
\end{enumerate}
\end{prop}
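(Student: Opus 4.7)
The plan is to show part (a) directly by contradiction, and then derive part (b) as an immediate corollary via the observation that a $\mathrm{P}_\kappa$-ideal is automatically a $\mathrm{P}_\kappa(\Jwf)$-ideal whenever $\fin\subseteq\Jwf$.

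For part (a), suppose $\Iwf$ is a $\mathrm{P}_\kappa(\Jwf)$-ideal and $1<\cofJ{\Jwf}{\Iwf}<\kappa$, and aim to derive a contradiction. Let $\Awf\subseteq\Iwf$ be a family with $|\Awf|<\kappa$ that is cofinal in $(\Iwf,{\subseteq^\Jwf})$, i.e., witnesses the value of $\cofJ{\Jwf}{\Iwf}$. Since $|\Awf|<\kappa$, the $\mathrm{P}_\kappa(\Jwf)$-property supplies a single set $I^*\in\Iwf$ such that $A\subseteq^\Jwf I^*$ for every $A\in\Awf$.

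The key step is to show that $\{I^*\}$ is then a $1$-element witness for $\cofJ{\Jwf}{\Iwf}$, which contradicts the assumption $\cofJ{\Jwf}{\Iwf}>1$. Given an arbitrary $I\in\Iwf$, the cofinality of $\Awf$ yields some $A\in\Awf$ with $I\setminus A\in\Jwf$, while the choice of $I^*$ gives $A\setminus I^*\in\Jwf$. Combining via the inclusion
\[
I\setminus I^*\subseteq(I\setminus A)\cup(A\setminus I^*),
\]
and using that $\Jwf$ is closed under finite unions and subsets, we get $I\setminus I^*\in\Jwf$, that is $I\subseteq^\Jwf I^*$. Hence $\{I^*\}$ witnesses $\cofJ{\Jwf}{\Iwf}=1$, contradiction. (This step can also be phrased cleanly via \autoref{orthimpliescofJ1}\ref{orthimpliescofJ1a}: $I^*$ shows $\Iwf\rsub\Jwf$.)

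For part (b), the assumption $\Jwf\subseteq\Iwf$ in the statement is compatible with $\fin\subseteq\Jwf$ (which holds by the definition of an ideal). Thus any $\mathrm{P}_\kappa=\mathrm{P}_\kappa(\fin)$-ideal is automatically a $\mathrm{P}_\kappa(\Jwf)$-ideal, because ${\subseteq^*}\subseteq{\subseteq^\Jwf}$, and the claim reduces to part (a). There is no real obstacle here; the only care needed is in the bookkeeping of the definitions of $\subseteq^\Jwf$ and the $\mathrm{P}_\kappa(\Jwf)$-property, and the short combinatorial inclusion above is the entire mathematical content.
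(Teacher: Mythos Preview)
Your proposal is correct and follows essentially the same approach as the paper: both apply the $\mathrm{P}_\kappa(\Jwf)$-property to an arbitrary family $\Awf\subseteq\Iwf$ of size $<\kappa$ to obtain a single $\subseteq^\Jwf$-upper bound, and then use transitivity of $\subseteq^\Jwf$ together with $\cofJ{\Jwf}{\Iwf}>1$. The only cosmetic difference is that the paper phrases it contrapositively (exhibiting some $B\in\Iwf$ with $B\nsubseteq^\Jwf A$ for all $A\in\Awf$) rather than as ``$\{I^*\}$ would witness $\cofJ{\Jwf}{\Iwf}=1$,'' but the underlying argument is identical.
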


\begin{proof}
\ref{ii} is a~consequence of \ref{i} because
a~$\mathrm{P}_\kappa$-ideal~$\Iwf$ is a~$\mathrm{P}_\kappa(\Jwf)$-ideal for every
ideal $\Jwf\subseteq\Iwf$.
We prove \ref{i}.
Let $\Awf\subseteq\Iwf$ be arbitrary set of cardinality~$<\kappa$.
Since $\Iwf$~is a~$P_\kappa(\Jwf)$-ideal there is a~set
$A_0\in\Iwf$ such that $A\setminus A_0\in\Jwf$ for all
$A\in\Awf$.
Assume that $\cofJ{\Jwf}{\Iwf}>1$.
Then there is $B\in\Iwf$ such that $B\setminus A_0\notin\Jwf$ and
then $B\setminus A\notin\Jwf$ for all $A\in\Awf$.
This proves that $\cofJ{\Jwf}{\Iwf}\geq\kappa$.
\end{proof}

\begin{example}
By \cite{blassshelah} there is a~model
with $\dfrak=\sfrak=\cfrak=\aleph_2$ and $\bfrak=\aleph_1$
in which there is a~maximal ideal~$\Iwf$
having a~base of order type~$\omega_2$ with respect to~$\subseteq^*$.
It follows by \autoref{Pkappa} that for every ideal $\Jwf\subseteq\Iwf$, either
$\cofJ{\Jwf}{\Iwf}=1$ or $\cofJ{\Jwf}{\Iwf}=\omega_2$.
\qed
\end{example}

\section{Open problems}

Since in this article we only resolved the dichotomous division into trivial and non-trivial cases of the relative cofinality of ideals on $\unit$, there are a lot of questions regarding the studied ideals, e.g., the following.

\begin{question}
For which pairs of ideals $\Iwf$, $\Jwf$ among
$\Nwf$, $\Mwf$, $\Ewf$, $\SN$, $\nwd$ we have consistently\/
$\cofJ{\Jwf}{\Iwf}<\cof{\Iwf}$?
\end{question}

For all ideals, $\add{\Iwf}=\cof{\Iwf}$ implies
$\cofJ{\Jwf}{\Iwf}=\cof{\Iwf}$ whenever $\cofJ{\Jwf}{\Iwf}$
is infinite.
We ask the following question:

\begin{question}
For which pairs of ideals $\Iwf$, $\Jwf$ among
$\Nwf$, $\Mwf$, $\Ewf$, $\SN$, $\nwd$ we have consistently\/
$\add{\Iwf}<\cof{\Iwf}$ and $\cofJ{\Jwf}{\Iwf}=\cof{\Iwf}$?
\end{question}

The answer to the previous question when the inequality $\add{\Iwf}<\cof{\Iwf}$ is not assumed is trivial -- just consider Continuum Hypothesis or any model with $\add{\Iwf}=\cof{\Iwf}$.

\begin{question}
By \autoref{maxB}\ref{maxBb}, if $\Iwf$ is a~maximal ideal on~$\omega$ and $\Iwf\nrsub\Jwf$, then
$\cofJ{\Jwf}{\Iwf}\geq\omega_1$.
(a)
Is there a~maximal ideal~$\Iwf$ on~$\omega$ with $\cof{\Iwf}=\cfrak$
and $\cofJ{\Jwf}{\Iwf}=\omega_1$ for some ideal $\Jwf\subseteq\Iwf$?
(b)
Is there a~sequence $\seq{\Jwf_\alpha}{\alpha<\omega}$ of ideals
on~$\omega$ strictly increasing with respect to the inclusion such
that $\bigcup_{\alpha<\omega_1}\Jwf_\alpha$ is a~maximal ideal?
Note that an affirmative answer to~(a) means a~positive answer
to~(b).
\end{question}

\begin{question}
Characterize the sets of cardinals
$\Cof(\Iwf)=\set{\cofJ{\Jwf}{\Iwf}}{\Jwf\subseteq\Iwf}$
for maximal ideals~$\Iwf$ on~$\omega$ with $\cof{\Iwf}=\cfrak$.
\end{question}

{
\bibliography{left}
\bibliographystyle{alpha}
}


\end{document}